\definecolor{ForestGreen}{RGB}{12, 110, 46}
\definecolor{ForestGreenTwo}{RGB}{120, 110, 86}
\newtheorem{theorem}{Theorem}[section]
\newtheorem{proposition}[theorem]{Proposition}
\newtheorem{lemma}[theorem]{Lemma}
\newtheorem{corollary}[theorem]{Corollary}
\newtheorem{remark}[theorem]{Remark}
\newcommand{\G}{\Gamma}
\newcommand{\Qed}{\rule{2.5mm}{3mm}}
\newenvironment{proof}{{\noindent \sc Proof.}}{\hfill $\Qed$}
\def\ZZ{{\hbox{\sf Z\kern-.43emZ}}}
\title{ON CERTAIN REGULAR NICELY DISTANCE--BALANCED GRAPHS}
\author{
{Blas Fern\'andez}\\
{\small UP IAM}\\
{\small University of Primorska}\\
{\small Muzejski trg 2, 6000 Koper, Slovenia }\\
{\small blas.fernandez@famnit.upr.si} \and
{\v Stefko Miklavi\v c}\\
{\small UP IAM and UP FAMNIT }\\
{\small University of Primorska}\\
{\small Muzejski trg 2, 6000 Koper, Slovenia }\\
{\small stefko.miklavic@upr.si} \and
{Safet Penji\'c}\\
{\small UP IAM and UP FAMNIT }\\
{\small University of Primorska}\\
{\small Muzejski trg 2, 6000 Koper, Slovenia }\\
{\small safet.penjic@upr.si}
}
\begin{document}
{\small
\maketitle
}

\begin{abstract}
A connected graph $\G$ is called {\em nicely distance--balanced}, whenever there exists a positive integer $\gamma=\gamma(\G)$, such that for any two adjacent vertices $u,v$ of $\G$ there are exactly $\gamma$ vertices of $\G$ which are closer to $u$ than to $v$, and exactly $\gamma$ vertices of $\G$ which are closer to $v$ than to $u$. Let $d$ denote the diameter of $\G$. It is known that $d \le \gamma$, and that nicely distance-balanced graphs with $\gamma = d$ are precisely complete graphs and cycles of length $2d$ or $2d+1$. In this paper we classify regular nicely distance-balanced graphs with $\gamma=d+1$.   
\end{abstract}

\noindent{\em Mathematics Subject Classifications: 
05C12;
05C75.}

\noindent{\em Keywords: regular graph; distance-balanced graph; nicely distance-balanced graph.} 


\section{Introduction}
\label{sec:intro}

\noindent
Let $\G$ be a finite, undirected, connected graph with diameter $d$, and let  $V(\G)$ and $E(\G)$ denote the vertex set and the edge set of $\G$, respectively.
For $u,v \in V(\G)$, let $\G(u)$ be the set of neighbours of $u$, and let $d(u,v) = d_{\G}(u,v)$ denote the minimal path-length distance between $u$ and $v$. 
For a pair of adjacent vertices $u,v$ of $\G$ we denote 
$$
W_{u,v} = \{x\in V(\G)\mid d(x,u)<d(x,v)\}.
$$
We say that $\G$ is {\em distance--balanced} (DB for short) whenever
for an arbitrary pair of adjacent vertices $u$ and $v$ of $\G$ we have that
$$
|W_{u,v}| = |W_{v,u}|.
$$

The investigation of distance-balanced graphs was initiated in 1999 by Handa~\cite{Ha}, although the name {\em distance-balanced} was coined nine years later by Jerebic, Klav\v zar and Rall~\cite{JKR}. The family of distance-balanced graphs is very rich and its study is interesting from various purely graph-theoretic aspects where one focuses on particular properties of such graphs such as symmetry~\cite{KMMM05}, connectivity~\cite{Ha, MS} or complexity aspects of algorithms related to such graphs~\cite{CL}. However, the balancedness property of these graphs makes them very appealing also in areas such as mathematical chemistry and communication networks. For instance, the investigation of such graphs is highly related to the well-studied Wiener index and Szeged index (see ~\cite{BBCKVZ14, IKM, JKR}) and they present very desirable models in various real-life situations related to (communication) networks~\cite{BBCKVZ14}. It turns out that these graphs can be characterized by properties that at first glance do not seem to have much in common with the original definition from~\cite{JKR}. For example, in~\cite{BCPSSS} it was shown that the distance-balanced graphs coincide with the {\em self-median} graphs, that is graphs for which the sum of the distances from a given vertex to all other vertices is independent of the chosen vertex. Other such examples are  {\em equal opportunity graphs} (see~\cite{BBCKVZ14} for the definition). In~\cite{BBCKVZ14} it is shown that even order distance-balanced graphs are also equal to opportunity graphs.

\medskip
The notion of nicely distance-balanced graphs appears quite naturally in the context of DB graphs. We say that $\G$ is {\em nicely distance--balanced} (NDB for short) whenever there exists a positive integer $\gamma=\gamma(\G)$,
such that for an arbitrary pair of adjacent vertices $u$ and $v$ of $\G$
$$
  |W_{u,v}| = |W_{v,u}| = \gamma
$$
holds. Clearly, every NDB graph is also DB, but the opposite is not necessarily true. For example, if $n \ge 3$ is an odd positive integer, then the prism graph on $2n$ vertices is DB, but not NDB. 

Assume now that $\G$ is NDB. Let us denote the diameter of $\G$ by $d$.  In \cite{KM}, where these graphs were first defined, it was proved that $d \le \gamma$ and NDB graphs with $d=\gamma$ were classified. It turns out that $\G$ is NDB with $d=\gamma$  if and only if $\G$ is either isomorphic to a complete graph on $n \ge 2$ vertices,  or to a cycle on $2d$ or $2d+1$ vertices. In this paper we study NDB graphs for which $\gamma=d+1$. The situation in this case is much more complex than in the case $\gamma=d$. Therefore, we will concentrate our study to the class of regular graphs (recall that $\G$ is said to be regular with valency $k$  if $|\G(u)|=k$ for every $u \in V(\G)$).  Our main result is the following theorem.

\begin{theorem}
	\label{thm:main}
	Let $\G$ be a regular NDB graph with valency $k$ and diameter $d$. Then $\gamma = d+1$ if and only if $\G$ is isomorphic to one of the following graphs:
\begin{enumerate}
\item the Petersen graph (with $k=3$ and $d=2$); 
\item the complement of the Petersen graph (with $k=6$ and $d=2$); 
\item the complete multipartite graph $K_{t \times 3}$ with $t$ parts of cardinality $3$, $t \ge 2$ (with $k=3(t-1)$ and $d=2$); 
\item the M\"obius ladder graph on eight vertices (with $k=3$ and $d=2$); 
\item the Paley graph on 9 vertices (with $k=4$ and $d=2$); 
\item the $3$-dimensional hypercube $Q_3$ (with $k=3$ and $d=3$);
\item the line graph of the $3$-dimensional hypercube $Q_3$ (with $k=4$ and $d=3$); 
\item the icosahedron (with $k=5$ and $d=3$). 
\end{enumerate}
\end{theorem}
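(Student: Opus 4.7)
The plan is to extract from the hypothesis $\gamma = d+1$ very rigid constraints on the distance partition around each edge of $\G$, combine these with $k$-regularity to bound the diameter (and the valency), and finish with a direct check on the finitely many candidates.

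For adjacent $u,v \in V(\G)$ put $D_i^j(u,v) = \{x\in V(\G): d(x,u) = i,\, d(x,v) = j\}$, so that $W_{u,v} = \bigsqcup_{i=0}^{d-1} D_i^{i+1}(u,v)$. A shortest-path extension argument (the one behind the lower bound $\gamma\ge d$ in \cite{KM}) shows $|D_i^{i+1}(u,v)| \ge 1$ for every $0 \le i \le d-1$. Since $\sum_{i=0}^{d-1}|D_i^{i+1}(u,v)| = \gamma = d+1$, exactly one index $j = j(u,v) \in \{1,\ldots,d-1\}$ satisfies $|D_j^{j+1}(u,v)| = 2$ while the remaining summands equal $1$; the exclusion $j \neq 0$ is forced by $D_0^1(u,v) = \{u\}$. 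Inside $W_{u,v}$ the geodesic tree rooted at $u$ is thus a path with a single branching at level $j(u,v)$.

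From $\G(u) = \{v\} \sqcup D_1^1(u,v) \sqcup D_1^2(u,v)$ and $k$-regularity, $|D_1^1(u,v)| \in \{k-2, k-3\}$, taking the smaller value precisely when $j(u,v) = 1$. Propagating this kind of local identity into farther layers and feeding it back through global double-counts over edges pins down the admissible parameter tuples. For $d=2$ the value $\gamma = 3$ forces $j(u,v) = 1$ on every edge, so $\G$ is edge-regular with parameters $(n, k, k-3)$; combined with the fact that each edge has exactly two second-subconstituent vertices on each side, the classification reduces to a small finite search producing items (1)--(5): the Petersen graph, its complement, the graphs $K_{t \times 3}$, the M\"obius ladder $M_8$, and the Paley graph on $9$ vertices, each of which is directly verified to be NDB with $\gamma = 3$. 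For $d \ge 3$ one propagates the defect index $j(u,v)$ along a geodesic $u = x_0, x_1, \ldots, x_d$, uses the local equations at each $x_i$ together with global $|D_i^j|$-counts, and forces $d = 3$ with $(n, k) \in \{(8,3), (24,4), (12,5)\}$, realised respectively by $Q_3$, $L(Q_3)$, and the icosahedron, items (6)--(8).

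The main obstacle is the bound $d \le 3$: several a priori behaviours of the defect index along a long geodesic are locally consistent, and eliminating them requires feeding multiple global double-counts back into the local recursions simultaneously. A secondary, less severe obstacle is the non-strongly-regular subcase at $d=2$ (the M\"obius ladder $M_8$), where no classical SRG classification applies and one must work directly from the edge-level intersection numbers forced by the NDB condition.
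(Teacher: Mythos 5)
Your overall strategy matches the paper's (distance partition $D^i_j$ around an edge, the observation that for a diametral edge the sets $D^{i-1}_i$ are all nonempty and exactly one has size two, reduction to small valency and diameter, then identification of the graphs), but as written the proposal is a plan rather than a proof, and the parts you defer are precisely where all the work lies. Two specific problems. First, your claim that $|D^{i+1}_i(u,v)|\ge 1$ for \emph{every} edge $uv$ and every $i$ is not justified: the shortest-path argument only guarantees this when $uv$ lies on a geodesic realizing the diameter from the right endpoint; for an arbitrary edge the top levels $D^{d}_{d-1}(u,v)$ can a priori be empty, in which case the excess $\gamma-(\hbox{number of nonempty levels})$ need not be concentrated in a single level. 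The paper has to prove nonemptiness separately (via regularity, for $d=3$) in Propositions~\ref{ecc3} and~\ref{nonempty}, and your subsequent ``exactly one level has size two'' structure for all edges inherits this gap. Second, the sentence ``one propagates the defect index along a geodesic \dots and forces $d=3$'' compresses into an assertion what is in fact the bulk of the paper: the bounds $k\in\{3,4,5\}$ and $d\le 5$ (Theorems~\ref{tval} and~\ref{thm:diameter}) and then the elimination of $d=5$ and $d=4$ for $k=3$, of $d=4$ for $k=4$, and the identification of $Q_3$, $L(Q_3)$ and the icosahedron each require long, delicate, edge-by-edge balancing arguments (Sections~\ref{sec:k=3}--\ref{sec:k=5}), including appeals to external enumerations (cubic graphs on $10$ vertices, edge-regular graphs on $12$ vertices with $\lambda=1$, and the local characterization of the icosahedron). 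You explicitly flag this as ``the main obstacle'' but do not resolve it, so the proof is not complete.

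Two smaller points: your $d=2$ case asserts a ``small finite search'' yields items (1)--(5), but the list contains the infinite family $K_{t\times 3}$, so no finite search suffices; the paper instead cites the existing classification of NDB graphs with $d=2$ and $\gamma=3$ from the literature, which you could do as well. Also, the parameter tuple $(24,4)$ for the line graph of $Q_3$ is wrong: $Q_3$ has $12$ edges, so $L(Q_3)$ has $12$ vertices, not $24$.
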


Our paper is organized as follows. After some preliminaries in Section \ref{sec:prelim} we prove certain structural results about NDB graphs with $\gamma=d+1$ in Section \ref{sec:struc}. In Section \ref{sec:regular} we show that if $\G$ is a regular NDB graph with $\gamma=d+1$, then $d \le 5$ and  the valency of $\G$ is either $3$, $4$ or $5$. In Sections \ref{sec:k=3}, \ref{sec:k=4} and \ref{sec:k=5} we consider each of these three cases separately. 


\section{Preliminaries}
\label{sec:prelim}

\noindent
In this section we recall some preliminary results that we will find useful later in the paper. Let $\G$ denote a simple, finite, connected graph with vertex set $V(\G)$, edge set $E(\G)$. If $u,v \in V(\G)$ are adjacent then we simply write $u \sim v$ and we denote the corresponding edge by $uv=vu$. For $u\in V(\G)$ and an integer $i$ we let $\G_{i}(u)$ denote the set of vertices of $V(\G)$ that are at distance $i$ from $u$. We abbreviate $\G(u)=\G_1(u)$. We set $\epsilon(u)=\max\{d(u,z) \mid z \in V(\G)\}$ and we call $\epsilon(u)$ the {\em eccentricity} of  $u$. Let  $d = \max \{\epsilon(u) \mid u \in V(\G)\}$ denote the {\em diameter} of $\G$. Pick adjacent vertices $u,v$ of $\G$. For any two non-negative integers $i,j$ we let 
$$
D^i_j(u,v)=\G_i(u)\cap\G_j(v).
$$
By the triangle inequality we observe only the sets $D^{i-1}_i(u,v)$, $D^{i}_i(u,v)$ and $D^{i}_{i-1}(u,v)$ ($1\le i \le d$) can be nonempty. Moreover, the next result holds.

\begin{lemma}
	\label{pr} 
	With the above notation, abbreviate $D^i_j=D^i_j(u,v)$. Then  the following {\rm (i)-(iv)} hold for $1 \le i \le d$.  
	\begin{enumerate}[label={\rm(\roman*)}]
		\item  If $w\in D^{i}_{i-1}$ then $\G(w) \subseteq  D^{i-1}_{i-2} \cup  D^{i-1}_{i-1} \cup  D^{i-1}_{i} \cup  D^{i}_{i-1} \cup  D^{i}_{i} \cup  D^{i+1}_{i}$. 
		\item If $w\in D^{i}_{i}$  then $\G(w) \subseteq   D^{i-1}_{i-1} \cup  D^{i-1}_{i} \cup  D^{i}_{i-1} \cup  D^{i}_{i} \cup  D^{i}_{i+1} \cup  D^{i+1}_{i} \cup  D^{i+1}_{i+1}$.  
		\item  If $w\in D^{i-1}_{i}$ then $\G(w) \subseteq   D^{i-2}_{i-1} \cup  D^{i-1}_{i-1} \cup  D^{i-1}_{i} \cup  D^{i}_{i-1} \cup  D^{i}_{i} \cup  D^{i}_{i+1}$.   
		\item If $D^{i}_{i+1} \ne \emptyset$ ($D^{i+1}_i \ne \emptyset$, respectively) then  $D^{j}_{j+1} \neq \emptyset$ ($D^{j+1}_{j} \neq \emptyset$, respectively) for every $0 \leq j \leq i$. 
	\end{enumerate}   
\end{lemma}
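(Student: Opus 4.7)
The plan is to derive (i)--(iii) as a direct bookkeeping exercise with the triangle inequality, and to handle (iv) by exhibiting a concrete geodesic.

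For (i)--(iii), I would fix $w\in D^a_b$ and an arbitrary neighbor $w'$ of $w$, and apply the triangle inequality to the triples $(u,w,w')$ and $(v,w,w')$, obtaining $d(w',u)\in\{a-1,a,a+1\}$ and $d(w',v)\in\{b-1,b,b+1\}$. A priori this leaves nine possible cells $D^\alpha_\beta$ for $w'$. The further constraint that trims this list down to the one claimed comes from applying the triangle inequality to $w'$ together with the edge $uv$: since $u\sim v$, one has $|d(w',u)-d(w',v)|\le 1$, which forbids every pair $(\alpha,\beta)$ with $|\alpha-\beta|\ge 2$. In case (i), where $a=i$ and $b=i-1$, this eliminates precisely the three pairs $(i,i-2),\ (i+1,i-1),\ (i+1,i-2)$, leaving the six cells listed. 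Cases (ii) and (iii) are handled in exactly the same way.

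For (iv), I would pick $w\in D^i_{i+1}$ and choose a shortest path $u=y_0,y_1,\ldots,y_i=w$, so that $d(y_j,u)=j$ and $d(w,y_j)=i-j$. Applying the triangle inequality twice gives
\[
d(y_j,v)\le d(y_j,u)+d(u,v)=j+1
\qquad\text{and}\qquad
d(y_j,v)\ge d(w,v)-d(w,y_j)=j+1,
\]
so $y_j\in D^j_{j+1}$ for every $0\le j\le i$, establishing the first half of (iv). The statement for $D^{j+1}_j$ follows by interchanging the roles of $u$ and $v$.

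The whole argument is short and essentially routine; no genuine obstacle is anticipated. The only observation worth singling out is the \emph{squeeze} produced by the two triangle inequalities in (iv), which is what forces each intermediate vertex on the geodesic into the single cell $D^j_{j+1}$ rather than into the wider band $D^j_{j-1}\cup D^j_j\cup D^j_{j+1}$ allowed by (i)--(iii).
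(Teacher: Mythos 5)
Your argument is correct and is exactly the routine triangle-inequality bookkeeping that the paper has in mind when it dismisses the proof as ``straightforward'' (with Figure~\ref{01} playing the role of your nine-cell table): parts (i)--(iii) follow from $|d(w',u)-d(w,u)|\le 1$, $|d(w',v)-d(w,v)|\le 1$ and $|d(w',u)-d(w',v)|\le 1$, and part (iv) from the squeeze along a $u$--$w$ geodesic. Nothing is missing; you have simply written out the details the authors chose to omit.
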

\begin{proof}
	Straightforward (see also Figure~\ref{01}).
\end{proof}

{\small\begin{figure}[!ht]{\rm
\begin{center}
\begin{tikzpicture}[scale=.565]

\draw [line width=1pt, draw=ForestGreen] (-2.,-3.)-- (22.,-3.);
\draw [line width=1pt, draw=ForestGreen] (-2.,3.)-- (22.,3.);
\draw [line width=1pt, draw=ForestGreen] (1.,0.)-- (25,0);
\draw [line width=1pt, draw=ForestGreen] (-2,-3)-- (4,3);
\draw [line width=1pt, draw=ForestGreen] (4,-3)-- (10,3);
\draw [line width=1pt, draw=ForestGreen] (10,-3)-- (16,3);
\draw [line width=1pt, draw=ForestGreen] (16,-3)-- (22,3);
\draw [line width=1pt, draw=ForestGreen] (22,-3)-- (25,0);
\draw [line width=1pt, draw=ForestGreen] (-2.,3.)-- (4,-3);
\draw [line width=1pt, draw=ForestGreen] (4,3.)-- (10,-3);
\draw [line width=1pt, draw=ForestGreen] (10,3.)-- (16,-3);
\draw [line width=1pt, draw=ForestGreen] (16,3.)-- (22,-3);
\draw [line width=1pt, draw=ForestGreen] (22,3.)-- (25,0);
\draw [line width=1pt, draw=ForestGreen] (-2,-3)-- (-2,3);
\draw [line width=1pt, draw=ForestGreen] (4,-3)-- (4,3);
\draw [line width=1pt, draw=ForestGreen] (10,-3)-- (10,3);
\draw [line width=1pt, draw=ForestGreen] (16,-3)-- (16,3);
\draw [line width=1pt, draw=ForestGreen] (22,-3)-- (22,3);

\fill (-2.,-3.) circle [radius=0.23];
\fill (-2.,3.) circle [radius=0.23];
\node at (-2.5,3.1) {\normalsize $u$};
\node at (-2.5,-3.1) {\normalsize $v$};

\draw[fill=white, draw=white, line width=0.6pt] (4.,3.) ellipse (1.5cm and .9cm);
\draw[fill=white, draw=white, line width=0.6pt] (7.,0.) ellipse (1.5cm and .9cm);
\draw[fill=white, draw=black, line width=0.6pt] (1.,0.) ellipse (1.5cm and .9cm);
\draw[fill=white, draw=white, line width=0.6pt] (4.,-3.) ellipse (1.5cm and .9cm);
\draw[fill=white, draw=white, line width=0.6pt] (19.,0.) ellipse (1.5cm and .9cm);
\draw[fill=white, draw=black, line width=0.6pt] (22.,3.) ellipse (1.5cm and .9cm);
\draw[fill=white, draw=black, line width=0.6pt] (22.,-3.) ellipse (1.5cm and .9cm);
\draw[fill=white, draw=black, line width=0.6pt] (10.,3.) ellipse (1.5cm and .9cm);
\draw[fill=white, draw=black, line width=0.6pt] (10.,-3.) ellipse (1.5cm and .9cm);
\draw[fill=white, draw=black, line width=0.6pt] (13.,0.) ellipse (1.5cm and .9cm);
\draw[fill=white, draw=white, line width=0.6pt] (16.,3.) ellipse (1.5cm and .9cm);
\draw[fill=white, draw=white, line width=0.6pt] (16.,-3.) ellipse (1.5cm and .9cm);
\draw[fill=white, draw=black, line width=0.6pt] (25.,0.) ellipse (1.5cm and .9cm);

\node at (1,0) {\normalsize $D^{1}_{1}$};
\node at (7,0) {\normalsize $\cdots$};
\node at (13,0) {\normalsize $D^{i}_{i}$};
\node at (19,0) {\normalsize  $\cdots$};
\node at (25,0) {\normalsize $D^{d}_{d}$};

\node at (4,3) {\normalsize $\cdots$};
\node at (10,3) {\normalsize $D^{i-1}_{i}$};
\node at (16,3) {\normalsize  $\cdots$};
\node at (22,3) {\normalsize $D^{d-1}_{d}$};

\node at (4,-3) {\normalsize $\cdots$};
\node at (10,-3) {\normalsize $D^{i}_{i-1}$};
\node at (16,-3) {\normalsize  $\cdots$};
\node at (22,-3) {\normalsize $D^{d}_{d-1}$};
\end{tikzpicture}
\caption{\rm 
Graphical representation of the sets $D^i_j(u,v)$. The line between $D_j^i$ and $D_m^n$ indicates possible edges between vertices of $D^i_j$ and $D_m^n$.
}
\label{01}
\end{center}
}\end{figure}
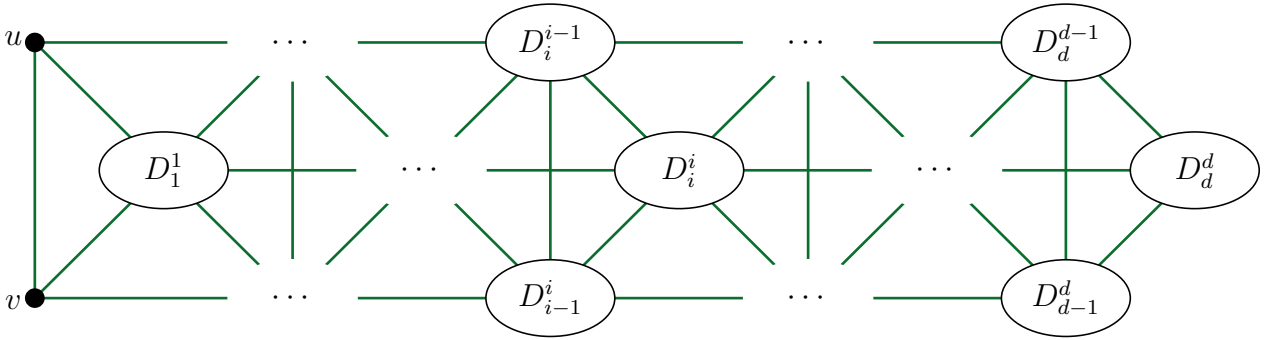}

\noindent
Let us recall the definition of the NDB graphs. For an edge $uv$ of $\G$ we denote 
$$
W_{u,v} = \{x\in V(\G)\mid d(x,u)<d(x,v)\}.
$$
We say that $\G$ is {\em nicely distance--balanced} (NDB for short) whenever there exists a positive integer $\gamma=\gamma(\G)$, such that for any edge $uv$ of $\G$
$$
|W_{u,v}| = |W_{v,u}| = \gamma
$$
holds. One can easily see that $\G$ is NDB if and only if for every edge $uv\in E(\G)$ we have

\begin{equation}
	\label{er}
	\sum_{i=1}^{d}|D^{i}_{i-1}(u,v)|=\sum_{i=1}^{d}|D^{i-1}_i(u,v)|=\gamma.
\end{equation}
Pick adjacent vertices $u,v$ of $\G$. For the purposes of this paper we say that the edge $uv$ is {\em balanced}, if \eqref{er} holds for vertices $u,v$ with $\gamma=d+1$.

Graph $\G$ is said to be {\em regular}, if there exists a non-negative integer $k$, such that $|\G(u)|=k$ for every vertex $u \in V(\G)$. In this case we also say that $\G$ is regular with {\em valency} $k$ (or $k$-regular for short). The following simple observation about regular graphs will be very useful in the rest of the paper.
\begin{lemma}
\label{eq}
Let $\G$ denote a connected regular graph. Then for every edge $uv$ of $\G$ we have 
$$
|D^1_2(u,v)| = |D^2_1(u,v)|.
$$
\end{lemma}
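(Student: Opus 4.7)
My plan is to establish the equality by a straightforward double-counting argument based on partitioning the neighborhoods of $u$ and $v$ according to distance to the other endpoint of the edge $uv$.

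First, I would partition $\Gamma(u)$ by the distance of its elements to $v$. Since $d(u,v)=1$, the triangle inequality forces any $w \in \Gamma(u)$ to satisfy $d(w,v) \in \{0,1,2\}$. The only neighbor of $u$ at distance $0$ from $v$ is $v$ itself; the neighbors of $u$ at distance $1$ from $v$ are exactly the common neighbors of $u$ and $v$, that is, the vertices of $D^1_1(u,v)$; and the neighbors of $u$ at distance $2$ from $v$ are by definition the vertices of $D^1_2(u,v)$. Since $\Gamma$ is $k$-regular, this yields
\begin{equation*}
k \;=\; |\Gamma(u)| \;=\; 1 + |D^1_1(u,v)| + |D^1_2(u,v)|.
\end{equation*}

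Running exactly the same argument with the roles of $u$ and $v$ interchanged (and noting that $D^1_1(u,v) = \Gamma(u) \cap \Gamma(v) = D^1_1(v,u)$ is the same set of common neighbors) gives
\begin{equation*}
k \;=\; |\Gamma(v)| \;=\; 1 + |D^1_1(u,v)| + |D^2_1(u,v)|.
\end{equation*}

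Subtracting the two displayed equations cancels the $1$ and the $|D^1_1(u,v)|$ term and yields the desired equality $|D^1_2(u,v)| = |D^2_1(u,v)|$. There is no real obstacle here; the statement is essentially a one-line consequence of regularity, and the only thing to check carefully is that the three possibilities $d(w,v)\in\{0,1,2\}$ really do cover every neighbor $w$ of $u$, which is immediate from the triangle inequality applied to the edge $uv$.
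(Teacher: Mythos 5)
Your proof is correct and follows essentially the same approach as the paper: both decompose $\G(u)$ as $\{v\} \cup D^1_1(u,v) \cup D^1_2(u,v)$ and $\G(v)$ as $\{u\} \cup D^1_1(u,v) \cup D^2_1(u,v)$ and invoke regularity. Your version merely spells out the triangle-inequality justification that the paper leaves implicit.
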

\begin{proof}
	Note that $\G(u)=\{v\} \cup D^1_1(u,v) \cup D^1_2(u,v)$ and $\G(v)=\{u\} \cup D^1_1(u,v) \cup D^2_1(u,v)$. As $\G$ is regular, the claim follows.
\end{proof}

\bigskip 

Assume $\G$ is regular with valency $k$.  If there exists a non-negative integer $\lambda$, such that  every pair $u,v$ of adjacent vertices of $\G$ has exactly $\lambda$ common neighbours (that is, if $|D^1_1(u,v)|=\lambda$), then we say that $\G$ is {\em edge-regular} (with parameter $\lambda$).  Before we start with our study of regular NDB graphs with $\gamma=d+1$ we have a remark.
 
\begin{remark}
	\label{rem:d=2}
	Let $\G$ denote a regular NDB graph with diameter $d$ and $\gamma=d+1$. Observe first that $d\geq 2$. Moreover, if $d=2$ then it follows from {\rm \cite[Theorem~5.2]{KM}} that $\G$ is one of the following graphs:
	\begin{enumerate}
		\item the Petersen graph,
		\item the complement of the Petersen graph,
		\item the complete multipartite graph $K_{t \times 3}$ with $t$ parts of cardinality $3$ ($t \ge 2$),
		\item the M\"obius ladder graph on eight vertices,
		\item the Paley graph on 9 vertices.
	\end{enumerate}
In what follows we will therefore assume that $d\ge 3$.
\end{remark}
Let $\G$ be a NDB graph with diameter $d \ge 3$ and with  $\gamma = \gamma(\G) = d+1$.  Pick vertices $x_0, x_d$ of $\G$ such that $d(x_0,x_d)=d$, and let $x_0, x_1, \ldots, x_d$ be a shortest path between $x_0$ and $x_d$. Consider the edge $x_0 x_1$ and note that 
$$
  \{x_1, x_2, \ldots, x_d \} \subseteq W_{x_1,x_0}.
$$
It follows that  there is a unique vertex $u \in W_{x_1,x_0} \setminus \{x_1, x_2, \ldots, x_d \}$. Let $\ell=\ell(x_0, x_1) \; (2 \le \ell \le d)$ be such that $u \in D^{\ell-1}_\ell(x_1,x_0)$, and so $D^{\ell-1}_\ell(x_1,x_0) = \{u,x_\ell\}$ and $D^{i-1}_i(x_1,x_0)=\{x_i\}$ for $2 \le i \le d, i \ne \ell$.


\section{Some structural results}
\label{sec:struc}

Let $\G$ denote a NDB graph with diameter $d \ge 3$ and $\gamma=\gamma(\G)=d+1$. In this section we prove certain structural results about  $\G$. To do this, let us pick arbitrary vertices $x_0, x_d$ of $\G$ with $d(x_0,x_d)=d$,  set $D^i_j=D^i_j(x_1, x_0)$, and let us pick a shortest path $x_0, x_1, \ldots, x_d$ between $x_0$ and $x_d$. Let $\ell=\ell(x_0,x_1)$ and recall that the unique vertex $u \in W_{x_1,x_0} \setminus \{x_1, x_2, \ldots, x_d \}$ is contained in $D^{\ell-1}_{\ell}$. Observe that  
\begin{equation}
	\label{eq1}
	\{x_0, x_1, \ldots, x_{d-1}\} \subseteq W_{x_{d-1},x_d}
\end{equation}
and 
\begin{equation}
	\label{eq2}
	\{x_2, x_3, \ldots, x_d\} \subseteq W_{x_2,x_1}.
\end{equation}
Note that if $\ell \ge 3$, then also $u \in W_{x_2,x_1}$.
In addition, we will use the following abbreviations:
$$
A = \bigcup_{i=2}^{d} \big( \G(x_i) \cap D^i_i\big),
$$
$$
  B = \big( \G(x_2) \cap D^2_1 \big) \cup \big( \G(x_d) \cap D^d_{d-1} \big).
$$ 

\begin{proposition}
\label{p1}
With the notation above, the following {\rm (i), (ii)} hold.
\begin{enumerate}[label={\rm(\roman*)}]
\item There are no edges between $x_i$ and $D^i_{i-1} \cup D^{i-1}_{i-1}$ for $3 \le i \le d-1$.
\item $|\G(x_2) \cap (D^1_1 \cup D^2_1)| \le 1$.
\end{enumerate}
\end{proposition}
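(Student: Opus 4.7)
\textit{Plan.} Both parts rely on the NDB identity $|W_{a,b}|=d+1$ applied to carefully selected edges, together with the rigidity of the unique ``extra'' vertex $u$ for the edge $x_0x_1$.

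\smallskip
\noindent\textit{Part (ii).} Observe first that
\[
\Gamma(x_2)\cap(D^1_1\cup D^2_1)=(\Gamma(x_0)\cap\Gamma(x_2))\setminus\{x_1\},
\]
so (ii) says that $x_0$ and $x_2$ share at most two common neighbours. Apply NDB to the edge $x_1x_2$. The set $\{x_2,\ldots,x_d\}$ contributes $d-1$ vertices to $W_{x_2,x_1}$; every $y\in\Gamma(x_2)\cap D^2_1$ satisfies $d(y,x_2)=1<2=d(y,x_1)$ and hence also lies in $W_{x_2,x_1}$; and when $\ell\ge 3$ the vertex $u$ itself lies in $W_{x_2,x_1}$. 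Together with $|W_{x_2,x_1}|=d+1$ this yields $|\Gamma(x_2)\cap D^2_1|\le 2-\mathbb{1}[\ell\ge 3]$. To account for the $D^1_1$ part, each $y\in\Gamma(x_2)\cap D^1_1$ provides an alternative shortest path $P':x_0,y,x_2,\ldots,x_d$. Applying the Section~\ref{sec:struc} setup to $P'$ forces a unique extra vertex $u_y$ for the edge $x_0y$; when $\ell\ge 3$, the bounds $d(u,y)\le d(u,x_2)+d(x_2,y)=\ell-1$ and $d(u,y)\ge d(u,x_0)-1=\ell-1$ give $d(u,y)=\ell-1$, so $u\in W_{y,x_0}$ and thus $u_y=u$ with $u\in D^{\ell-1}_{\ell}(y,x_0)$. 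The resulting singleton conditions $D^{j-1}_j(y,x_0)=\{x_j\}$ for $j\ne\ell$, imposed simultaneously for two distinct such $y$'s, clash with the analogous singleton conditions coming from the original path. The corner case $\ell=2$ is dealt with by invoking the symmetric extra vertex $w$ near $x_d$ arising from Equation~\eqref{eq1}.

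\smallskip
\noindent\textit{Part (i).} Argue by contradiction: suppose $y\sim x_i$ with $y\in D^i_{i-1}\cup D^{i-1}_{i-1}$ and $3\le i\le d-1$. Treat first $y\in D^i_{i-1}$. From $d(y,x_0)=i-1$, $d(y,x_1)=i$ and $y\sim x_i$, repeated application of the triangle inequality forces
\[
d(y,x_j)=|j-i|+1\qquad\text{for every }1\le j\le d,
\]
and in particular $d(y,x_{i-1})=2$. Consequently $\{x_1,\ldots,x_d\}\subseteq W_{x_i,y}$ while $x_0\notin W_{x_i,y}$, and NDB then gives a \emph{unique} extra vertex $v^*\in W_{x_i,y}\setminus\{x_1,\ldots,x_d\}$. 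Locating $v^*$ in the slices $D^p_{p+1}(x_i,y)$ via Lemma~\ref{pr}(iv) and comparing with the rigid position of $u$ in $D^{\ell-1}_{\ell}(x_1,x_0)$ (forced by the original setup) yields an incompatibility, giving the contradiction. The case $y\in D^{i-1}_{i-1}$ is handled by an analogous distance analysis, splitting on $d(y,x_{i-1})\in\{1,2\}$ and producing either the same contradiction via $W_{x_i,y}$ or a parallel one via $W_{x_i,x_{i-1}}$.

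\smallskip
\noindent\textit{Main obstacle.} The hardest step is converting two ``exactly one extra vertex'' identities (for the edges $x_0x_1$ and $yx_i$) into a genuine contradiction. This requires carefully tracking the precise $D$-slice of each extra vertex and reconciling it with the uniqueness of $u$, together with a separate treatment of the corner cases $\ell\in\{2,d\}$ where the symmetric vertex $w$ near $x_d$ must take over the role of $u$.
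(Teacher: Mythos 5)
There is a genuine gap in both parts: you never actually produce $d+2$ vertices inside a single set $W_{a,b}$, which is the only way to contradict $\gamma=d+1$. The paper's device, absent from your attempt, is to apply the NDB condition to the edge $x_{d-1}x_d$: by \eqref{eq1} the set $W_{x_{d-1},x_d}$ already contains the $d$ vertices $x_0,\dots,x_{d-1}$, so it has room for exactly one further vertex. For (ii), any two distinct $z_1,z_2\in\G(x_2)\cap(D^1_1\cup D^2_1)$ satisfy $d(z_j,x_{d-1})\le 1+(d-3)=d-2$ and $d(z_j,x_d)\ge d(x_0,x_d)-d(x_0,z_j)=d-1$, hence both lie in $W_{x_{d-1},x_d}\setminus\{x_0,\dots,x_{d-1}\}$, a contradiction. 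For (i), a neighbour $z$ of $x_i$ in $D^i_{i-1}\cup D^{i-1}_{i-1}$ is at distance $i-1\ge 2$ from $x_0$, and all $i-1$ vertices of a geodesic from $x_0$ to $z$ other than $x_0$ itself lie in $W_{x_{d-1},x_d}\setminus\{x_0,\dots,x_{d-1}\}$; again a contradiction.

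Your argument does not reach either contradiction. In (i) your distance computation $d(y,x_j)=|j-i|+1$ is correct, but the conclusion that $W_{x_i,y}$ equals $\{x_1,\dots,x_d\}$ plus exactly one extra vertex $v^*$ is precisely what NDB asserts and is not by itself contradictory; the promised ``incompatibility'' between the slice of $v^*$ and the slice of $u$ is never derived, and since $v^*$ and $u$ are the extra vertices of two different edges there is no a priori relation between them. This is exactly the step that must manufacture $d+2$ vertices somewhere, and it is missing. In (ii) the edge $x_1x_2$ only detects the $D^2_1$-neighbours (a vertex of $\G(x_2)\cap D^1_1$ is equidistant from $x_1$ and $x_2$, so it lies in neither $W_{x_2,x_1}$ nor $W_{x_1,x_2}$), your bound degenerates to $|\G(x_2)\cap D^2_1|\le 2$ when $\ell=2$, and the claimed ``clash of singleton conditions'' for two $D^1_1$-neighbours is asserted rather than proved. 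Your closing remark about the symmetric extra vertex near $x_d$ coming from \eqref{eq1} is in fact the germ of the correct proof; carried out, it handles all cases of both parts at once and makes the rest of your construction unnecessary.
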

\begin{proof}
	(i) Assume that for some $3 \le i \le d-1$ we have that $z$ is a neighbour of $x_i$ contained in $D^i_{i-1} \cup D^{i-1}_{i-1}$. Let $x_0, y_1, \ldots, y_{i-2}, z$ be a shortest path between $x_0$ and $z$. Observe that $\{y_1, \ldots, y_{i-2}, z\} \cap \{x_0, x_1, \ldots x_{d-1}\} = \emptyset$ and that $\{y_1, \ldots, y_{i-2}, z\} \subseteq W_{x_{d-1},x_d}$. These comments, together with \eqref{eq1}, yield $|W_{x_{d-1},x_d}|\ge d+2$, which contradicts the fact that $\gamma=d+1$.
	
	\smallskip \noindent
	(ii) Let $z_1, z_2 \in \G(x_2) \cap (D^1_1 \cup D^2_1)$, $z_1 \ne z_2$. Then $z_1, z_2 \in W_{x_{d-1},x_d}$. This, together with \eqref{eq1}, contradicts the fact that $\gamma=d+1$.
\end{proof}

\begin{proposition}
	\label{p2}
	With the notation above, the following {\rm (i), (ii)} hold.
\begin{enumerate}[label={\rm(\roman*)}]
\item The number $|A \cup B| \le 2$. 
\item If $\ell\ge 3$, then $|A \cup B \cup \big(\G(u) \cap (D^{\ell}_{\ell} \cup D^{\ell}_{\ell-1})\big)| =1$. 
\end{enumerate}
\end{proposition}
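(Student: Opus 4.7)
\noindent The plan is to exploit the NDB constraint on the single edge $x_1x_2$. Since $|W_{x_2,x_1}|=d+1$, while \eqref{eq2} gives $\{x_2,\ldots,x_d\}\subseteq W_{x_2,x_1}$ (contributing $d-1$ vertices), and when $\ell\ge 3$ also $u\in W_{x_2,x_1}$, there is room for at most one or two additional vertices. Both parts of the proposition will be established by exhibiting the relevant union as a subset of $W_{x_2,x_1}$ that is disjoint from $\{x_2,\ldots,x_d\}$ (respectively from $\{x_2,\ldots,x_d,u\}$ in part (ii)), and then invoking the size bound $|W_{x_2,x_1}|=d+1$.

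\smallskip\noindent
For part (i), I would verify $A\cup B\subseteq W_{x_2,x_1}$ by triangle inequalities: if $z\in\G(x_i)\cap D^i_i$ with $2\le i\le d$, then $z\sim x_i$ yields $d(z,x_2)\le 1+(i-2)=i-1<i=d(z,x_1)$; the cases $z\in\G(x_2)\cap D^2_1$ and $z\in\G(x_d)\cap D^d_{d-1}$ are analogous one-line checks. Each such $z$ is distinct from every $x_j$, because the sets $D^i_i$, $D^2_1$, $D^d_{d-1}$ are disjoint from all the $D^{j-1}_j$ cells in which the $x_j$ live. Since $\{x_2,\ldots,x_d\}$ occupies $d-1$ of the $d+1$ places in $W_{x_2,x_1}$, the bound $|A\cup B|\le 2$ follows.

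\smallskip\noindent
For part (ii), the assumption $\ell\ge 3$ activates $u\in W_{x_2,x_1}$, and combined with $d(u,x_1)=\ell-1$ the triangle inequality forces $d(u,x_2)=\ell-2$. Applying the same recipe to any neighbour $z$ of $u$ in $D^\ell_\ell\cup D^\ell_{\ell-1}$ yields $d(z,x_2)\le 1+(\ell-2)=\ell-1<\ell=d(z,x_1)$, so $\G(u)\cap(D^\ell_\ell\cup D^\ell_{\ell-1})\subseteq W_{x_2,x_1}$. Disjointness of this set from $\{x_2,\ldots,x_d,u\}$ is immediate ($z\sim u$, and each $x_j\in D^{j-1}_j$ lies in none of the cells $D^\ell_\ell$, $D^\ell_{\ell-1}$). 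Hence
$$A\cup B\cup\bigl(\G(u)\cap(D^\ell_\ell\cup D^\ell_{\ell-1})\bigr)\subseteq W_{x_2,x_1}\setminus\{x_2,\ldots,x_d,u\},$$
and the right-hand side has size exactly $(d+1)-d=1$, giving the upper bound $\le 1$.

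\smallskip\noindent
The matching lower bound requires locating the unique vertex $w\in W_{x_2,x_1}\setminus\{x_2,\ldots,x_d,u\}$ inside the asserted union. Writing $a=d(w,x_1)$, so that $d(w,x_2)=a-1$, one has $a\ge 2$ (else $w=x_2$). The complete enumeration of $D^{i-1}_i$ recalled in the introductory setup (each such cell equals $\{x_i\}$, or $\{x_\ell,u\}$ when $i=\ell$) rules out $d(w,x_0)=a+1$, so $w\in D^a_a\cup D^a_{a-1}$. The case $a=2$ is immediate: $w\sim x_2$ and $w\in D^2_1\cup D^2_2$ place $w$ in $B\cup A$. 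For $a\ge 3$, the plan is to combine Lemma~\ref{pr} (which restricts $\G(w)$ to nearby $D$-cells) with Proposition~\ref{p1} (which forbids $x_i$-$D^i_{i-1}$ and $x_i$-$D^{i-1}_{i-1}$ edges for $3\le i\le d-1$) to cascade adjacency restrictions along the diagram and force $w$ to be adjacent to either $u$ (landing in $\G(u)\cap(D^\ell_\ell\cup D^\ell_{\ell-1})$), to $x_d$ (landing in $B$), or to some $x_i$ with $w\in D^i_i$ (landing in $A$). I expect this final cascading case analysis for intermediate $a$ to be the main technical obstacle.
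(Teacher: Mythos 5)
Your treatment of part (i) and of the upper bound in part (ii) is correct and coincides with the paper's own argument: each set is shown to lie in $W_{x_2,x_1}$ and to be disjoint from $\{x_2,\ldots,x_d\}$ (respectively from $\{x_2,\ldots,x_d,u\}$), and then $|W_{x_2,x_1}|=d+1$ does the rest.

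The genuine gap is the lower bound in (ii), which you leave as a ``plan'' for $a\ge 3$ and yourself flag as the main unresolved obstacle. The cascading case analysis you propose via Lemma~\ref{pr} is not what is needed and, as sketched, does not decide between $w\in D^a_{a-1}$ and $w\in D^a_a$, nor does it force adjacency to $x_a$ or $u$. The missing idea is a counting argument along a shortest $w$--$x_2$ path: letting $w$ be the unique vertex of $W_{x_2,x_1}\setminus\{x_2,\ldots,x_d,u\}$ with $a=d(w,x_1)\ge 3$, pick a neighbour $w_1$ of $w$ with $d(w_1,x_2)=a-2$. The triangle inequality gives $d(w_1,x_1)=a-1$, so $w_1\in W_{x_2,x_1}$ and $w_1\ne w$, whence $w_1\in\{x_2,\ldots,x_d,u\}$; since $d(w_1,x_1)=a-1$, either $w_1=x_a$, or $a=\ell$ and $w_1=u$. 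In the latter case $w\in\G(u)\cap(D^\ell_\ell\cup D^\ell_{\ell-1})$; in the former, Proposition~\ref{p1}(i) rules out $w\in D^a_{a-1}$ for $3\le a\le d-1$, leaving $w\in\G(x_a)\cap D^a_a\subseteq A$, while for $a=d$ the remaining possibility $w\in\G(x_d)\cap D^d_{d-1}$ lands in $B$. The paper compresses all of this into the single sentence ``if the union is empty then $|W_{x_2,x_1}|=d$,'' so you correctly located the nontrivial step, but your proposal does not supply it.
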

\begin{proof}
	(i) Note that $A \cup B \subseteq W_{x_2,x_1}$ and that $(A \cup B) \cap \{x_2, \ldots, x_d\} = \emptyset$. This, together with \eqref{eq2}, forces $|A \cup B| \le 2$.
	
	\smallskip \noindent
	(ii) Note that in this case we have that $u \in W_{x_2, x_1}$. The proof that  $|A \cup B \cup \big(\G(u) \cap (D^{\ell}_{\ell} \cup D^{\ell}_{\ell-1})\big)| \le 1$ is now similar to the proof of (i) above. On the other hand, if  $|A \cup B \cup \big(\G(u) \cap (D^{\ell}_{\ell} \cup D^{\ell}_{\ell-1})\big)| =0$, then $|W_{x_2, x_1}|=d$, contradicting the fact that $\gamma=d+1$.
\end{proof}


\section{Regular NDB graphs with $\gamma=d+1$}
\label{sec:regular}

Let $\G$ denote a regular NDB graph with valency $k$, diameter $d \ge 3$ and $\gamma=\gamma(\G) = d+1$. In this section we use the results from Section \ref{sec:struc} to find bounds on $k$ and $d$. As in the previous section, let us pick arbitrary vertices $x_0, x_d$ of $\G$ with $d(x_0,x_d)=d$, and let us pick a shortest path $x_0, x_1, \ldots, x_d$ between $x_0$ and $x_d$. Set $D^i_j=D^i_j(x_1, x_0)$ and  $\ell=\ell(x_0,x_1)$.

\begin{theorem}
	\label{tval}
	Let $\G$ denote a regular NDB graph with valency $k$, diameter $d \ge 3$ and $\gamma=d+1$.   Then $k \in \{3,4,5\}$. 
\end{theorem}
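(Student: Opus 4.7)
The plan is to bound $k$ by locating the neighbours of $x_2$ within the sets $D^i_j$. Since $x_2\in D^1_2$, Lemma \ref{pr}(iii) confines $\G(x_2)$ to $\{x_1\}\cup D^1_1\cup D^1_2\cup D^2_1\cup D^2_2\cup D^2_3$, so I would write $k=1+a+b+c+e+f$ with $a,b,c,e,f$ the sizes of these five intersections. Proposition \ref{p1}(ii) gives $a+c\le 1$; since $\G(x_2)\cap D^2_2\subseteq A$, $\G(x_2)\cap D^2_1\subseteq B$, and $A\cap B=\emptyset$, Proposition \ref{p2}(i) gives $c+e\le|A\cup B|\le 2$; and the choice of $\ell$ yields $b\le|D^1_2|-1$ (so $b=0$ unless $\ell=2$) together with $f\le|D^2_3|$ (so $f\le 1$ unless $\ell=3$).

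In the case $\ell\ge 3$, Proposition \ref{p2}(ii) sharpens $|A\cup B|\le 1$, forcing $c+e\le 1$; combined with $a+c\le 1$ this yields $a+c+e\le 2$, and since $b=0$ the bound becomes $k\le 3+f\le 5$ (with $k\le 4$ when $\ell\ge 4$). This disposes of every $\ell\ge 3$.

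The case $\ell=2$ is tighter: the bounds above only give $k\le 6$. To close the gap I would repeat the analysis at $x_3\in D^2_3$, using Lemma \ref{pr}(iii) to confine $\G(x_3)$ to $D^1_2\cup D^2_2\cup D^2_3\cup D^3_2\cup D^3_3\cup D^3_4$. When $d\ge 4$, Proposition \ref{p1}(i) with $i=3$ eliminates the $D^3_2\cup D^2_2$ contribution; together with $|D^1_2|=2$, $|D^2_3|-1=0$, $|D^3_4|=1$, and $|\G(x_3)\cap D^3_3|\le|A\cup B|\le 2$, this yields $k\le 5$. The truly delicate remaining case is $\ell=2$ and $d=3$, where Proposition \ref{p1}(i) is vacuous and $x_3$ simultaneously plays the role of $x_d$; I expect this to be the main obstacle. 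To settle it I would invoke the analogues of Propositions \ref{p1} and \ref{p2} for the reversed path $x_3,x_2,x_1,x_0$, combine them with the bounds already established for $\G(x_2)$ and with the NDB identity on the edge $x_2x_3$, and thereby rule out $k\ge 6$ by a direct counting contradiction.
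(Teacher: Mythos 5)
Your bookkeeping for $\ell\ge 3$ is correct and essentially reproduces the paper's argument, and your treatment of $\ell=2$ with $d\ge 4$ via the neighbourhood of $x_3$ is a valid alternative route (the paper never examines $\G(x_3)$ here). The genuine gap is the case $\ell=2$, $d=3$, which you explicitly leave as a plan rather than a proof. Saying that you ``would invoke the analogues of Propositions \ref{p1} and \ref{p2} for the reversed path \dots and thereby rule out $k\ge 6$ by a direct counting contradiction'' is a statement of intent, not an argument: you have not exhibited the contradiction, and it is not clear that the reversed-path analogues together with the NDB identity on $x_2x_3$ suffice. Note that in the extremal configuration $k=6$ your own inequalities force $a=1$, $c=0$, $e=2$, $b=f=1$; this already pins down $W_{x_2,x_1}=\{x_2,x_3,w_1,w_2\}$ with $w_1,w_2\in D^2_2$, and hence $\G(x_3)\cap(D^3_2\cup D^3_3)=\emptyset$, so $\G(x_3)\subseteq D^1_2\cup D^2_2$ with at least four neighbours of $x_3$ in $D^2_2$ --- a configuration that is not self-evidently contradictory and still needs a further idea to exclude.

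The paper closes the whole case $\ell=2$ (all $d\ge 3$ at once) with a single observation you are missing: if $x_2\sim u$, then $u\in W_{x_{d-1},x_d}$, so $W_{x_{d-1},x_d}$ already contains the $d+1$ vertices $x_0,\dots,x_{d-1},u$, and therefore $x_2$ can have \emph{no} neighbour in $D^1_1\cup D^2_1$. In your notation this gives $a=c=0$ whenever $b=1$, so $k=1+b+(a+c)+e+f\le 1+1+0+2+1=5$ when $x_2\sim u$, and $k\le 1+0+1+2+1=5$ when $x_2\not\sim u$. Replacing your unfinished $d=3$ discussion with this observation (or supplying a complete counting argument for that subcase) is what is needed to finish the proof.
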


\begin{proof}
	First note that a cycle $C_n \; (n \ge 3)$ is NDB with $\gamma(C_n)$ equal to the diameter of $C_n$. Therefore, $k\ge 3$.  
	
	Assume first that $\ell=2$. By Proposition~\ref{p1}(ii), $x_2$ has at most one neighbour in $D_1^1 \cup D^2_1$. Moreover, if  $x_2$ and $u$ are adjacent, then also $u \in W_{x_{d-1}, x_d}$, and so in this case $x_2$ has no neighbours in $D_1^1 \cup D^2_1$. By Proposition~\ref{p2}(i), $x_2$ has at most two neighbours in $D_2^2$. Summarizing the above comments, we have that $x_2$ has at most $3$ neighbours in $D^1_1 \cup D^2_1 \cup D^2_2$ if $x_2$ and $u$ are not adjacent, and at most $2$ neighbours in $D^1_1 \cup D^2_1 \cup D^2_2$ if $x_2$ and $u$ are adjacent. Therefore, $k \le 5$.
	
	Assume next that $ \ell \ge 3$. By Propositions~\ref{p1}(ii) and~\ref{p2}(ii), $x_2$ has at most one neighbour in $D^1_1 \cup D^2_1$, and at most one neighbour in $D^2_2$. Therefore, if $\ell=3$ then $u \sim x_2$ and $k \le 5$. If $\ell>3$ then $u \not\sim x_2$ and $k \le 4$. 
\end{proof}

\begin{theorem}
\label{thm:diameter}
Let $\G$ denote a regular NDB graph with valency $k$, diameter $d \ge 3$ and $\gamma=d+1$.  Then the following {\rm (i)-(iii)} hold.
\begin{enumerate}[label={\rm(\roman*)}]
\item If $k=3$, then $d \in \{3,4,5\}$.
\item If $k=4$, then $d \in \{3,4\}$.
\item If  $k=5$, then $d=3$. 
\end{enumerate}
\end{theorem}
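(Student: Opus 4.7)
The plan is to extend the counting argument of Theorem~\ref{tval} from the single vertex $x_2$ to every intermediate path vertex $x_i$ with $3\le i\le d-1$. For each such $i$, Lemma~\ref{pr}(iii) applied to $x_i\in D^{i-1}_i$ together with Proposition~\ref{p1}(i) yields
\[
\G(x_i)\;\subseteq\;D^{i-2}_{i-1}\cup D^{i-1}_i\cup D^i_i\cup D^i_{i+1}.
\]
The path neighbours $x_{i-1}\in D^{i-2}_{i-1}$ and $x_{i+1}\in D^i_{i+1}$ are always present, and since $|D^{j-1}_j|=1$ for $2\le j\le d$ except $j=\ell$, the extra vertex $u\in D^{\ell-1}_\ell$ lies in one of the three ``non-$D^i_i$'' sets above exactly when $i\in\{\ell-1,\ell,\ell+1\}$. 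Writing $\epsilon_i\in\{0,1\}$ for the indicator that $i\in\{\ell-1,\ell,\ell+1\}$ and $u\sim x_i$, this gives $|\G(x_i)\cap D^i_i|=k-2-\epsilon_i$.

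Setting $t:=\sum_{i=3}^{d-1}\epsilon_i$, and noting that the sets $\G(x_i)\cap D^i_i$ lie in pairwise disjoint layers and are all contained in the set $A$ of Section~\ref{sec:struc}, I obtain
\[
(d-3)(k-2)-t\;\le\;|A|.
\]
If $\ell=2$, then $u\in D^1_2$ can only be adjacent to $x_3$ among $\{x_3,\dots,x_{d-1}\}$, so $t\le 1$, and Proposition~\ref{p2}(i) gives $|A|\le 2$, hence $(d-3)(k-2)\le 3$. If $\ell\ge 3$, Proposition~\ref{p2}(ii) sharpens $|A|$ to at most $1$, while $t$ is bounded by $|\{\ell-1,\ell,\ell+1\}\cap\{3,\dots,d-1\}|\le\min(3,d-3)$. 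A short case check on $k\in\{3,4,5\}$ using these two inequalities rules out all pairs $(k,d)$ outside the list in the theorem, except for a small set of borderline configurations in which the inequality above is saturated.

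The borderline cases are handled by pushing the analysis to the endpoint vertices $x_2$ and $x_d$. In an extremal configuration the inequality above is tight, which forces $|A|$ to attain its maximum and, by Proposition~\ref{p2}(i), pins down $A\cup B$ completely; in particular $\G(x_2)\cap D^2_1$ and $\G(x_d)\cap D^d_{d-1}$ must be empty. A direct count of $\G(x_2)$ via Lemma~\ref{pr}(ii) and Proposition~\ref{p1}(ii), combined with the tight sizes of $D^0_1, D^1_2, D^2_3$ determined by $\ell$, then shows that $x_2$ cannot have $k$ neighbours, contradicting regularity. The main obstacle is precisely this boundary analysis: the middle-vertex count is uniformly off by one in the extremal cases, so to close the gap I have to exploit the exact equality in Proposition~\ref{p2}(ii) and the rigid structure of $D^{i-1}_i$ and $D^i_{i-1}$ (together with Lemma~\ref{eq} and the symmetric version of the whole argument applied to the edge $x_{d-1}x_d$, with its own extra vertex $u'$ and parameter $\ell'$).
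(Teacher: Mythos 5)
Your counting framework is sound and is essentially a clean repackaging of what the paper does: for $3\le i\le d-1$, Lemma~\ref{pr}(iii) plus Proposition~\ref{p1}(i) do give $|\G(x_i)\cap D^i_i|=k-2-\epsilon_i$, and comparing the disjoint union of these sets with the bounds $|A\cup B|\le 2$ (resp.\ $\le 1$) from Proposition~\ref{p2} is exactly the paper's mechanism. This disposes of part (ii) completely and of part (iii) for $\ell\ge 3$; for $k=5$, $\ell=2$, $d=4$ your inequality is saturated, but including the $i=2$ term (via Proposition~\ref{p1}(ii), $x_2$ has at least one neighbour in $D^2_2$ when $k=5$) closes it, which is what the paper does. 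Note in passing that $x_2\in D^1_2$, so the relevant item for counting $\G(x_2)$ is Lemma~\ref{pr}(iii), not (ii).

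The genuine gap is in part (i). For $k=3$ your inequality leaves open $d=6$ with $\ell=2$, and $d\in\{6,7\}$ with $\ell\ge 3$ and $t\ge 2$, and these residual cases are the bulk of the paper's proof; your sketch of the ``boundary analysis'' does not contain the arguments that actually close them. Two mechanisms are needed that you never mention. First, when two or three of $x_{\ell-1},x_\ell,x_{\ell+1}$ are adjacent to $u$ in a cubic graph, $u$ and some $x_j$ end up with identical closed neighbourhoods, whence $|W_{u,x_j}|=|W_{x_j,u}|=1<\gamma$; this is how the paper kills $\ell=3$, $\ell=d-1$ and the subcase $x_2\sim u$ of $\ell=2$. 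Second, the forced adjacency $x_3\sim u$ (or the analogous one) puts $u$ into $W_{x_{d-1},x_d}$, and since $\{x_0,\dots,x_{d-1}\}\subseteq W_{x_{d-1},x_d}$ already accounts for $d$ of the $d+1$ vertices, this is what excludes neighbours of $x_2$ in $D^1_1\cup D^2_1$ --- not merely the emptiness of $B$, since $D^1_1$ is not part of $B$. Your stated conclusion for the extremal cases (``$x_2$ cannot have $k$ neighbours'') is also not quite the right contradiction: $x_2$ can have $k$ neighbours, but each remaining option for its third neighbour ($u$, a vertex of $D^1_1\cup D^2_1$, or a vertex of $D^2_2$) is ruled out by one of the three different arguments above, the last one by overflowing $|A|$. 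Without the twin argument and the $W_{x_{d-1},x_d}$ argument being made explicit, the proof of part (i) is not complete.
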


\begin{proof}
(i) Assume that $d \ge 6$ and consider first the case $\ell=2$. Note that by Proposition~\ref{p1}(i)  $x_4$ and $x_5$ have a neighbour in $D^4_4$ and $D^5_5$ respectively. If $x_3$ has a heighbour in $D^3_3$ then this contradicts Proposition~\ref{p2}(i). Therefore, $x_3$ and $u$ are adjacent and so $u \in W_{x_{d-1}, x_d}$. This and \eqref{eq1} implies that $x_2$ has no neighbours in $D^1_1 \cup D^2_1$. If $x_2$ and $u$ are adjacent, then we have that $|W_{u, x_2}| = |W_{x_2, u}|=1$, contradicting $\gamma=d+1$. Therefore, $x_2$ has a neighbour in $D_2^2$, contradicting Proposition~\ref{p2}(i). 

If $\ell=3$, then by Proposition~\ref{p1}(i) vertex $x_5$ has a neighbour in $D^5_5$. By Proposition~\ref{p1}(i)  and Proposition~\ref{p2}(ii), $x_3$ and $x_4 $ are both adjacent with $u$. But then $|W_{u, x_3}| = |W_{x_3, u}|=1$, contradicting $\gamma=d+1$.

If $\ell=d-1$, then by Proposition~\ref{p1}(i) vertex $x_3$ has a neighbour in $D^3_3$.
Proposition~\ref{p1}(i) and Proposition~\ref{p2}(ii) now forces that $x_2$ has a neighbour in $D^1_1$ and that $x_{d-1}$ and $u$ are adjacent. As $|W_{x_{d-1}, x_d}|=d+1$ we have that also $x_d$ and $u$ are adjacent (otherwise $u \in W_{x_{d-1}, x_d}$). But now $|W_{u, x_{d-1}}| = |W_{x_{d-1}, u}|=1$, contradicting $\gamma=d+1$.

If $\ell=d$, then $x_3$ and $x_4$ both have a neighbour in $D^3_3$ and $D^4_4$ respectively, contradicting Proposition~\ref{p2}(ii). 

Assume finally that $ 4 \le \ell \le d-2$.  Similarly as above we see that $x_{\ell}$ and $x_{\ell+1}$ are not both adjacent to $u$, so either $x_\ell$ has a neighbour in $D^\ell_\ell$ or $x_{\ell+1}$ has a neighbour in $D^{\ell+1}_{\ell+1}$ (but not both). Therefore we have that $u \in W_{x_{d-1}, x_d}$, and so $x_2$ has no neighbours in $D^1_1 \cup D^2_1$. Consequently, $x_2$ has a neighbour in $D^2_2$, contradicting Proposition~\ref{p2}(ii). 
	
\smallskip \noindent
(ii) Assume $d \ge 5$. If $\ell=2$, then by Proposition~\ref{p1}(i) vertex $x_3$ has at least one neighbour in $D^3_3$, while vertex $x_4$ has two neighbours in $D^4_4$. However, this contradicts Proposition~\ref{p2}(i).

If $\ell \ge 3$, then again by Proposition~\ref{p1}(i) vertex $x_3$ (vertex $x_4$, respectively)  has at least one neighbour in $D^3_3$ ($D^4_4$, respectively), contradicting Proposition~\ref{p2}(ii).

\smallskip \noindent
(iii) Assume $d \ge 4$. It follows from the proof of Theorem~\ref{tval} that in this case $\ell \in \{2,3\}$ holds. If  $\ell=2$, then by Proposition~\ref{p1}(ii) and since $k=5$, vertex $x_2$ has at least one neighbour in $D^2_2$, while  vertex $x_3$ has at least two neighbours in $D_3^3$. However, this contradicts Proposition~\ref{p2}(i). 

If $\ell \ge 3$, then by Proposition~\ref{p1}(i) vertex $x_3$ has at least two neighbours in $D^3_3$, again contradicting Proposition~\ref{p2}(ii). This shows that $d=3$.
\end{proof}

\begin{proposition}\label{ecc3}
	Let $\G$ denote a regular NDB graph with valency $k$, diameter $d = 3$ and $\gamma=4$. Then for every $x \in V(\G)$ we have eccentricity $\epsilon(x)=3$. 
\end{proposition}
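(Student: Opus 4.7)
The plan is to argue by contradiction. Suppose some vertex $x$ has $\epsilon(x)<d=3$. Because a graph of diameter $3$ is not complete, $\epsilon(x)\neq 1$; and $\epsilon(x)\neq 0$ since $|V(\G)|\geq 4$. So we may assume $\epsilon(x)=2$, i.e.\ $\G_3(x)=\emptyset$. The strategy is to show that this ``local'' assumption propagates to every vertex, forcing the whole graph to have diameter at most $2$, contradicting $d=3$.

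The crucial step is the following propagation claim: \emph{if $\epsilon(x)=2$, then $\epsilon(y)\leq 2$ for every $y\in\G(x)$.} To prove it, I fix an edge $xy$ and compute $|W_{y,x}|$ and $|W_{x,y}|$ by partitioning according to the distance to $x$. Every $z\in W_{y,x}$ satisfies $d(z,y)<d(z,x)\leq 2$, so a short case check (on $d(z,x)\in\{0,1,2\}$, using that $\G_3(x)=\emptyset$) gives
\[
  |W_{y,x}| \;=\; 1 + |\G(y)\cap\G_2(x)|.
\]
Setting this equal to $4$ yields $|\G(y)\cap\G_2(x)|=3$, and $k$-regularity then forces $\lambda_y:=|\G(x)\cap\G(y)|=k-4$. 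An analogous case check for $W_{x,y}$ (again using $\G_3(x)=\emptyset$) gives
\[
  |W_{x,y}| \;=\; 1 + (k-1-\lambda_y) + |\G_3(y)\cap\G_2(x)|,
\]
and substituting $\lambda_y=k-4$ into the equation $|W_{x,y}|=4$ forces $|\G_3(y)\cap\G_2(x)|=0$. Together with $\G_3(x)=\emptyset$ and the trivial bound $d(y,z)\leq 2$ for $z\in\{x\}\cup\G(x)$, this shows that no vertex of $\G$ is at distance $3$ from $y$, i.e.\ $\epsilon(y)\leq 2$, as claimed.

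To finish, I iterate. Pick any $w\in\G_2(x)$; since $\G_2(x)\subseteq\bigcup_{y\in\G(x)}\G(y)$, there exists $y\in\G(x)$ with $w\sim y$. By the propagation claim, $\epsilon(y)\leq 2$, and in fact $\epsilon(y)=2$ (the alternative $\epsilon(y)=1$ would make $\G$ complete, contradicting $d=3$). Applying the propagation claim again with $y$ in place of $x$ gives $\epsilon(w)\leq 2$. Hence every vertex of $V(\G)=\{x\}\cup\G(x)\cup\G_2(x)$ has eccentricity at most $2$, so $d\leq 2$, a contradiction.

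There is no serious obstacle here; the whole argument is driven by the identity $|W_{y,x}|=|W_{x,y}|=\gamma=d+1=4$, together with the layer partition of $V(\G)$ induced by $x$. The only mild care needed is the careful case split on $d(z,x)\in\{0,1,2,3\}$ when computing the two sides of the equation, and ruling out $\epsilon(x)\in\{0,1\}$ at the very start.
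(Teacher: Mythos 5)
Your proof is correct and rests on the same computation as the paper's: for an edge $xy$ with $\epsilon(x)=2$, the two balance equations $|W_{y,x}|=|W_{x,y}|=\gamma=4$ together with $k$-regularity force $\G_3(y)\cap\G_2(x)=\emptyset$ and hence $\epsilon(y)\le 2$ (the paper phrases exactly this via the sets $D^i_j(x,y)$ and Lemma~\ref{eq}, obtaining $|D^1_2|=|D^2_1|=3$ and $D^2_3=\emptyset$). The only difference is in assembling the contradiction: the paper uses connectedness to place the eccentricity-$2$ vertex adjacent to an eccentricity-$3$ vertex and contradicts $\epsilon(y)=3$ directly, whereas you propagate the eccentricity-$2$ property across the whole graph and contradict $d=3$; both routes are fine.
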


\begin{proof}
	By Theorem~\ref{tval}, we know that $k\in \left\lbrace 3,4,5\right\rbrace $. Assume to the contrary that there exists a vertex $x\in V(\G)$ such that $\epsilon(x)\leq 2$. If $\epsilon(x)=1$ then we observe $|V(\G)|=k+1$, and so $\G$ is isomorphic to the complete graph $K_{k+1}$, contradincting $d=3$. Assume next that $\epsilon(x)=2$. Since  $d=3$ there exists $y\in V(\G)$ such that $\epsilon(y)=3$. As $\G$ is connected we may assume that $x\in \G(y)$. Since  $\epsilon(x)=2$ we observe the sets $D^3_2(x,y)$ and $D^3_3(x,y)$ are both empty. Recall that $\gamma=4$, and so by Lemma \ref{eq} we thus have $|D^1_2(x,y)|=|D^2_1(x,y)|=3$, which implies $D^2_3(x,y)=\emptyset$. This contradicts $\epsilon(y)=3$, and so the result follows.
\end{proof}

\begin{proposition}
\label{nonempty}
Let $\G$ denote a regular NDB graph with valency $k$, diameter $d = 3$ and $\gamma=4$. Then for every edge $xy \in E(\G)$ we have that $|D^2_3(x,y)|=|D^3_2(x,y)| \ne 0$.
\end{proposition}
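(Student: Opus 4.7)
The equality $|D^2_3(x,y)| = |D^3_2(x,y)|$ follows directly from the NDB condition~\eqref{er} and Lemma~\ref{eq}. Since $d=3$, we have $W_{x,y} = \{x\} \cup D^1_2(x,y) \cup D^2_3(x,y)$ and $W_{y,x} = \{y\} \cup D^2_1(x,y) \cup D^3_2(x,y)$, so~\eqref{er} gives
$$
|D^1_2(x,y)| + |D^2_3(x,y)| = 3 = |D^2_1(x,y)| + |D^3_2(x,y)|.
$$
Combined with $|D^1_2(x,y)| = |D^2_1(x,y)|$ from Lemma~\ref{eq}, the remaining cardinalities are equal.

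For the non-vanishing part I argue by contradiction. Suppose $|D^2_3(x,y)| = 0$. Then $|D^3_2(x,y)| = 0$ and $|D^1_2(x,y)| = |D^2_1(x,y)| = 3$, so $|D^1_1(x,y)| = k-4 \geq 0$, forcing $k \in \{4,5\}$ by Theorem~\ref{tval}. By Proposition~\ref{ecc3}, $\epsilon(x) = 3$, so there exists $w \in V(\G)$ with $d(x,w) = 3$; since $D^3_2(x,y) = \emptyset$ we must have $w \in D^3_3(x,y)$, and so $d(y,w) = 3$ as well.

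Now pick a shortest path $x = x_0, x_1, x_2, x_3 = w$. Since $d(y,w) = 3$ we have $x_1 \neq y$, so $x_1 \in D^1_1(x,y) \cup D^1_2(x,y)$. Moreover $x_2 \in D^2_2(x,y)$: indeed $x_2 \notin D^2_3(x,y) = \emptyset$, while $x_2 \in D^2_1(x,y)$ would give $x_2 \sim y$ and hence $d(y,w) \leq 2$, a contradiction. Two structural facts will be useful: by Lemma~\ref{pr} applied to $w \in D^3_3(x,y)$ together with the vanishing of $D^2_3(x,y)$ and $D^3_2(x,y)$, we have $\G(w) \subseteq D^2_2(x,y) \cup D^3_3(x,y)$; and for every $a \in \G(x)$ the triangle inequality gives $\G(a) \subseteq \{x\} \cup \G(x) \cup \G_2(x)$, so $\G(a) \cap D^3_3(x,y) = \emptyset$.

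The remaining step is to apply Propositions~\ref{p1}(ii) and~\ref{p2}(i) to the path $x_0, x_1, x_2, x_3$ with respect to the decomposition $D^i_j(x_1,x)$, and to derive a contradiction. Since every neighbour of $x$ other than $x_1$ is at distance at most $2$ from $x_1$, one has $D^1_1(x_1,x) \cup D^2_1(x_1,x) = \G(x) \setminus \{x_1\}$, so Proposition~\ref{p1}(ii) yields $|\G(x_2) \cap (\G(x) \setminus \{x_1\})| \leq 1$, while Proposition~\ref{p2}(i) yields $|A \cup B| \leq 2$, where $A \cup B$ consists of the neighbours of $x_2$ in $D^2_1(x_1,x) \cup D^2_2(x_1,x)$ together with the neighbours of $w$ in $D^3_2(x_1,x) \cup D^3_3(x_1,x)$. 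The hard part of the argument, which I expect to be the main obstacle, is to combine these bounds with the NDB equation~\eqref{er} and Lemma~\ref{eq} applied to the edge $xx_1$ and with the regularity of $\G$ in order to exhibit three distinct vertices forced into $A \cup B$, contradicting $|A \cup B| \leq 2$. I anticipate that this requires a short case analysis on $k \in \{4,5\}$ and on whether $x_1 \in D^1_1(x,y)$ or $x_1 \in D^1_2(x,y)$, with the three forced vertices coming from carefully chosen neighbours of $x_2$ inside $D^2_2(x_1,x)$ together with one or two neighbours of $w$ at distance $3$ from $x_1$, whose existence is guaranteed by the NDB equation for $xx_1$ and by the fact that $\G(w) \subseteq D^2_2(x,y) \cup D^3_3(x,y)$.
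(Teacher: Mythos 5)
Your reduction is sound as far as it goes: the equality $|D^2_3(x,y)|=|D^3_2(x,y)|$ via Lemma~\ref{eq} and \eqref{er}, the deduction that $|D^1_2(x,y)|=|D^2_1(x,y)|=3$ and hence $k\in\{4,5\}$, and the existence of a vertex $w\in D^3_3(x,y)$ via Proposition~\ref{ecc3} all match the paper's argument. But the proof stops exactly where it has to do its work. You write that the "hard part" is to exhibit three distinct vertices in $A\cup B$ and that you "anticipate" a case analysis on $k$ and on the position of $x_1$ will produce them --- this is a plan, not a proof. No contradiction is ever derived, and it is not evident that the route through Propositions~\ref{p1}(ii) and~\ref{p2}(i) (which forces you to juggle the two distance partitions $D^i_j(x,y)$ and $D^i_j(x_1,x)$ simultaneously) closes cleanly; in particular you never verify that the candidate vertices you would place in $A\cup B$ are pairwise distinct or that they actually lie in the required cells of the partition with respect to the edge $x_1x$.

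The paper finishes much more directly, without Propositions~\ref{p1} and~\ref{p2}: take $z\in D^3_3(x,y)$ and a neighbour $w\in\G(z)\cap D^2_2(x,y)$, and count $W_{w,z}$ for the edge $wz$. Since $|D^1_1(x,y)|=k-4\le 1$, the vertex $w$ must have a neighbour $u\in D^1_2(x,y)$ and a neighbour $v\in D^2_1(x,y)$ (these are distinct, being at different distances from $y$), unless it is adjacent to the unique vertex of $D^1_1(x,y)$ when $k=5$. For $k=4$ this immediately gives $\{u,v,w,x,y\}\subseteq W_{w,z}$, i.e.\ $|W_{w,z}|\ge 5>\gamma$. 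For $k=5$ one first forces $w$ to be adjacent to the unique vertex $u$ of $D^1_1(x,y)$, concludes that the remaining three neighbours of $w$ are common neighbours of $w$ and $z$ with at least two of them in $D^2_2(x,y)$, and then overloads $W_{u,x}$ instead. If you want to salvage your write-up, replacing your final paragraph with this counting argument on the edge $wz$ is the missing step.
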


\begin{proof}
	Let us pick an edge $xy \in E(\G)$ . Recall that by Lemma \ref{eq} we have that $|D^1_2(x,y)|=|D^2_1(x,y)|$, and so it follows from \eqref{er} that $|D^2_3(x,y)|=|D^3_2(x,y)|$ as well. Therefore, it remains to prove that the sets $D^2_3(x,y)$ and $D^3_2(x,y)$ are nonempty. 
	
	Assume to the contrary that the sets $D^3_2(x,y)$ and $D^2_3(x,y)$ are empty. As $\gamma=d+1=4$ we have that $|D^1_2(x,y)|=|D^2_1(x,y)|=3$. In view of Theorem~\ref{tval} we therefore have $k\in \left\lbrace 4,5\right\rbrace $. Moreover, 
	by Proposition~\ref{ecc3} the set $D^3_3(x,y)$ is nonempty. Pick $z\in D^3_3(x,y)$ and note that there exists a vertex $w\in \G(z)\cap D^2_2(x,y)$. 
	
	Assume first that $k=4$. Then the set  $D^1_1(x,y)$ is empty. Hence, there exist vertices $u \in D^{1}_2(x,y)$ and $v \in D^2_1(x,y)$ which are neighbours of $w$. We thus have $\left\lbrace u,v,w,x,y \right\rbrace \subseteq W_{w,z}$, contradicting $\gamma=4$. 
	
	Assume next that $k=5$. Note that in this case $|D^1_1(x,y)|=1$. Let us denote the unique vertex of $D^1_1(x,y)$ by $u$. If $w$ and $u$ are not adjacent, then a similar argument as in the previous paragraph shows that $|W_{w,z}|\ge 5$, a contradiction. Therefore, $w$ and $u$ are adjacent, and so $W_{w,z}=\{x,y,u,w\}$. It follows that the remaining three neighbours of $w$ (let us denote these neighbours by $v_1, v_2, v_3$) are also adjacent to $z$. As $\{u,w,z\} \subseteq W_{u,x}$, at least two of these three common neighbours (say $v_1$ and $v_2$) are in $D^2_2$ (recall $D^2_3$ and $D^3_2$ are empty). By the same argument as above (that is $\G(v_1)\cap(D^1_2\cup D^2_1)=\emptyset$ and $\G(v_2)\cap(D^1_2\cup D^2_1)=\emptyset$), $v_1$ and $v_2$ are adjacent to $u$, and so $\{u,w,v_1,v_2,z\} \subseteq W_{u,x}$, a contradiction. This shows that $D^2_3(x,y)$ and $D^3_2(x,y)$ are both nonempty. 
\end{proof}



{\small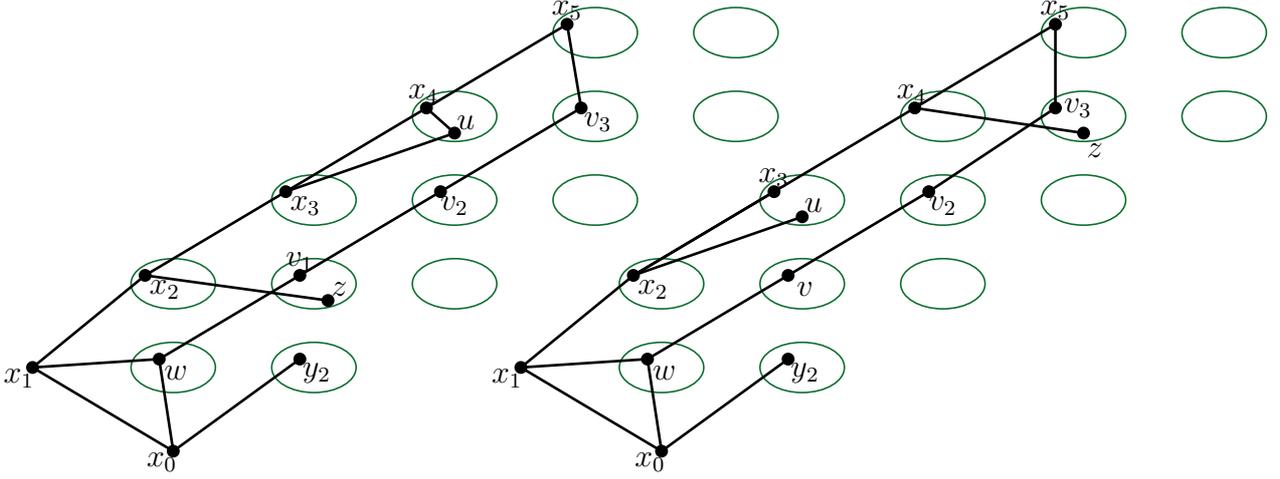
\begin{figure}[t]{\rm
\begin{center}
\begin{tikzpicture}[scale=.37]

\draw[fill=white, draw=ForestGreen, line width=0.6pt] (23.,5.) ellipse (1.5cm and .9cm);
\draw[fill=white, draw=ForestGreen, line width=0.6pt] (18.,5.) ellipse (1.5cm and .9cm);
\draw[fill=white, draw=ForestGreen, line width=0.6pt] (13.,2.) ellipse (1.5cm and .9cm);
\draw[fill=white, draw=ForestGreen, line width=0.6pt] (18.,2.) ellipse (1.5cm and .9cm);
\draw[fill=white, draw=ForestGreen, line width=0.6pt] (23.,2.) ellipse (1.5cm and .9cm);
\draw[fill=white, draw=ForestGreen, line width=0.6pt] (8.,-1.) ellipse (1.5cm and .9cm);
\draw[fill=white, draw=ForestGreen, line width=0.6pt] (13.,-1.) ellipse (1.5cm and .9cm);
\draw[fill=white, draw=ForestGreen, line width=0.6pt] (18.,-1.) ellipse (1.5cm and .9cm);
\draw[fill=white, draw=ForestGreen, line width=0.6pt] (8.,-4.) ellipse (1.5cm and .9cm);
\draw[fill=white, draw=ForestGreen, line width=0.6pt] (13.,-4.) ellipse (1.5cm and .9cm);
\draw[fill=white, draw=ForestGreen, line width=0.6pt] (8.,-7.) ellipse (1.5cm and .9cm);
\draw[fill=white, draw=ForestGreen, line width=0.6pt] (3.,-7.) ellipse (1.5cm and .9cm);
\draw[fill=white, draw=ForestGreen, line width=0.6pt] (3.,-4.) ellipse (1.5cm and .9cm);

\fill (-2,-7) circle [radius=0.23];
\fill (3,-10) circle [radius=0.23];
\node at (-2.1,-7,1) {\normalsize $x_1$};
\node at (2.6,-10.4) {\normalsize $x_0$};

\fill (2,-3.7) circle [radius=0.23]; 
\fill (7,-0.7) circle [radius=0.23]; 
\fill (12,2.3) circle [radius=0.23];  
\fill (13,1.4) circle [radius=0.23]; 
\fill (17,5.3) circle [radius=0.23]; 
\node at (2.7,-4.2) {\normalsize $x_2$};
\node at (7.7,-1.2) {\normalsize $x_3$};
\node at (11.9,2.8) {\normalsize $x_4$};
\node at (13.4,1.8) {\normalsize $u$};
\node at (17,5.8) {\normalsize $x_5$};

\fill (8.5,-4.6) circle [radius=0.23]; 
\node at (8.9,-4.2) {\normalsize $z$};

\draw [line width=1pt, draw=black] (7,-0.7)-- (12,2.3);
\draw [line width=1pt, draw=black] (17,5.3)-- (12,2.3);
\draw [line width=1pt, draw=black] (7,-0.7)-- (13,1.4);
\draw [line width=1pt, draw=black] (12,2.3)-- (13,1.4);

\draw [line width=1pt, draw=black] (3,-10)-- (7.5,-6.7);
\draw [line width=1pt, draw=black] (3,-10)-- (2.5,-6.7);
\draw [line width=1pt, draw=black] (3,-10)-- (-2,-7);
\draw [line width=1pt, draw=black] (2.5,-6.7)-- (-2,-7);
\draw [line width=1pt, draw=black] (2,-3.7)-- (-2,-7);
\draw [line width=1pt, draw=black] (2,-3.7)-- (7,-0.7);
\draw [line width=1pt, draw=black] (2.5,-6.7)-- (7.5,-3.7);
\draw [line width=1pt, draw=black] (12.5,-0.7)-- (7.5,-3.7);
\draw [line width=1pt, draw=black] (12.5,-0.7)-- (17.5,2.3);
\draw [line width=1pt, draw=black] (17,5.3)-- (17.5,2.3);
\draw [line width=1pt, draw=black] (2,-3.7)-- (8.5,-4.6);

\fill (2.5,-6.7) circle [radius=0.23]; 
\fill (7.5,-6.7) circle [radius=0.23]; 
\node at (3.1,-7.2) {\normalsize $w$};
\node at (8.1,-7.2) {\normalsize $y_2$};

\fill (7.5,-3.7) circle [radius=0.23];  
\node at (7.5,-3.2) {\normalsize $v_1$};
\fill (12.5,-0.7) circle [radius=0.23];  
\node at (13,-1.2) {\normalsize $v_2$};
\fill (17.5,2.3) circle [radius=0.23];  
\node at (18.1,1.8) {\normalsize $v_3$};

\end{tikzpicture}~\hspace{-40mm}\begin{tikzpicture}[scale=.37]


\draw[fill=white, draw=ForestGreen, line width=0.6pt] (23.,5.) ellipse (1.5cm and .9cm);
\draw[fill=white, draw=ForestGreen, line width=0.6pt] (18.,5.) ellipse (1.5cm and .9cm);
\draw[fill=white, draw=ForestGreen, line width=0.6pt] (13.,2.) ellipse (1.5cm and .9cm);
\draw[fill=white, draw=ForestGreen, line width=0.6pt] (18.,2.) ellipse (1.5cm and .9cm);
\draw[fill=white, draw=ForestGreen, line width=0.6pt] (23.,2.) ellipse (1.5cm and .9cm);
\draw[fill=white, draw=ForestGreen, line width=0.6pt] (8.,-1.) ellipse (1.5cm and .9cm);
\draw[fill=white, draw=ForestGreen, line width=0.6pt] (13.,-1.) ellipse (1.5cm and .9cm);
\draw[fill=white, draw=ForestGreen, line width=0.6pt] (18.,-1.) ellipse (1.5cm and .9cm);
\draw[fill=white, draw=ForestGreen, line width=0.6pt] (8.,-4.) ellipse (1.5cm and .9cm);
\draw[fill=white, draw=ForestGreen, line width=0.6pt] (13.,-4.) ellipse (1.5cm and .9cm);
\draw[fill=white, draw=ForestGreen, line width=0.6pt] (8.,-7.) ellipse (1.5cm and .9cm);
\draw[fill=white, draw=ForestGreen, line width=0.6pt] (3.,-7.) ellipse (1.5cm and .9cm);
\draw[fill=white, draw=ForestGreen, line width=0.6pt] (3.,-4.) ellipse (1.5cm and .9cm);

\fill (-2,-7) circle [radius=0.23];
\fill (3,-10) circle [radius=0.23];

\node at (-2.1,-7,1) {\normalsize $x_1$};
\node at (2.6,-10.4) {\normalsize $x_0$};

\fill (2,-3.7) circle [radius=0.23]; 

\fill (7,-0.7) circle [radius=0.23]; 
\node at (7,-0.2) {\normalsize $x_3$};

\fill (8,-1.6) circle [radius=0.23]; 
\node at (8.4,-1.2) {\normalsize $u$};

\fill (17,5.3) circle [radius=0.23]; 
\node at (17,5.8) {\normalsize $x_5$};

\node at (2.7,-4.2) {\normalsize $x_2$};

\draw [line width=1pt, draw=black] (2,-3.7)-- (7,-0.7);
\draw [line width=1pt, draw=black] (2,-3.7)-- (8,-1.6);

\fill (12,2.3) circle [radius=0.23];  
\node at (11.9,2.8) {\normalsize $x_4$};

\fill (17,2.3) circle [radius=0.23];  
\node at (17.8,2.3) {\normalsize $v_3$};
\fill (18,1.4) circle [radius=0.23];  
\node at (18.4,0.8) {\normalsize $z$};

\draw [line width=1pt, draw=black] (12,2.3)-- (18,1.4);
\draw [line width=1pt, draw=black] (17,5.3)-- (17,2.3);

\draw [line width=1pt, draw=black] (3,-10)-- (7.5,-6.7);
\draw [line width=1pt, draw=black] (3,-10)-- (2.5,-6.7);
\draw [line width=1pt, draw=black] (3,-10)-- (-2,-7);
\draw [line width=1pt, draw=black] (2.5,-6.7)-- (-2,-7);
\draw [line width=1pt, draw=black] (2,-3.7)-- (-2,-7);
\draw [line width=1pt, draw=black] (2,-3.7)-- (7,-0.7);

\draw [line width=1pt, draw=black] (12,2.3)-- (7,-0.7);
\draw [line width=1pt, draw=black] (12,2.3)-- (17,5.3);
\draw [line width=1pt, draw=black] (2.5,-6.7)-- (7.5,-3.7);
\draw [line width=1pt, draw=black] (12.5,-0.7)-- (7.5,-3.7);
\draw [line width=1pt, draw=black] (12.5,-0.7)-- (17,2.3);

\fill (2.5,-6.7) circle [radius=0.23]; 
\fill (7.5,-6.7) circle [radius=0.23]; 
\node at (3.1,-7.2) {\normalsize $w$};
\node at (8.1,-7.2) {\normalsize $y_2$};
\fill (7.5,-3.7) circle [radius=0.23];  
\node at (8.1,-4.2) {\normalsize $v$};
\fill (12.5,-0.7) circle [radius=0.23];  
\node at (13,-1.2) {\normalsize $v_2$};
\end{tikzpicture}
\caption{\rm 
(a) Case $d=5$, $k=3$ and $\ell=4$ (left).
(b) Case $d=5$, $k=3$ and $\ell=3$ (right).
}
\label{04}
\end{center}
}\end{figure}}





\section{Case $k=3$}
\label{sec:k=3}

Let $\G$ denote a regular NDB graph with valency $k=3$, diameter $d \ge 3$ and $\gamma=\gamma(\G)=d+1$. Recall that by Theorem~\ref{thm:diameter}(i) we have $d \in \{3,4,5\}$. In this section we first show that in fact $d=4$ or $d=5$ is not possible, and then classify $NDB$ graphs with $k=d=3$. We start with a proposition which claims that $d \ne 5$. Although the proof of this proposition is rather tedious and lengthy, it is in fact pretty straightforward. 

\begin{proposition}
	\label{prop:k3dne5}
	Let $\G$ denote a regular NDB graph with valency $k=3$, diameter $d \ge 3$ and $\gamma=\gamma(\G)=d+1$.  Then $d \ne 5$. 
\end{proposition}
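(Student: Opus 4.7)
The plan is to argue by contradiction and exhaust the four possibilities for $\ell = \ell(x_0, x_1) \in \{2,3,4,5\}$. In each case the method is the same: use $k = 3$ to introduce the third neighbour of each $x_i$ ($i = 2, 3, 4$) and the two extra neighbours of $x_5$, use Lemma~\ref{pr}(iii) together with Proposition~\ref{p1}(i) to pin down where these neighbours can lie, and then use Proposition~\ref{p2} to reach a contradiction by counting.

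For $i = 2, 3, 4$ the extra neighbour $z_i$ of $x_i$ lies in $D^{i-1}_{i-1} \cup D^{i-1}_i \cup D^i_{i-1} \cup D^i_i$ by Lemma~\ref{pr}(iii); Proposition~\ref{p1}(i) rules out $D^{i-1}_{i-1} \cup D^i_{i-1}$ when $3 \le i \le 4$, and $D^{i-1}_i$ consists only of $x_i$ unless $\ell = i$ (when it also contains $u$). Thus $z_i \in \{u\} \cup D^i_i$ if $\ell = i$, and $z_i \in D^i_i$ otherwise. The vertex $x_5$ contributes two distinct extra neighbours $z_5^{(1)}, z_5^{(2)} \in D^4_4 \cup D^5_4 \cup D^5_5$, and $z_2$ is restricted by Proposition~\ref{p1}(ii).

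Cases $\ell = 2$ and $\ell = 5$ are handled quickly. For $\ell = 5$, Proposition~\ref{p2}(ii) gives the sharp equality $|A \cup B \cup (\G(u) \cap (D^5_5 \cup D^5_4))| = 1$, but already $z_3 \in D^3_3$ and $z_4 \in D^4_4$ produce two distinct elements of $A$, an immediate contradiction. For $\ell = 2$, the saturation $|A \cup B| \le 2$ of Proposition~\ref{p2}(i) together with $\{z_3, z_4\} \subseteq A$ forces $z_5^{(1)}, z_5^{(2)} \in D^4_4$ with at most one of them adjacent to $x_4$, and $z_2 \in \{u\} \cup D^1_1$; a short analysis of distances from $u$ and $z_4$ to $x_4, x_5$, evaluated against the balance $|W_{x_4,x_5}| = 6$, yields the contradiction.

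Cases $\ell = 3$ (Figure~\ref{04}(b)) and $\ell = 4$ (Figure~\ref{04}(a)) are the principal obstacle. Proposition~\ref{p2}(ii) gives the tight bound
\[
|A \cup B \cup (\G(u) \cap (D^\ell_\ell \cup D^\ell_{\ell-1}))| = 1,
\]
so every extra neighbour falling into this union must collapse to a single vertex. For $\ell = 4$ this forces $z_4 = u$, places $z_5^{(1)}, z_5^{(2)} \in D^4_4$ with neither adjacent to $x_4$, and restricts $\G(u) \subseteq \{x_3, x_4, x_5\} \cup D^3_3$; a count of $|W_{u, x_4}|$ then finishes the case. Case $\ell = 3$ requires analogous but more delicate bookkeeping: one enumerates the possible identifications among $z_2, z_3 = u, z_4, z_5^{(1)}, z_5^{(2)}$ and, in each subcase, applies the exact value $|W_{x_2, x_1}| = 6$ together with Proposition~\ref{p1}(ii) to reach a contradiction. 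The principal difficulty throughout is managing the simultaneous degree-$3$ constraint on the small set $\{x_2, u, z_3, z_4, z_5^{(1)}, z_5^{(2)}\}$, which is exactly what Figure~\ref{04} is arranged to track.
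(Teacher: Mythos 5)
Your overall strategy --- assume $d=5$, split on $\ell\in\{2,3,4,5\}$, and use Propositions~\ref{p1} and~\ref{p2} to count the ``extra'' neighbours of $x_2,\dots,x_5$ --- is the same as the paper's. But there is a genuine gap in the setup that then undermines the case analysis. You claim that for $i\in\{3,4\}$ the third neighbour $z_i$ of $x_i$ satisfies $z_i\in\{u\}\cup D^i_i$ if $\ell=i$ and $z_i\in D^i_i$ otherwise. This is false: Lemma~\ref{pr}(iii) also permits neighbours of $x_i$ in $D^{i-2}_{i-1}$ and $D^i_{i+1}$, and while these sets usually reduce to $\{x_{i-1}\}$ and $\{x_{i+1}\}$, they contain $u$ precisely when $\ell=i-1$ or $\ell=i+1$. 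Thus for $\ell=2$ the vertex $x_3$ may be adjacent to $u\in D^1_2$, for $\ell=3$ the vertex $x_4$ may be adjacent to $u\in D^2_3$, and for $\ell=5$ the vertex $x_4$ may be adjacent to $u\in D^4_5$. These are exactly the adjacencies the paper has to work hardest to rule out (they occupy most of the cases $\ell=2$ and $\ell=3$), and your framework excludes them from the outset. Concretely: your ``immediate contradiction'' for $\ell=5$ assumes $z_4\in D^4_4$, which does not follow (the paper instead pairs $z_3\in D^3_3$ with the third neighbour of $x_2$, using that $x_2\not\sim w$ for the unique $w\in D^1_1$); and your claim $\{z_3,z_4\}\subseteq A$ for $\ell=2$ fails in the subcase $x_3\sim u$, which must be analysed separately.

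Beyond this, the decisive portions of the argument are announced rather than carried out. In the paper, each of the cases $\ell\in\{2,3,4\}$ is closed only after constructing explicit geodesics (e.g.\ a path of length $4$ from $w$ to $x_5$ forced by $w\notin W_{x_4,x_5}$), pinning down $|D^3_3|$ and $|D^4_4|$ exactly, and, in the case $\ell=3$, invoking the even order of a cubic graph to produce a vertex of $D^5_5$ that cannot be attached anywhere without unbalancing $x_2x_1$ or $wx_0$. Phrases such as ``a short analysis of distances \dots yields the contradiction'' and ``one enumerates the possible identifications \dots to reach a contradiction'' stand in for precisely the parts of the proof where the real work lies, so even after repairing the setup the proposal would not yet constitute a proof.
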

\begin{proof}
	Assume to the contrary that $d=5$. Pick vertices $x_0, x_5$ of $\G$ such that $d(x_0,x_5)=5$. Pick also a shortest path $x_0, x_1, x_2, x_3, x_4, x_5$ from $x_0$ to $x_5$ in $\G$. Let $D^i_j=D^i_j(x_1,x_0)$, let $\ell=\ell(x_0,x_1)$ and recall that $2 \le \ell \le 5$.  Observe that if $\ell \ge 3$, then there is a unique vertex $w \in D_1^1$ and a unique vertex $y_2 \in D^2_1$. In this case $x_2$ and $w$ are not adjacent, otherwise edge $w x_1$ is not balanced. Similarly we could prove that $w$ and $y_2$ are not adjacent, and so $w$ has a neighbour $v$ in $D^2_2$. 
	
	Assume first that $\ell=5$. Then by Proposition~\ref{p1}(i) vertex $x_3$ has exactly one neighbour in $D^3_3$. Now vertex $x_2$ has a neighbour in $D^2_1 \cup D^2_2$, contradicting Proposition~\ref{p2}(ii).
	
	Assume $\ell=4$. As $x_2$ has a neighbour in $D^2_1 \cup D^2_2$, Propositions \ref{p1}(i) and \ref{p2}(ii) imply that $x_4$ is adjacent to $u$. If $x_5$ is adjacent to $u$, then $W_{u, x_4} = \{u\}$, a contradiction. Therefore, $x_5$ and $u$ are not adjacent, and so $W_{x_4, x_5} = \{x_4, x_3, x_2, x_1, x_0,u\}$. Consequently, $w \not \in W_{x_4, x_5}$, which implies $d(x_5,w)=4$. It follows that there exists a path $w,v_1,v_2,v_3,x_5$ of length $4$, and it is easy to see that $v_1=v$, $v_2 \in D^3_3$ and $v_3 \in D^4_4$, see Figure~\ref{04}(a).
	
If $x_2$ is adjacent with $y_2$, then $y_2 \in W_{x_4, x_5}$, a contradiction. Therefore, $x_2$ has a neighbour  $z \in D^2_2$. If $z = v$, then $\{x_2, x_3, x_4, x_5, u, v,v_2,v_3\} \subseteq W_{x_2, x_1}$, a contradiction. Therefore $z \ne v$, $W_{x_2, x_1}= \{x_2, x_3, x_4, x_5, u, z\}$, and $z$ is adjacent to $y_2$ (recall that $z$ must be at distance $2$ from $x_0$ and that $y$ is not adjacent with $x_1$ and $v$).
If $z$ has a neighbour in $D^3_2 \cup D^3_3$, then this neighbour would be another vertex in $W_{x_2, x_1}$, which is not possible. The only other possible neighbour of $z$ is $v$, and so $z$ and $v$ are adjacent. It is now clear that $W_{w,v}=\{w,x_0,x_1\}$, contradicting $\gamma=6$.
	 	
{Assume $\ell=3$. By Proposition~\ref{p1}(i), we have that either $x_4$ is adjacent to $u$, or that $x_4$ has a neighbour in $D^4_4$.  Let us first consider the case when $x_4$ and $u$ are adjacent. If also $x_3$ and $u$ are adjacent, then $u x_3$ is clearly not balanced, and so Propositions \ref{p1}(i) and \ref{p2}(ii) imply that $u$ and $x_3$ have a common neighbour $v_2$ in $D^3_3$. Since $x_4 x_5$ is balanced, $v_2$ must be at distance $2$ from $x_5$, which implies that $v_2$ and $x_5$ have a common neighbour $v_3 \in D^4_4$. But now $\{x_2, x_3, x_4, x_5, u, v_2, v_3\} \subseteq W_{x_2, x_1}$, a contradiction. 
Therefore $x_4$ is not adjacent to $u$, and so $x_4$ has a neighbour $z$ in $D^4_4$. Propositions \ref{p1}(i) and \ref{p2}(ii) imply that $x_3$ has no neighbours in $D^2_2 \cup D^3_2 \cup D^3_3$, and so  $x_3$ is adjacent to $u$. This imply that $z$ and $x_5$ are adjacent, as otherwise $x_4 x_5$ is not balanced. Similarly, by Proposition~\ref{p2}(ii) $u$ has no neighbours in $D^3_2 \cup D^3_3$, and so $u$ is adjacent to $v$ (note that $v$ is the unique vertex of $D^2_2$). As in the previous paragraph (since $w\not\in W_{x_4, x_5}=\{x_4,x_3,x_2,x_1,x_0,u\}$) we obtain that there exists a path $w,v,v_2,v_3,x_5$ of length $4$, and that $v_2 \in D^3_3$, $v_3 \in D^4_4$ (note that it could happen that $z=v_3$). Note that $u$ and $x_3$ have no neighbours in $D^3_3$, and that the only neighbour of  $v$ in $D^3_3$ is $v_2$. Therefore, as $k=3$, this implies that $v_2$ is the unique vertex of $D^3_3$. Let us now examine the cardinality of $D^4_4$. By Proposition~\ref{p2}(ii), both neighbours of $x_5$, different from $x_4$, are in $D^4_4$, and so $|D^4_4| \ge 2$. On the other hand, if $v_2$ has two neighbours in $D^4_4$, then $w x_0$ is not balanced, and so $v_3$ is the unique neighbour of $v_2$ in $D^4_4$. As $x_4$ has exactly one neighbour in $D^4_4$ (namely $z$), this shows that $|D^4_4| = 2$ and that $v_3 \ne z$. 
But as $\G$ is a cubic graph, it must have an even order. Then, there exists a vertex $t$ in $D^5_5$. Note that $t$ is not adjacent to $x_5$, and so it must be adjacent to at least one of $z,v_3$. However, if $t$ is adjacent to $z$, then $x_2 x_1$ is not balanced, while if it is adjacent to $v_3$, then $w x_0$ is not balanced. This shows that $\ell \ne 3$}

Assume finally that $\ell=2$. By Proposition~\ref{p1}(i), vertex $x_4$ has a neighbour $z \in D^4_4$. Also by Proposition~\ref{p1}(i), vertex $x_3$ either has a neighbour in $D^3_3$, or is adjacent with $u$. Assume first that $x_3$ is adjacent with $u$. Note that in this case $x_2\not\sim u$ (otherwise edge $x_2u$ is not balanced) and $\{x_4, x_3, x_2, x_1, x_0, u\} = W_{x_4, x_5}$. It follows that $x_2$ cannot have a neighbour in $D^2_1$ (otherwise the edge $x_4x_5$ is not balanced) and so $x_2$ has a neighbour $v \in D^2_2$. Now if $v$ has a neighbour $v_2 \in D^3_3$, then $\{x_2, x_3, x_4, x_5, z,v, v_2\} \subseteq W_{x_2, x_1}$, a contradiction. Therefore $v$ has no neighbours in $D^3_3$, implying that $d(x_5,v)=4$. But this forces $v \in W_{x_4, x_5}$, a contradiction. Thus $x_3\not\sim u$, and it follows that $x_3$ has a neighbour $v_2 \in D^3_3$. As $\{x_2, x_3, x_4, x_5, v_2,z\}=W_{x_2, x_1}$, vertex $x_2$ has no neighbours in $D^2_1 \cup D^2_2$, implying that $x_2$ is adjacent to $u$. Since $W_{x_4, x_5}=\{x_4,x_3,x_2,x_1,x_0,u\}$, vertex $z$ is adjacent to $x_5$, and vertices $v_2$ and $x_5$ have a common neighbour in $D^4_4$. Now, since $x_1 x_2$ is balanced we have that this common neighbour is in fact $z$, and so $z$ is adjacent to $v_2$. Now consider the edge $v_2 z$. Note that $\{x_1, x_2, x_3, v_2\} \subseteq W_{v_2, z}$. As $d(x_0,v_2)=3$, there exist vertices $y_1, y_2$, such that $x_0, y_1, y_2, v_2$ is a path of length $3$ between $x_0$ and $v_2$. Observe that $\{x_0, y_1, y_2, v_2\} \subseteq W_{v_2, z}$. As $\{x_1, x_2, x_3\} \cap \{x_0, y_1, y_2\}=\emptyset$, we have that $|W_{v_2,z}| \ge 7$, a contradiction.
\end{proof}


\subsection{Case $d=4$ is not possible}

Let $\G$ denote a regular NDB graph with valency $k=3$, diameter $d \ge 3$ and $\gamma=\gamma(\G)=d+1$.  We now consider the case $d=4$. Our main result in this subsection is to prove that this case is not possible. For the rest of this subsection pick arbitrary vertices $x_0, x_4$ of $\G$ such that $d(x_0,x_4)=4$. Pick a shortest path $x_0, x_1, x_2, x_3, x_4$ between $x_0$ and $x_4$. Let $D^i_j=D^i_j(x_1, x_0)$ and let $\ell=\ell(x_0, x_1)$. Let $u$ denote the unique vertex of $D^{\ell-1}_\ell \setminus \{x_\ell\}$.

\begin{proposition}\label{ell4d4}
	Let $\G$ denote a regular NDB graph with valency $k=3$, diameter $d =4$ and $\gamma=\gamma(\G)=d+1=5$. With the notation above, we have that $\ell \ne 4$. 
\end{proposition}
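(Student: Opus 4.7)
The plan is to assume $\ell=4$ and derive a contradiction by combining Proposition~\ref{p2}(ii) with a balance analysis of the edge $x_3 u$. Since $k = 3$ and $\ell \ge 3$, the opening arguments from the proof of Proposition~\ref{prop:k3dne5} transfer verbatim: $D^1_1 = \{w\}$, $D^2_1 = \{y_2\}$, the vertex $w$ is adjacent to neither $x_2$ nor $y_2$, and the third neighbour $v$ of $w$ lies in $D^2_2$. The balance identity \eqref{er} applied to $x_0 x_1$ gives $|D^0_1| + |D^1_2| + |D^2_3| + |D^3_4| = 5$ with $|D^0_1| = |D^1_2| = 1$ and $|D^3_4| = 2$, so $|D^2_3| = 1$ and hence $D^2_3 = \{x_3\}$. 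Since $d(x_1, u) = 3$ and Lemma~\ref{pr}(iii) forces $\G(u) \subseteq D^2_3 \cup D^3_3 \cup D^3_4 \cup D^4_3 \cup D^4_4$, the penultimate vertex of any shortest $x_1$-$u$ path must lie in $D^2_3 = \{x_3\}$; in particular $u \sim x_3$, whence $\G(x_3) = \{x_2, x_4, u\}$ and $\G(x_3) \cap D^3_3 = \emptyset$.

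Proposition~\ref{p2}(ii) applied with $\ell = 4$ next yields
\[
  \bigl| A \cup B \cup \bigl(\G(u) \cap (D^4_4 \cup D^4_3)\bigr) \bigr| = 1.
\]
Let $z$ denote the third neighbour of $x_2$. Since $x_2 \not\sim w$, we have $z \in \{y_2\} \cup D^2_2$, so $z \in A \cup B$; as the displayed union has exactly one element, $z$ must be it. Consequently $\G(x_4) \cap (D^4_3 \cup D^4_4) = \emptyset$ and $\G(u) \cap (D^4_3 \cup D^4_4) = \emptyset$, meaning that the two neighbours of $x_4$ different from $x_3$ are confined to $\{u\} \cup D^3_3$, and the two neighbours of $u$ different from $x_3$ are confined to $\{x_4\} \cup D^3_3$.

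The last step is to examine the edge $x_3 u$ in both possibilities for whether $u \sim x_4$. In each case $\{x_0, x_1, x_2, x_3\} \subseteq W_{x_3, u}$, which already gives four vertices. If $u \sim x_4$, the vertex $x_4$ is equidistant from $x_3$ and $u$, so exactly one additional element must appear in $W_{x_3, u}$ to reach cardinality $\gamma = 5$; an enumeration of candidates using Lemma~\ref{pr} and the confinement above, together with Proposition~\ref{p1}(i) ruling out $x_3$-neighbours in $D^3_2 \cup D^2_2$, eliminates every possibility and produces a contradiction. If $u \not\sim x_4$, then $x_4 \in W_{x_3, u}$ already exhausts the quota of five, so $W_{x_3, u}$ receives no further contributions; but then the symmetric analysis of $W_{u, x_3}$ — which contains $u$ and the two $D^3_3$-neighbours of $u$ plus exactly two more vertices to reach cardinality five — forces these two extra vertices to belong to $\G(u) \cap (D^4_3 \cup D^4_4)$, contradicting the previous paragraph.

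The main obstacle is the case analysis in the final step, specifically exhibiting the nonexistence of the ``extra'' vertex of $W_{x_3, u}$ when $u \sim x_4$: every candidate (a neighbour of $x_2$, a neighbour of $z$, or a vertex of $D^3_3$ adjacent to the right endpoints) must be shown either to violate the confinement established in the second paragraph or to spoil the balance of an auxiliary edge such as $w v$ or $x_2 z$. Running through these possibilities requires careful use of Lemma~\ref{pr}(ii) for vertices in $D^2_2$ and $D^3_3$, together with Lemma~\ref{eq} applied to the edges $wv$ and $x_2 z$ to control the symmetric sizes of their relevant $D$-sets.
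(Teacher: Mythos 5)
Your first two paragraphs are correct and essentially parallel the paper's opening moves: the identification $D^2_3=\{x_3\}$, the forced adjacency $u\sim x_3$, and the use of Proposition~\ref{p2}(ii) to make $z$ the unique element of the displayed union, confining the remaining neighbours of $x_4$ and of $u$ to $\{u\}\cup D^3_3$ and $\{x_4\}\cup D^3_3$ respectively. The genuine gap is the final step, which carries all the content of the proof and which you have only sketched; worse, the strategy you announce for it cannot succeed. In the case $u\sim x_4$ you propose to derive a contradiction by showing that \emph{no} vertex can be the fifth element of $W_{x_3,u}$. But such a vertex exists: once $z\in D^2_2$ and $z$ has no neighbour in $D^3_3$ (both of which are forced, the latter because otherwise $x_2x_1$ is not balanced), one computes $d(z,x_3)=2$ while $d(z,u)=3$ (every neighbour of $u$ is $x_3$, $x_4$ or a vertex of $D^3_3$, and none of these is adjacent to $z$), so $z\in W_{x_3,u}$ and $W_{x_3,u}=\{x_0,x_1,x_2,x_3,z\}$ is perfectly balanced. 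No contradiction is available at this point; the paper instead exploits this equality (together with the analogous one for $W_{x_3,x_4}$) to deduce $d(w,u)=d(w,x_4)=3$, hence that $w$'s neighbour $v\in D^2_2$ is adjacent to both the $D^3_3$-neighbour of $u$ and the $D^3_3$-neighbour of $x_4$, and only then obtains the contradiction from the edge $wx_0$, whose set $W_{w,x_0}$ then contains the six vertices $w,v,z_1,z_2,u,x_4$.

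The subcase $u\not\sim x_4$ is also flawed: you claim that the two elements of $W_{u,x_3}$ beyond $u$ and its two $D^3_3$-neighbours must lie in $\G(u)\cap(D^4_3\cup D^4_4)$, but membership in $W_{u,x_3}$ does not require adjacency to $u$ --- any vertex at distance $t$ from $u$ and $t+1$ from $x_3$ (for instance a vertex of $D^4_4$ adjacent to a $D^3_3$-neighbour of $u$) qualifies. The paper handles this case by noting that $W_{x_3,u}=\{x_0,\dots,x_4\}$ forces $z\in D^2_2$ with $d(z,u)=2$, which yields a common neighbour $z_1\in D^3_3$ of $z$ and $u$ and hence the six vertices $x_2,x_3,x_4,u,z,z_1$ in $W_{x_2,x_1}$. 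Finally, note that you never exclude the possibility $z=y_2\in D^2_1$, which the paper rules out separately by exhibiting eight vertices in $W_{x_2,x_1}$.
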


\begin{proof}
	Assume to the contrary that $\ell=4$. Note that in this case since $k=3$ and $|D^1_2|=|D^2_1|=1$ we have $|D^1_1|=1$.  Let $w$ denote the unique vertex of $D^1_1$, and let $z$ denote the neighbour of $x_2$, different from $x_1$ and $x_3$. Observe that $z \ne w$, as otherwise $x_1 w$ is not balanced. Similarly, $w$ is not adjacent to the unique vertex $y_2$ of $D^2_1$. Observe also that $\left\lbrace x_0, x_1, x_2, x_3\right\rbrace \subseteq W_{x_3, u} $. We claim that $u \in \G(x_4)$. To prove this, suppose that $x_4$ and $u$ are not adjacent. Then $x_4 \in W_{x_3, u}$, and so $z$ is contained in $D^2_2$. Observe that $d(z,u)=2$, otherwise $x_3 u$ is not balanced. Therefore, $u$ and $z$ must have a common neighbour $z_1$ and it is clear that $z_1 \in D_3^3$. But now $\{x_2, x_3, x_4, u, z, z_1\} \subseteq W_{x_2, x_1}$, a contradiction. This proves our claim that $u\sim z$. 
	
	 Suppose now that $z=y_2$. Then $D^3_2 \cup D^4_3 \cup \left\lbrace u, x_2, x_3, x_4,y_2 \right\rbrace \subseteq W_{x_2, x_1}$. Note that by the NDB condition we have $|D^3_2\cup D^4_3|=3$, and so $x_2 x_1$ is not balanced, a contradiction. We therefore have that $z \in D^2_2$. 
	 
	 By Proposition~\ref{p2}(ii) it follows that $u$ and $x_4$
	 have a neighbour $z_1$ and $z_2$ in $D^3_3$, respectively. We observe $z_1 \ne z_2$, as otherwise $x_4 u$ is not balanced. Note that $z$ has no neighbours in $D^3_3$, as otherwise $x_2 x_1$ is not balanced. Therefore, $z $ is not adjacent to any of $z_1, z_2$, which gives us $W_{x_3, x_4} = W_{x_3, u} = \{x_3, x_2, x_1, x_0,z\}$. Consequently, $d(w,u)=d(w,x_4)=3$, and so the (unique) neighbour of $w$ in $D^2_2$ is adjacent to both $z_1$ and $z_2$. But this implies that $wx_0$ is not balanced, a contradiction.
\end{proof}

\begin{proposition}\label{ell3d4}
	Let $\G$ denote a regular NDB graph with valency $k=3$, diameter $d =4$ and $\gamma=\gamma(\G)=d+1=5$. With the notation above, we have that $\ell \ne 3$. 
\end{proposition}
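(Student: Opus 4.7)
The plan is to argue by contradiction: assume $\ell=3$ and show that no $3$-regular NDB graph of diameter $4$ with $\gamma=5$ can support this configuration.

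First I would pin down the local structure. Since $\ell=3$ one has $D^{1}_{2}=\{x_{2}\}$, $D^{2}_{3}=\{x_{3},u\}$ and $D^{3}_{4}=\{x_{4}\}$. With $k=3$, Lemma~\ref{eq} forces $|D^{1}_{1}|=|D^{2}_{1}|=1$; call their unique elements $w$ and $y_{2}$. The observation in Section~\ref{sec:struc} that $u\in W_{x_{2},x_{1}}$ whenever $\ell\ge 3$, together with $d(u,x_{1})=2$, gives $d(u,x_{2})=1$, so $u\sim x_{2}$ and $\G(x_{2})=\{x_{1},x_{3},u\}$. Proposition~\ref{p1}(i) with $i=3$ then forbids neighbours of $x_{3}$ in $D^{2}_{2}\cup D^{3}_{2}$, so the third neighbour of $x_{3}$ lies in $\{u\}\cup D^{3}_{3}$. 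Proposition~\ref{p2}(ii) supplies the key bound $|A\cup B\cup (\G(u)\cap (D^{3}_{3}\cup D^{3}_{2}))|=1$, hence $|A\cup B|\le 1$.

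Next I would split into two cases. \emph{Case $x_{3}\sim u$:} the third neighbour $t$ of $u$ lies in $\{x_{4}\}\cup D^{2}_{2}\cup D^{3}_{2}\cup D^{3}_{3}$ by Lemma~\ref{pr}. Every vertex of the geodesic $x_{0},\dots,x_{4}$ and the common neighbours $x_{2},x_{4}$ of $u$ and $x_{3}$ are equidistant from $u$ and $x_{3}$, so $|W_{u,x_{3}}|=5$ would require four additional off-geodesic vertices closer to $u$ than to $x_{3}$. I would rule out each of the four placements of $t$ (and, independently, the placement of $x_{4}$'s extra neighbour in $D^{3}_{2}\cup D^{3}_{3}\cup D^{4}_{3}\cup D^{4}_{4}$) by combining $|A\cup B|\le 1$ with the identity $|D^{3}_{2}|+|D^{4}_{3}|=3$ coming from balancedness of the edge $x_{0}x_{1}$, arriving each time at too few vertices in $W_{u,x_{3}}$ or too many in $W_{x_{3},u}$. \emph{Case $x_{3}\not\sim u$:} here $x_{3}$'s third neighbour $v_{3}$ lies in $D^{3}_{3}\subseteq A$, so the constraint gives $A\cup B=\{v_{3}\}$ and $\G(u)\cap(D^{3}_{3}\cup D^{3}_{2})\subseteq\{v_{3}\}$. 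Consequently the two remaining neighbours of $u$ lie in $\{x_{4},v_{3}\}\cup D^{2}_{2}$, and the two non-$x_{3}$ neighbours of $x_{4}$ lie in $D^{3}_{2}\cup D^{3}_{3}$ (any neighbour in $D^{4}_{3}\cup D^{4}_{4}$ would lie in $A\cup B\setminus\{v_{3}\}$). I would then examine the four subcases according to whether $u\sim x_{4}$ and whether $u$'s remaining neighbour is $v_{3}$ or a vertex of $D^{2}_{2}$; in each subcase an edge incident with $u$ (or with $w$, in the last subcase) can be shown to violate $\gamma=5$ by exhibiting at least six vertices in one of the corresponding $W$-sets.

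The main obstacle will be the subcase $u\sim x_{4}$ and $u\sim v_{3}$ inside the second case: through $u$ there are now very short paths connecting distant layers, and the distances from $w$ and $y_{2}$ to the middle of the graph have to be pinned down before the balancedness overshoot can be exhibited. As in Proposition~\ref{ell4d4}, the unifying principle is to combine a long geodesic segment automatically forced into some $W_{p,q}$ with one or two additional forced off-geodesic vertices (typically in $D^{3}_{3}$ or $D^{4}_{4}$) and then derive $|W_{p,q}|\ge 6$.
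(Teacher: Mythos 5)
Your setup and case division (on whether $x_3 \sim u$) coincide with the paper's, and the tools you cite --- Proposition~\ref{p1}(i), Proposition~\ref{p2}(ii), the forced adjacency $u \sim x_2$, and the count $|D^3_2|+|D^4_3|=3$ from the balancedness of $x_0x_1$ --- are the right opening moves. But there is a genuine gap: the claim that every placement of the third neighbour of $u$ (respectively of $x_3$) can be eliminated by a single overshoot/undershoot count on $W_{u,x_3}$, $W_{x_3,u}$ or a similar pair is not true. In the case $x_3\sim u$, balancedness of $ux_3$ only rules out $u\sim x_4$; the remaining configurations are locally consistent, and one has to build a long chain of forced structure before any $W$-set violates $\gamma=5$. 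Three ingredients missing from your plan are essential there. First, you never pin down the $x_0$-side layers: applying Proposition~\ref{ell4d4} to the reversed geodesic gives $|D^4_3|\ne 2$, while $k=3$ rules out $|D^4_3|=0$ (every vertex of $D^3_2$ must be adjacent to $y_2$), so $|D^4_3|=1$ and $|D^3_2|=2$; this in turn forces $w$ to have a unique neighbour $v\in D^2_2$ with $D^2_2=\{v\}$, and that uniqueness is what converts statements such as $d(w,x_4)=3$ into concrete adjacencies inside $D^3_3$. Second, the argument re-applies the $\ell=3$ analysis to the reversed path $x_1,x_0,y_2,y_3,y_4$ to transfer conclusions to the vertices $u_1,y_3,y_4$. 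Third, and most importantly, the terminal contradiction both in the case $x_3\sim u$ and in the subcase you flag as the main obstacle comes from a parity argument: a cubic graph has even order, which forces $D^4_4\neq\emptyset$ in the configuration reached, and every possible attachment of a vertex of $D^4_4$ unbalances some edge. Without these steps the ``at least six vertices in some $W_{p,q}$'' you promise simply do not materialize in several subcases.

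A smaller slip: the non-$x_3$ neighbours of $x_4\in D^3_4$ lie in $D^2_3\cup D^3_3\cup D^4_3\cup D^4_4$, not in $D^3_2$; vertices of $D^3_2$ are at distance $2$ from $x_0$ and hence cannot be adjacent to $x_4$.
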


\begin{proof}
	Suppose that $\ell=3$. By Lemma \ref{eq} we have $|D^2_1|=1$, and since $k=3$ also $|D^1_1|=1$. Let $w$ and $y_2$  denote the unique vertex of $D^1_1$ and $D^2_1$, respectively. Since $\gamma=5$, $y_2$ has at least one neighbour $y_3$ in $D^3_2$, and $|D^4_3| \le 2$. If $D^4_3=\emptyset$, then there are three vertices in $D^3_2$, which are all adjacent to $y_2$, contradicting $k=3$. By  Proposition \ref{ell4d4} we have that $|D^4_3| \ne 2$, and so $|D^4_3| =1$, $|D^3_2| = 2$. Let $y_4$ denote the unique element of $D^4_3$ and let $u_1$ denote the unique element of $D^3_2 \setminus \{y_3\}$. Without loss of generality assume that $y_4$ and $y_3$ are adjacent. Observe that $\G(y_2)=\{x_0,y_3,u_1\}$, and so $w$ has a neighbour $v \in D^2_2$, and it is easy to see that $v$ is the unique vertex of $D^2_2$ (see Figure~\ref{07}(a)). By Proposition~\ref{p1}(i) we find that  either $x_3 \in \G(u)$, or $x_3$ has a neighbour in $D^3_3$. 
	
	{\sc Case 1:} there exists $z \in \G(x_3)\cap D^3_3$. Note that in this case we have $W_{x_2, x_1} = \{x_2, x_3, x_4, u, z\}$. We split our analysis into two subcases.
	
	{\sc Subcase 1.1:} vertices $u$ and $x_4$ are not adjacent. As $x_2x_1$ is balanced and as $v$ is the unique vertex of $D^2_2$, this forces that $u$ is adjacent with $v$ and $z$. As every vertex in $D^3_3$ is at distance $3$ from $x_1$ and as vertices $u$, $x_3$ already have three neighbours each, this imply that beside $z$ there is at most one more vertex in $D^3_3$ (which must be adjacent with $v$). But this shows that $x_4$ could have at most one neighbour in $D^3_3$ (observe that $z$ could not be adjacent with $x_4$, as otherwise $z$ is not at distance $3$ from $x_0$), and consequently $x_4$ has at least one neighbour in $D^4_4 \cup D^4_3$. But now $x_2x_1$ is not balanced, a contradiction.
	
	{\sc Subcase 1.2:} {vertices $u$ and $x_4$ are adjacent. By Proposition~\ref{p2}(ii), vertex $u$ is either adjacent to $v \in D^2_2$ or to $z \in D^3_3$. If $u$ is adjacent to $v$, then $\{ x_0, x_1, x_2, u, v, w \} \subseteq W_{u, x_4}$, a contradiction.  This shows that $u \sim z$. Note that the third neighbour of $z$ is one of the vertices $v,y_3, u_1$, and so $z$ and $x_4$ are not adjacent. Consequently, $W_{x_3, x_4}=\{x_3, x_2, x_1, x_0, z\}$, and so $w$ must be at distance $3$ from $x_4$. Therefore, $v$ and $x_4$ have a common neighbour $v_1 \in D^3_3$. Note that $v_1 \ne z$ as $z$ and $x_4$ are not adjacent. Every vertex in $D^3_3$, different from $z$ and $v_1$, must be adjacent with $v$ in order to be at distance $3$ from $x_1$. This shows that $|D^3_3| \le 3$. If there exists vertex $v_2 \in D^3_3$, which is different from $z$ and $v_1$, then there must be a vertex $t \in D^4_4$ (recall that $\G$ is of even order). As $t$ could not be adjacent with $x_4$, it must be adjacent with at least one of $v_1, v_2$. However, this is not possible (note that in this case $\{w,v,v_1,v_2,x_4,t\} \subseteq W_{w,x_0}$, a contradiction). Therefore, $D^3_3 = \{z,v_1\}$ and $D^4_4 = \emptyset$. It follows that $y_4$ is adjacent with $v_1$ and $u_1$. If $z$ and $v$ are adjacent, then $W_{x_1,w}=\{x_1,x_2,u,x_3\}$, contradicting $\gamma=5$. Therefore, $z$ is adjacent to either $y_3$ or $u_1$.  This shows that either $y_3$ or $u_1$ is contained in $W_{x_3, x_4}=\{x_3, x_2, x_1, x_0, z\}$, a contradiction. }

{\small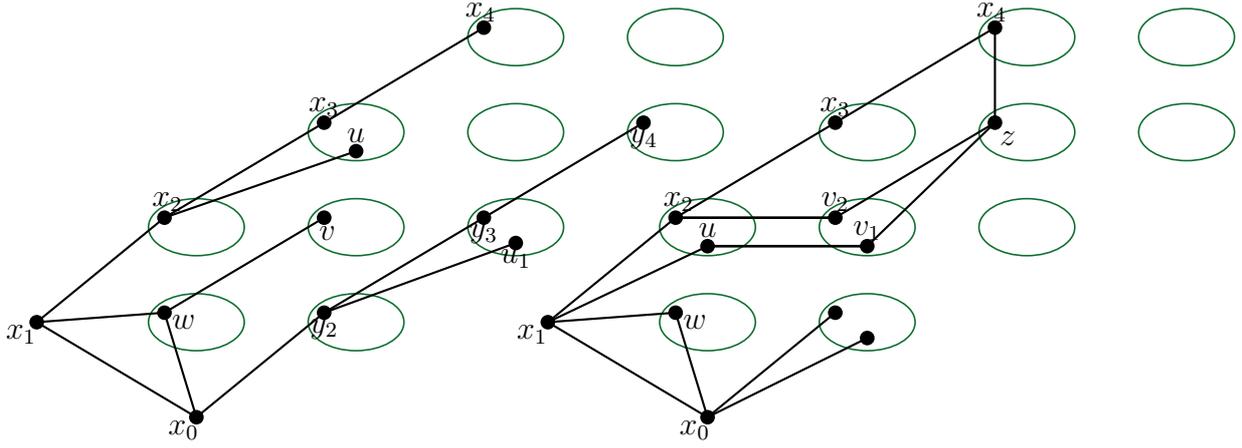
\begin{figure}[t]{\rm
\begin{center}
\begin{tikzpicture}[scale=.42]
\draw[fill=white, draw=ForestGreen, line width=0.6pt] (13.,2.) ellipse (1.5cm and .9cm);
\draw[fill=white, draw=ForestGreen, line width=0.6pt] (18.,2.) ellipse (1.5cm and .9cm);
\draw[fill=white, draw=ForestGreen, line width=0.6pt] (8.,-1.) ellipse (1.5cm and .9cm);
\draw[fill=white, draw=ForestGreen, line width=0.6pt] (13.,-1.) ellipse (1.5cm and .9cm);
\draw[fill=white, draw=ForestGreen, line width=0.6pt] (18.,-1.) ellipse (1.5cm and .9cm);
\draw[fill=white, draw=ForestGreen, line width=0.6pt] (8.,-4.) ellipse (1.5cm and .9cm);
\draw[fill=white, draw=ForestGreen, line width=0.6pt] (13.,-4.) ellipse (1.5cm and .9cm);
\draw[fill=white, draw=ForestGreen, line width=0.6pt] (8.,-7.) ellipse (1.5cm and .9cm);
\draw[fill=white, draw=ForestGreen, line width=0.6pt] (3.,-7.) ellipse (1.5cm and .9cm);
\draw[fill=white, draw=ForestGreen, line width=0.6pt] (3.,-4.) ellipse (1.5cm and .9cm);

\fill (-2,-7) circle [radius=0.23]; 
\fill (3,-10) circle [radius=0.23];

\node at (-2.1,-7,1) {\normalsize $x_1$};
\node at (2.6,-10.4) {\normalsize $x_0$};

\draw [line width=0.8pt, draw=black] (-2,-7)-- (2,-3.7);
\draw [line width=0.8pt, draw=black] (7,-0.7)-- (2,-3.7);
\draw [line width=0.8pt, draw=black] (8,-1.6)-- (2,-3.7);
\draw [line width=0.8pt, draw=black] (7,-0.7)-- (12,2.3);

\draw [line width=0.8pt, draw=black] (3,-10)-- (7,-6.7);
\draw [line width=0.8pt, draw=black] (3,-10)-- (2,-6.7);
\draw [line width=0.8pt, draw=black] (-2,-7)-- (2,-6.7);
\draw [line width=0.8pt, draw=black] (12,-3.7)-- (7,-6.7);
\draw [line width=0.8pt, draw=black] (2,-6.7)-- (7,-3.7);
\draw [line width=0.8pt, draw=black] (13,-4.5)-- (7,-6.7);
\draw [line width=0.8pt, draw=black] (12,-3.7)-- (17,-0.7);
\draw [line width=0.8pt, draw=black] (3,-10)-- (-2,-7);

\fill (7,-0.7) circle [radius=0.23]; 
\node at (7,-0.2) {\normalsize $x_3$};

\fill (8,-1.6) circle [radius=0.23]; 
\node at (8,-1.1) {\normalsize $u$};

\fill (12,2.3) circle [radius=0.23];  
\node at (11.9,2.8) {\normalsize $x_4$};

\fill (2,-3.7) circle [radius=0.23]; 
\node at (2.1,-3.2) {\normalsize $x_2$};

\fill (2,-6.7) circle [radius=0.23]; 
\node at (2.6,-7) {\normalsize $w$};

\fill (7,-3.7) circle [radius=0.23]; 
\node at (7.1,-4.2) {\normalsize $v$};

\fill (7,-6.7) circle [radius=0.23]; 
\node at (7,-7.2) {\normalsize $y_2$};

\fill (12,-3.7) circle [radius=0.23]; 
\node at (12,-4.2) {\normalsize $y_3$};

\fill (13,-4.5) circle [radius=0.23]; 
\node at (13,-5) {\normalsize $u_1$};

\fill (17,-0.7) circle [radius=0.23]; 
\node at (17,-1.2) {\normalsize $y_4$};

\end{tikzpicture}~\hspace{-30mm}\begin{tikzpicture}[scale=.42]
\draw[fill=white, draw=ForestGreen, line width=0.6pt] (13.,2.) ellipse (1.5cm and .9cm);
\draw[fill=white, draw=ForestGreen, line width=0.6pt] (18.,2.) ellipse (1.5cm and .9cm);
\draw[fill=white, draw=ForestGreen, line width=0.6pt] (8.,-1.) ellipse (1.5cm and .9cm);
\draw[fill=white, draw=ForestGreen, line width=0.6pt] (13.,-1.) ellipse (1.5cm and .9cm);
\draw[fill=white, draw=ForestGreen, line width=0.6pt] (18.,-1.) ellipse (1.5cm and .9cm);
\draw[fill=white, draw=ForestGreen, line width=0.6pt] (8.,-4.) ellipse (1.5cm and .9cm);
\draw[fill=white, draw=ForestGreen, line width=0.6pt] (13.,-4.) ellipse (1.5cm and .9cm);
\draw[fill=white, draw=ForestGreen, line width=0.6pt] (8.,-7.) ellipse (1.5cm and .9cm);
\draw[fill=white, draw=ForestGreen, line width=0.6pt] (3.,-7.) ellipse (1.5cm and .9cm);
\draw[fill=white, draw=ForestGreen, line width=0.6pt] (3.,-4.) ellipse (1.5cm and .9cm);

\draw [line width=0.8pt, draw=black] (3,-10)-- (7,-6.7);
\draw [line width=0.8pt, draw=black] (3,-10)-- (8,-7.5);
\draw [line width=0.8pt, draw=black] (3,-10)-- (2,-6.7);
\draw [line width=0.8pt, draw=black] (-2,-7)-- (2,-6.7);
\draw [line width=0.8pt, draw=black] (3,-10)-- (-2,-7);

\draw [line width=0.8pt, draw=black] (-2,-7)-- (2,-3.7);
\draw [line width=0.8pt, draw=black] (7,-0.7)-- (2,-3.7);
\draw [line width=0.8pt, draw=black] (-2,-7)-- (3,-4.6);
\draw [line width=0.8pt, draw=black] (7,-0.7)-- (12,2.3);
\draw [line width=0.8pt, draw=black] (12,-0.7)-- (8,-4.6);
\draw [line width=0.8pt, draw=black] (12,-0.7)-- (7,-3.7);
\draw [line width=0.8pt, draw=black] (12,-0.7)-- (12,2.3);

\fill (2,-6.7) circle [radius=0.23]; 
\node at (2.6,-7) {\normalsize $w$};

\fill (7,-6.7) circle [radius=0.23]; 
\fill (8,-7.5) circle [radius=0.23]; 

\fill (-2,-7) circle [radius=0.23];
\fill (3,-10) circle [radius=0.23];

\node at (-2.1,-7,1) {\normalsize $x_1$};
\node at (2.6,-10.4) {\normalsize $x_0$};

\fill (7,-0.7) circle [radius=0.23]; 
\node at (7,-0.2) {\normalsize $x_3$};

\fill (3,-4.6) circle [radius=0.23]; 
\node at (3,-4.1) {\normalsize $u$};
\fill (8,-4.6) circle [radius=0.23]; 
\node at (8,-4.1) {\normalsize $v_1$};
\draw [line width=1pt, draw=black] (3,-4.6)-- (8,-4.6);

\fill (12,2.3) circle [radius=0.23];  
\node at (11.9,2.8) {\normalsize $x_4$};
\fill (12,-0.7) circle [radius=0.23];  
\node at (12.4,-1.2) {\normalsize $z$};

\fill (2,-3.7) circle [radius=0.23]; 
\node at (2.1,-3.2) {\normalsize $x_2$};
\fill (7,-3.7) circle [radius=0.23];  
\node at (7,-3.2) {\normalsize $v_2$};
\draw [line width=1pt, draw=black] (2,-3.7)-- (7,-3.7);

\end{tikzpicture}
\caption{\rm 
(a) Case $d=4$, $k=3$ and $\ell=3$ (left).
(b) Case $d=4$, $k=4$ and $\ell=2$ (right).
}
\label{07}
\end{center}
}\end{figure}}


{\sc Case 2:} $x_3$ and $u$ are adjacent. Observe that $x_4 \notin \G(u)$, otherwise $u x_3$ is not balanced. It follows that $W_{x_3, x_4}=\{x_3, x_2, x_1, x_0,u\}$, and so $d(w,x_4)=3$. Therefore there exists a common neighbour $z$ of $x_4$ and $v$, and note that $z \in D^3_3$. Reversing the roles of the paths $x_0, x_1, x_2, x_3, x_4$ and $x_1, x_0, y_2, y_3, y_4$, we get that $u_1$ and $y_3$ are adjacent, and that $y_4 \notin \G(u_1)$.  As $|W_{x_1, w}|=5$, vertex $u$ must have a neighbour, which is at distance $3$ from $x_1$ and at distance $4$ from $w$. As $x_4, y_3$ and $u_1$ ar all at distance $3$ from $w$, this implies that $u$ has a neighbour $z_1 \in D^3_3$, which is not adjacent with $v$ (and is therefore different from $z$). Note that since $z_1$ is at distance $3$ from $x_0$, it is adjacent with $u_1$. As $k=3$, $v$ has a neighbour $z_2 \ne z$ in $D^3_3$. Pick now a vertex $t \in D^4_4$ (observe that $D^4_4 \ne \emptyset$ as $\G$ has even order). If $t$ is adjacent with $x_4$ or with $z_1$, then $t \in W_{x_2, x_1} = \{x_2, x_3, x_4, u, z_1\}$, a contradiction. If $t$ is adjacent with $z$ or $z_2$, then $t \in W_{w,x_0}=\{w,v,z,z_2,x_4\}$, a contradiction. This finally proves that $\ell \ne 3$. 
\end{proof}

\begin{proposition}\label{trianfree}
	Let $\G$ denote a regular NDB graph with valency $k=3$, diameter $d =4$ and $\gamma=\gamma(\G)=d+1=5$. With the notation above,  $\G$ is triangle-free.  
\end{proposition}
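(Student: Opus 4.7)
The plan is to show that for every edge $uv$ of $\G$, we have $|D^{1}_1(u,v)| = 0$, which is equivalent to $\G$ being triangle-free. Since $\G$ is $3$-regular, the identity $|D^{1}_1(u,v)| + |D^{1}_2(u,v)| = k-1 = 2$ gives $|D^{1}_1(u,v)| \in \{0,1,2\}$, so I would rule out the values $2$ and $1$ in turn.

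The case $|D^{1}_1(u,v)| = 2$ is handled by a self-contained short argument. Here $\G(u) = \{v,w_1,w_2\}$ and $\G(v) = \{u,w_1,w_2\}$, so $|D^{1}_2(u,v)| = 0$ and \eqref{er} yields $|D^{2}_3(u,v)| + |D^{3}_4(u,v)| = 4$. For any $p\in D^{3}_4(u,v)$, a shortest $(u,p)$-path $u,y_1,y_2,p$ must have $y_1 \ne v$ (otherwise $d(v,p)\le 2$), hence $y_1\in\{w_1,w_2\}$; since $w_i \sim v$ the triangle inequality forces $d(v,p)\le 1 + d(w_i,p) = 3$, contradicting $d(v,p)=4$. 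Therefore $D^{3}_4(u,v)=\emptyset$ and $|D^{2}_3(u,v)| = 4$. But $\G_2(u)$ is contained in the at most two-element set of third neighbours of $w_1$ and $w_2$ (the subcase in which $\{u,v,w_1,w_2\}$ induces $K_4$ even forces $\G\cong K_4$ and hence $d=1$), giving $|D^{2}_3(u,v)|\le|\G_2(u)|\le 2$, a contradiction.

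The case $|D^{1}_1(u,v)|=1$ is the main obstacle. Writing $\G(u)=\{v,w,u'\}$ and $\G(v)=\{u,w,v'\}$ with $u'\ne v'$, we have $|D^{1}_2(u,v)|=|D^{2}_1(u,v)|=1$ and $|D^{2}_3(u,v)|+|D^{3}_4(u,v)|=3$. The same forcing argument as above shows that any $p\in D^{3}_4(u,v)$ has its shortest $(u,p)$-path start $u,u',y_2,p$ (the alternatives $y_1=v$ and $y_1=w$ both give $d(v,p)\le 3$), so $y_2\in D^{2}_3(u,v)\cap\G(u')$. Proposition~\ref{p1}(iii) applied to $u'\in D^{1}_2(u,v)$ confines $\G(u')$ to $\{u,v'\}\cup D^{2}_2(u,v)\cup D^{2}_3(u,v)$ and yields $|D^{2}_3(u,v)|\le 2$ together with $|D^{3}_4(u,v)|\le 2|D^{2}_3(u,v)|$. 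Combining this with the symmetric estimates for $D^{3}_2(u,v)$ and $D^{4}_3(u,v)$ obtained from the vertex $v'$, a case analysis on $|D^{2}_3(u,v)|\in\{1,2\}$ together with the NDB condition on the companion edges $uu'$, $vv'$, and $ww''$ (where $w''$ is the third neighbour of $w$) produces a contradiction with $\gamma=5$, completing the proof. The main technical difficulty lies in this last combinatorial bookkeeping, which parallels the case analyses already carried out in Propositions~\ref{ell4d4} and~\ref{ell3d4} and must simultaneously track the $D^{i}_j$ partitions with respect to several edges of $\G$.
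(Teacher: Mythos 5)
Your proposal is incomplete: the case $|D^1_1(u,v)|=1$, which is the heart of the proposition, is not actually proved. You reduce it to ``a case analysis on $|D^2_3(u,v)|\in\{1,2\}$ together with the NDB condition on the companion edges $uu'$, $vv'$ and $ww''$'' that ``produces a contradiction'', but you never carry out this analysis, and it is precisely the kind of multi-page bookkeeping that Propositions~\ref{ell4d4} and~\ref{ell3d4} required. (A smaller issue: you cite ``Proposition~\ref{p1}(iii)'', which does not exist --- Proposition~\ref{p1} has only parts (i) and (ii); you presumably mean Lemma~\ref{pr}(iii), and even then the claimed confinement of $\G(u')$ to $\{u,v'\}\cup D^2_2\cup D^2_3$ wrongly omits the possibility of a neighbour in $D^1_1$.) Your treatment of the case $|D^1_1(u,v)|=2$ is correct, though it can be shortened: every neighbour of $u$ is then at distance at most $1$ from $v$, which already forces $W_{u,v}=\{u\}$.

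What you are missing is that Propositions~\ref{ell4d4} and~\ref{ell3d4} already do almost all the work. If $D^3_4(u,v)\ne\emptyset$, pick $p\in D^3_4(u,v)$; then $d(v,p)=4$ and there is a geodesic from $v$ to $p$ whose first edge is $vu$, so those two propositions give $\ell(v,u)=2$, i.e.\ $|D^1_2(u,v)|=2$, and with Lemma~\ref{eq} and $k=3$ this forces $|D^1_1(u,v)|=0$ immediately --- in particular $|D^1_1(u,v)|=1$ is impossible whenever $D^3_4$ (or, symmetrically, $D^4_3$) is nonempty. In the remaining case $D^3_4(u,v)=D^4_3(u,v)=\emptyset$, a triangle through $uv$ would give $|D^1_2(u,v)|\le 1$, hence $|D^2_3(u,v)|\le 2$ (each vertex of $D^2_3$ needs a neighbour in $D^1_2$, and a vertex of $D^1_2$ has only two neighbours besides $u$), so $|W_{u,v}|\le 1+1+2=4<5$, a contradiction. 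This two-case argument is the paper's proof; it disposes of exactly the configuration you left open, so as it stands your proposal cannot be accepted as a proof.
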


\begin{proof}
	Pick an edge $xy \in E(\G)$ and let $D^i_j=D^i_j(x,y)$. If either $D^4_3$ or $D^3_4$ is nonempty, then Propositions \ref{ell4d4} and \ref{ell3d4} together with Lemma \ref{eq} imply that $|D^1_2|=|D^2_1|=2$. As $\G$ is $3$-regular, the set $D^1_1$ is empty, and so $xy$ is not contained in any triangle. 
	
	Assume next that $D^4_3=D^3_4=\emptyset$. If the edge $xy$ is contained in a triangle, then $D^1_2$ and $D^2_1$ both contain at most one vertex, and so $D^2_3$ and $D^3_2$ could contain at most two vertices as $\G$ is $3$-regular. We thus have  $|W_{x,y}| \le 4$, contradicting $\gamma=5$. The result follows. 
\end{proof}

\begin{proposition}\label{ell2d4}
	Let $\G$ denote a regular NDB graph with valency $k=3$, diameter $d \ge 3$ and $\gamma=\gamma(\G)=d+1$.  Then $d \ne 4$. 
\end{proposition}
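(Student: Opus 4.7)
I argue by contradiction and assume $d = 4$. Propositions~\ref{ell4d4} and~\ref{ell3d4} rule out $\ell \in \{3, 4\}$, so necessarily $\ell = 2$; Proposition~\ref{trianfree} gives that $\G$ is triangle-free. Consequently $D^1_1 = \emptyset$, $D^1_2 = \{x_2, u\}$, and Lemma~\ref{eq} yields $|D^2_1| = 2$; write $D^2_1 = \{y_1, y_2\}$. The balance identity \eqref{er} then forces $D^2_3 = \{x_3\}$ and $D^3_4 = \{x_4\}$, so $\G(x_0) = \{x_1, y_1, y_2\}$ and $\G(x_1) = \{x_0, x_2, u\}$, and triangle-freeness rules out the edges $x_2 u$, $y_1 y_2$ and $x_1 y_i$. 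Applying Propositions~\ref{ell4d4} and~\ref{ell3d4} also to the reversed diametric path $x_4, x_3, x_2, x_1, x_0$ yields $\ell(x_4, x_3) = 2$ as well, which forces the third neighbour of $x_3$ to lie in $\G_2(x_4)$.

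By Lemma~\ref{pr}(iii) applied at $x_3 \in D^2_3$, combined with Proposition~\ref{p1}(i) and the already determined sets $D^1_2, D^2_3, D^3_4$, we have $\G(x_3) \subseteq \{x_2, u, x_4\} \cup D^3_3$. Hence either the third neighbour of $x_3$ is $u$ itself, or it is some $z \in D^3_3$; this is the main case split. In the first case, Lemma~\ref{pr}(iii) and triangle-freeness force the third neighbour of $u$ (beyond $x_1$ and $x_3$) to lie in $\{y_1, y_2\} \cup D^2_2$, and I will analyse the balance of the edges $x_1 u$ and $x_3 u$ simultaneously and use $\gamma = 5$ together with $k = 3$ to show that one of them cannot have $|W_{\cdot,\cdot}| = 5$. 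In the second case, the reversed-path constraint forces $z$ and $x_4$ to share a common neighbour in $D^3_3 \cup D^4_4$; I will then combine Proposition~\ref{p2}(i) (giving $|A \cup B| \le 2$) with the balance of the edges $x_1 x_2$, $x_2 x_3$ and $x_3 x_4$, together with the parity of $|V(\G)|$ (which must be even since $\G$ is cubic), to rule out every possible assignment of the remaining edges out of $u, y_1, y_2, x_2$ into $D^2_2 \cup D^3_3 \cup D^4_4 \cup D^4_3$.

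The main obstacle is the same intricate combinatorial bookkeeping that dominates Propositions~\ref{prop:k3dne5}, \ref{ell4d4} and~\ref{ell3d4}: several sub-configurations, indexed by which pairs among $u, y_1, y_2, x_2$ are joined by an edge or share a neighbour in $D^2_2$, must be handled individually, and in each the contradiction is reached by exhibiting either a forbidden triangle, an edge $ab$ with $|W_{a,b}| \ne 5$, or a vertex of valency greater than $3$. The global reason why the argument succeeds is that once the ``corridor'' $\{x_0, x_1, \ldots, x_4, u, y_1, y_2\}$ is fixed, triangle-freeness and $k = 3$ leave very little room, and the tight constraint $|W_{a, b}| = d + 1 = 5$ quickly becomes inconsistent.
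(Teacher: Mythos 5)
Your setup is sound and mirrors the paper's: you correctly reduce to $\ell=2$ via Propositions~\ref{ell4d4} and~\ref{ell3d4}, invoke Proposition~\ref{trianfree}, pin down $D^1_2=\{x_2,u\}$, $|D^2_1|=2$, $D^2_3=\{x_3\}$, $D^3_4=\{x_4\}$, and arrive at exactly the case split the paper uses (either $x_3\sim u$ or $x_3$ has a neighbour in $D^3_3$). But from that point on you have not given a proof: both cases are closed with declarations of intent (``I will analyse\dots'', ``I will then combine\dots to rule out every possible assignment'') rather than arguments. For this proposition the case analysis \emph{is} the content, so this is a genuine gap, not a matter of omitted routine detail.

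Beyond incompleteness, your stated plans for closing the cases are doubtful. In Case 1 you claim that balancing only the edges $x_1u$ and $x_3u$ yields the contradiction. In the paper's treatment of this case one must first use the balance of $x_3x_4$ to show $x_2$ and $u$ have no neighbours in $D^2_1$, then locate distinct vertices $w,w_1\in D^2_2$ and, via the balance of $x_2x_3$ and the distance conditions forced by $W_{x_3,x_4}$, distinct vertices $z,z_1\in D^3_3$ adjacent to $x_4$; only after further showing $D^2_2=\{w,w_1\}$, $D^3_3=\{z,z_1\}$ and $D^4_4=D^4_3=\emptyset$ does an unbalanced edge (indeed $x_1u$) appear. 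Nothing in your sketch shows the configuration is rigid enough for the two edges you name to detect the contradiction on their own. In Case 2 you propose an exhaustive enumeration using parity of $|V(\G)|$ and Proposition~\ref{p2}(i); in fact this case admits a short direct argument (the paper's): the third neighbour $z$ of $x_2$ must lie in $D^2_2$ and be at distance $2$ from $x_4$ (else $|W_{x_3,x_4}|\ge 6$), producing a common neighbour $w_1\in D^3_3$ of $z$ and $x_4$ distinct from the neighbour $w$ of $x_3$ in $D^3_3$, whence $\{x_2,x_3,x_4,z,w,w_1\}\subseteq W_{x_2,x_1}$ contradicts $\gamma=5$ — no parity and no enumeration needed. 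Also, a small inaccuracy: the common neighbour of $z$ and $x_4$ you describe could a priori lie in $D^4_3$ as well, not only in $D^3_3\cup D^4_4$; excluding that requires an argument. To repair the proposal you need to actually carry out the structural determination in Case 1 and either execute or replace the enumeration in Case 2.
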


\begin{proof}
	Towards a contradiction suppose that $d=4$, and so $\gamma=5$. Assume the notation from the first paragraph of this subsection, and note that Propositions \ref{ell4d4} and \ref{ell3d4} imply that $\ell=2$. By Lemma \ref{eq} we have $|D^2_1|=2$. Let $u_1, y_2$ denote the vertices of $D^2_1$. Note that $D^1_1$ is empty. We also observe that by Proposition~\ref{p1}(i) either $u\in \G(x_3)$, or $x_3$ has a neighbour in $D^3_3$. We consider these two cases separately. 
	
	{\sc Case 1:} $u$ and $x_3$ are adjacent. Then $\{x_0, x_1, x_2, x_3, u\} = W_{x_3,x_4}$, and so neither $x_2$ nor $u$ have neighbours in $D^2_1$. Since $\G$ is triangle-free, there exists $w \in \G(x_2)\cap D^2_2$, and $w$ has a neighbour in $D^2_1$ (by definition of the set $D^2_2$). We may assume without loss of generality that $w\in \G(y_2)$. Note that $d(w, x_3)=2$, and so $d(w, x_4)=2$ as well, as otherwise $x_3 x_4$ is not balanced. It follows that there exists a common neighbour $z$ of $w$ and $x_4$, and it is clear that $z \in D^3_3$.
	
	Similarly we find that $u$ has a neighbour $w_1 \in D^2_2$, and as $k=3$, we have that $w_1 \ne w$. Note that $\{x_2, x_1, x_0,w,y_2\} = W_{x_2, x_3}$, and so $d(x_3,u_1)=3$ (otherwise $u_1 \in W_{x_2, x_3}$, a contradiction). Note however that $d(x_3,u_1)=3$ is only possible if $w_1$ and $u_1$ are adjacent. A similar argument as above shows that $w_1$ and $x_4$ must have a common neighbour $z_1 \in D^3_3$. If $z_1=z$, then $\{z,w,w_1,y_2,u_1,x_0\} \subseteq W_{z,x_4}$, a contradiction. Therefore $z_1 \ne z$, and it is now clear that $D^2_2=\{w, w_1\}$, $D^3_3 = \{z,z_1\}$. If there exists $t \in D^4_4$, then $t$ is adjacent to either $z$ or $z_1$, but none of these two possible edges is balanced, and so $D^4_4 = \emptyset$. If $z$ ($z_1$, respectively) has a neighbour in $D^4_3$, then $x_2 x_1$ ($u x_1$, respectively) is not balanced, a contradiction. As $\G$ is triangle-free, $z$ and $z_1$ both have a neighbour in $D^3_2$. {Assume now for a moment that there exists a vertex $y_4 \in D^4_3$. In this case $\gamma=5$ forces that there is a unique vertex in $D^3_2$, which is therefore adjacent to both $z$ and $z_1$, to $y_4$ and to at least one of $y_2, u_1$, contradicting $k=3$. It follows that $D^4_3 = \emptyset$. Let us denote the neighbours of $z$ and $z_1$ in $D^3_2$ by $v$ and $v_1$ respectively. Note that as $z x_4$ and $z_1 x_4$ are balanced, we have that  $W_{z,x_4}=\{z,w,v,y_2,x_0\}$ and $W_{z_1,x_4}=\{z_1,w_1,v_1,u_2,x_0\}$. It follows that $v$ and $v_1$ must be adjacent to $y_2$ and $u_1$, respectively, and so $v \ne v_1$. As $k=3$, also $v$ and $v_1$ are adjacent. It is now easy to see that $\G$ is not NDB with $\gamma=5$ (for example, edge $x_1u$ is not balanced). This shows that $u$ and $x_3$ are not adjacent.}
	
	{\sc Case 2:}  $x_3$ has a neighbour $w$ in $D^3_3$. As $\G$ is triangle-free, $x_2$ has a neighbour $z$ in $D^2_1\cup D^2_2$, and $w \not \sim x_4$. If $z \in D^2_1$, then $\{x_0, x_1, x_2, x_3, z, w\} \subseteq W_{x_3, x_4}$, a contradiction. This yields that $z \in D^2_2$.  If $d(z,x_4) \ge 3$, then again $\{x_0, x_1, x_2, x_3, z, w\} \subseteq W_{x_3, x_4}$, a contradiction. Therefore, $z$ and $x_4$ have a common neighbour $w_1 \in D_3^3$, and $w_1 \ne w$ as $w \not \sim x_4$. But now $\{x_2, x_3, x_4, z, w,w_1\} \subseteq W_{x_2, x_1}$, a contradiction. This finishes the proof.
\end{proof}

\subsection{Case $d=3$}

In this subsection we consider the case $d=3$. We start with the following proposition. 

\begin{proposition}\label{ell2k3}
	Let $\G$ denote a regular NDB graph with valency $k=3$, diameter $d=3$ and $\gamma=4$. Then for every edge $x_0 x_1$ of $\G$ we have that $|D^1_2(x_1, x_0)|=|D^2_1(x_1, x_0)|=2$. 
\end{proposition}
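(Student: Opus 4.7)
The plan is to set $m := |D^1_2(x_1,x_0)|$; by Lemma \ref{eq}, $|D^2_1(x_1,x_0)| = m$ as well, so it suffices to prove $m = 2$. I would first extract two equations. Since $\G$ is $3$-regular and $\G(x_1) = \{x_0\} \cup D^1_1(x_1,x_0) \cup D^1_2(x_1,x_0)$, we get $|D^1_1(x_1,x_0)| = 2 - m$, so a priori $m \in \{0,1,2\}$. Applying \eqref{er} with $\gamma = 4$ together with Lemma \ref{eq} yields $|D^2_3(x_1,x_0)| = 4 - m$.

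The strategy is then to rule out $m \in \{0,1\}$ by showing that in each case $D^2_3(x_1,x_0)$ is forced to be too small. The key observation I would establish first is the following: any $z \in D^2_3(x_1,x_0)$ is at distance $2$ from $x_1$, hence has a neighbour in $\G(x_1)\setminus\{x_0\} = D^1_1(x_1,x_0) \cup D^1_2(x_1,x_0)$; but every vertex of $D^1_1(x_1,x_0)$ is adjacent to $x_0$, which would force $d(z,x_0) \le 2$, contradicting $z \in D^2_3(x_1,x_0)$. Hence every vertex of $D^2_3(x_1,x_0)$ has at least one neighbour in $D^1_2(x_1,x_0)$.

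With this in hand, if $m = 0$ then $D^1_2(x_1,x_0) = \emptyset$ and the observation immediately forces $D^2_3(x_1,x_0) = \emptyset$, contradicting Proposition \ref{nonempty}. If $m = 1$, letting $a$ denote the unique vertex of $D^1_2(x_1,x_0)$, the observation gives $D^2_3(x_1,x_0) \subseteq \G(a) \setminus \{x_1\}$, whence $|D^2_3(x_1,x_0)| \le k - 1 = 2$; but \eqref{er} forces $|D^2_3(x_1,x_0)| = 3$, a contradiction. The only remaining possibility is $m = 2$, as required.

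I do not anticipate a serious obstacle here: the proof is a short two-equation count combined with the elementary neighbourhood observation above. The only conceptually subtle point is noticing that a common neighbour of $x_0$ and $x_1$ cannot serve as a distance-two witness from $x_1$ for any vertex of $D^2_3(x_1,x_0)$, which is what channels all such witnesses through the (few) vertices of $D^1_2(x_1,x_0)$ and makes the size bound tight enough to exclude $m \in \{0,1\}$.
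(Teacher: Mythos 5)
Your reduction to $m\in\{0,1,2\}$ and your key observation (every vertex of $D^2_3(x_1,x_0)$ must have a neighbour in $D^1_2(x_1,x_0)$, since a neighbour in $D^1_1(x_1,x_0)\cup\{x_0\}$ would put it at distance at most $2$ from $x_0$) are both correct, and they do dispose of the case $m=0$. But there is an arithmetic error that invalidates the case $m=1$, which is where essentially all the work lies. Equation \eqref{er} reads $\sum_{i=1}^{3}|D^{i-1}_i(x_1,x_0)|=4$, and the $i=1$ term is $|D^0_1(x_1,x_0)|=|\{x_1\}|=1$ (the vertex $x_1$ itself lies in $W_{x_1,x_0}$). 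Hence $|D^2_3(x_1,x_0)|=3-m$, not $4-m$. For $m=1$ this gives $|D^2_3(x_1,x_0)|=2$, which is perfectly consistent with your upper bound $|D^2_3(x_1,x_0)|\le |\G(a)\setminus\{x_1\}|=2$: the unique vertex $a$ of $D^1_2(x_1,x_0)$ simply has both its remaining neighbours in $D^2_3(x_1,x_0)$. No contradiction arises, so $m=1$ is not excluded by counting alone.

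This is not a repairable slip within your framework: the configuration with $|D^1_2|=|D^2_1|=1$, $|D^1_1|=1$, $|D^2_3|=|D^3_2|=2$ satisfies every local count you invoke, and the paper spends the bulk of its proof eliminating exactly this case by a genuine structural analysis --- splitting on whether the two vertices of $D^2_3$ are adjacent, tracking the forced neighbour $v\in D^2_2$ of the unique vertex of $D^1_1$, checking balancedness of several auxiliary edges such as $x_2x_3$, $y_2x_0$ and $x_2u$, and using the fact that a cubic graph has even order to control $D^3_3$. You would need to supply an argument of this kind to close the $m=1$ case; the two-equation count cannot do it.
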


\begin{proof}
Pick an edge $x_0 x_1$ of $\G$ and let $D^i_j = D^i_j(x_1, x_0)$. Observe first that that $|D^1_2| \le 2$ as $k=3$. By Proposition~\ref{nonempty} we have that $D^2_3 \ne \emptyset$, and so pick $x_3 \in D^2_3$. Note that $x_1$ and $x_3$ have  a common neighbour $x_2 \in D^1_2$. Assume to the contrary that $|D^1_2|=1$, and so $|D^2_3|=2$, $|D^1_1|=1=|D^2_1|$. Let us denote the unique vertex of $D^2_1$ by $y_2$ (note that $y_2$ has two neighbors, say $y_3$ and $u_1$  in $D^3_2$), the unique vertex of $D^1_1$ by $w$, and the unique vertex of $D^2_3 \setminus \{x_3\}$ by $u$ (note that $\G(x_2)=\{x_1,x_3,u\}$).  Note that $w$ has a neighbour $v$ in $D^2_2$, and that $D^2_2=\{v\}$.
	
Assume first that $u$ and $x_3$ are not adjacent. Then $W_{x_2, x_3} = \{x_2,u,x_1,x_0\}$, and so $w$ is at distance $2$ from $x_3$ (otherwise $w \in W_{x_2, x_3}$). It follows that $x_3$ is adjacent with $v$. Similarly we show that $u$ is adjacent with $v$. As none of the neighbours of $v$ is contained in $D^3_3$, every vertex from $D^3_3$ must be adjacent to either $u$ or $x_3$, and so $D_3^3 \cup \{x_2, x_3, u\}\subseteq W_{x_2, x_1}$. It follows that $|D^3_3| \le 1$. As $\G$ is a cubic graph, it must have an even order, which gives us $D^3_3 = \emptyset$. This shows that both $u$ and $x_3$ have a neighbour in $D^3_2$. But now $\{y_2, y_3,u_1,x_3, u\} \cup D^3_2 \subseteq W_{y_2, x_0}$, a contradiction.
	
Therefore, $u$ and $x_3$ must be adjacent, and they have a common neighbour $x_2$. Let $z_1$ and $z_2$ denote the third neighbour of $u$ and $x_3$, respectively. If $z_1 = z_2$ then $u x_3$ is not balanced, and so we have that $z_1 \ne z_2$. Furthermore, as $\{x_2, x_3, u\} \subseteq W_{x_2, x_1}$, not both of $z_1, z_2$ are contained in $D^3_3 \cup D^3_2$. Therefore, either $z_1$ or $z_2$ is equal to $v$. Without loss of generality assume that $z_1=v$. But then $d=3$ forces $W_{x_2, u} = \{x_2, x_1, x_0\}$, a contradiction. This  shows that $|D^1_2|=2$, and by Lemma \ref{eq} also $|D^2_1|=2$.
\end{proof}

\begin{corollary}
	\label{cor:k=3-d=3}
	Let $\G$ denote a regular NDB graph with valency $k=3$, diameter $d=3$ and $\gamma=4$. Then $\G$ is triangle-free and $D^3_3(x,y)=\emptyset$ for every edge $xy$ of $\G$ 
\end{corollary}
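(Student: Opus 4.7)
The corollary has two parts: triangle-freeness, and $D^3_3(x,y)=\emptyset$ for every edge $xy$.

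For triangle-freeness, fix an edge $xy$. The neighborhood of $x$ decomposes as $\G(x)=\{y\}\cup D^1_1(x,y)\cup D^1_2(x,y)$, and has size $k=3$. Proposition~\ref{ell2k3} gives $|D^1_2(x,y)|=2$, so $|D^1_1(x,y)|=0$: $x$ and $y$ share no common neighbour, and therefore $\G$ is triangle-free.

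For the second assertion, fix an arbitrary edge $x_0x_1$ of $\G$ and set $D^i_j=D^i_j(x_1,x_0)$. From Proposition~\ref{ell2k3} and identity~\eqref{er} applied with $\gamma=d+1=4$, one obtains $|D^1_2|=|D^2_1|=2$ and $|D^2_3|=|D^3_2|=1$. Write $D^2_3=\{x_3\}$ and $D^3_2=\{y_3\}$, and pick a shortest path $x_0,x_1,x_2,x_3$. Suppose for contradiction that some $z\in D^3_3$ exists. By Lemma~\ref{pr}(ii), $\G(z)\subseteq D^2_2\cup\{x_3,y_3\}\cup D^3_3$.

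The decisive step is to rule out neighbours of $z$ in $D^2_2$. If $z'\in\G(z)\cap D^2_2$, then $d(z',x_0)=d(z',x_1)=2<3=d(z,x_0)=d(z,x_1)$, so $x_0,x_1\in W_{z',z}$; moreover neither $x_0$ nor $x_1$ is a neighbour of $z'$ (both are at distance $2$ from $z'$). By triangle-freeness the two neighbours of $z'$ other than $z$ lie at distance $\geq 2$ from $z$, and are therefore also in $W_{z',z}$. Together with $z'$ itself we find $|W_{z',z}|\geq 5$, contradicting $|W_{z',z}|=\gamma=4$. Hence $\G(z)\subseteq\{x_3,y_3\}\cup D^3_3$, and since $\deg(z)=3>2=|\{x_3,y_3\}|$, there must exist $z'\in\G(z)\cap D^3_3$.

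To finish, I apply Proposition~\ref{p2}(i) to the shortest path $x_0,x_1,x_2,x_3$: since the third neighbour $p$ of $x_2$ is forced (by triangle-freeness) into $D^2_1\cup D^2_2$, it already uses one unit of the budget $|A\cup B|\leq 2$, and this yields $|\G(x_3)\cap D^3_3|+\mathbf{1}[y_3\sim x_3]\leq 1$. Applying the same proposition to the variant shortest paths $x_0,x_1,u,x_3$ (when $u\sim x_3$) and to the reversed path $x_3,x_2,x_1,x_0$ produces analogous bounds on how many $D^3_3$-vertices can be attached to $x_3$ or $y_3$. Combined with triangle-freeness, these bounds force nearly every vertex of $D^3_3$ to have all three neighbours inside $D^3_3$, and a short case analysis on $|D^3_3|$ and on the external attachment pattern to $\{x_3,y_3\}$ eliminates each configuration by exhibiting either a triangle (contradicting Part~1) or an edge inside $D^3_3$ whose $W$-set exceeds $\gamma=4$.

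The main obstacle lies in this concluding case analysis, which is considerably more delicate than the $D^2_2$-elimination; the key technical leverage throughout is that triangle-freeness lets one place whole closed neighbourhoods of a vertex into a $W$-set, while Proposition~\ref{p2}(i) severely restricts the external connectivity of $D^3_3$ to $\{x_3,y_3\}$.
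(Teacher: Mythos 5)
Your first part (triangle-freeness via Proposition~\ref{ell2k3} forcing $|D^1_1|=0$) and your elimination of $D^2_2$-neighbours of a hypothetical $z\in D^3_3$ are both correct; the latter is a small variation on the paper's argument (you count the two remaining neighbours of $z'$ via triangle-freeness, the paper counts one neighbour in $D^1_2$ and one in $D^2_1$), and either works.

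The genuine gap is the finish. After concluding that $\G(z)\subseteq\{x_3,y_3\}\cup D^3_3$ and that $z$ has a neighbour $z'\in D^3_3$, you defer to a ``short case analysis'' built on Proposition~\ref{p2}(i) that you do not carry out and yourself describe as the main obstacle; as written, the proof is incomplete, and the proposed route through Proposition~\ref{p2}(i) is shaky besides (the sets $A$ and $B$ there are tied to a fixed geodesic of length $d$ and do not directly yield the bound $|\G(x_3)\cap D^3_3|+\mathbf{1}[y_3\sim x_3]\le 1$ you assert). The missing observation is much simpler: since $d(z,x_1)=3$, the vertex $z$ must have a neighbour at distance $2$ from $x_1$, and by Lemma~\ref{pr}(ii) such a neighbour lies in $D^2_2\cup D^2_3$; having already excluded $D^2_2$, you conclude $z\sim x_3$, and symmetrically (using $d(z,x_0)=3$) $z\sim y_3$. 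Thus \emph{every} vertex of $D^3_3$ is adjacent to both $x_3$ and $y_3$, its third neighbour lies in $D^3_3$, and the resulting edge $zz'$ inside $D^3_3$ together with the common neighbour $x_3$ produces a triangle, contradicting the first part. This is exactly how the paper closes the argument; no case analysis on $|D^3_3|$ is needed.
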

\begin{proof}
	Pick an arbitrary edge $xy$ of $\G$ and let $D^i_j=D^i_j(x,y)$. By Proposition~\ref{nonempty} we get that the sets  $D^1_2$, $D^2_1$, $D^2_3$ and $D^3_2$ are all nonempty. Furthermore,  by Proposition~\ref{ell2k3} and Lemma \ref{eq} we have that $|D^1_2|=|D^2_1|=2$ and $|D^3_2| = |D^2_3|=1$ (recall that $\gamma=4$). Since $k=3$, it follows  that $D^1_1 = \emptyset$. This shows that $\G$ is triangle-free. 
	
	We next assert the set $D^3_3$ is empty. Suppose to the contrary there exists $z\in D^3_3$ and let $w$ denote a neighbour of $z$. Assume first that $w \in D^2_2$. Since $D^1_1=\emptyset$, there exist vertices $u \in D^1_2$ and $v \in D^2_1$ which are neighbours of $w$. We thus have $\left\lbrace u,v,w,x,y \right\rbrace \subseteq W_{w,z}$, contradicting $\gamma=4$. This shows that $w\notin D^2_2$. Therefore $z$ is adjacent to both vertices which are in $D^3_2$ and $D^2_3$. As $z$ has three neighbours, none of which is in $D^2_2$, and as $|D^2_3|=|D^3_2|=1$, it follows that $z$ has a neighbour $w' \in D^3_3$. But by the same argument as above, $w'$ must be adjacent to both vertices in $D^3_2$ and $D^2_3$, contradicting the fact that $\G$ is triangle-free.
\end{proof}

\begin{theorem}
	\label{thm:k3d3}
  Let $\G$ denote a regular NDB graph with valency $k=3$, diameter $d \ge 3$ and $\gamma=d+1$. Then $\G$ is isomorphic to the $3$-dimensional hypercube $Q_3$. 
\end{theorem}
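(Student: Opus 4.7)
The plan is to first invoke Propositions~\ref{prop:k3dne5} and~\ref{ell2d4} to rule out $d=5$ and $d=4$, so that $d=3$ and $\gamma=4$, and then to extract a rigid local structure from the earlier results. Corollary~\ref{cor:k=3-d=3} states that $\G$ is triangle-free and that $D^3_3(u,v)=\emptyset$ for every edge $uv$; combining this with Proposition~\ref{ell2k3} and Lemma~\ref{eq} yields $|D^1_2(u,v)|=|D^2_1(u,v)|=2$ and, via $\gamma=4$, also $|D^2_3(u,v)|=|D^3_2(u,v)|=1$ for every edge. For an arbitrary vertex $x$ and neighbour $y$, $|D^3_1(x,y)|=0$ by the triangle inequality and $|D^3_3(x,y)|=0$, so $|\G_3(x)|=|D^3_2(x,y)|=1$; I write $\bar{x}$ for the unique vertex at distance $3$ from $x$. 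Thus $|V(\G)|=1+3+|\G_2(x)|+1=5+|\G_2(x)|$; triangle-freeness makes $\G_1(x)$ independent, so each of the three neighbours of $x$ contributes exactly two distinct neighbours to $\G_2(x)$, giving $|\G_2(x)|\le 6$. Since $\G$ is cubic, $|V(\G)|$ is even, and therefore $|V(\G)|\in\{8,10\}$.

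The main obstacle is to rule out $|V(\G)|=10$. In that case $|\G_2(x)|=5$, and the six $\G_1(x)$--$\G_2(x)$ edges show that the integers $c_2(v):=|\G(v)\cap\G_1(x)|$ (for $v\in\G_2(x)$) are all at least $1$ and sum to $6$, forcing the multiset $\{c_2(v):v\in\G_2(x)\}=\{1,1,1,1,2\}$. Since every $4$-cycle through $x$ has the form $x-y_i-v-y_j-x$ with $v\in\G_2(x)$, the number of $4$-cycles through $x$ equals $\sum_{v\in\G_2(x)}\binom{c_2(v)}{2}=\binom{2}{2}=1$, independent of $x$. Summing over all ten vertices and dividing by four (since each $4$-cycle contains four vertices) would give $10/4$ total $4$-cycles in $\G$, which is not an integer. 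This contradiction rules out $|V(\G)|=10$, so $|V(\G)|=8$.

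It remains to identify the graph. With $|\G_2(x)|=3$, the same counting forces $c_2(v)=2$ for each $v\in\G_2(x)$, so the bipartite graph between $\G_1(x)=\{y_1,y_2,y_3\}$ and $\G_2(x)$ is $2$-regular on $3+3$ vertices and therefore a single $6$-cycle; labelling by pairs, I may write $\G_2(x)=\{v_{12},v_{13},v_{23}\}$ where $v_{ij}\sim y_i$ and $v_{ij}\sim y_j$. Finally every neighbour $z$ of $\bar{x}$ satisfies $d(x,z)\ge d(x,\bar{x})-1=2$, $d(x,z)\le 3$, and $d(x,z)\ne 3$ (because $\G_3(x)=\{\bar{x}\}$ and $z\ne\bar{x}$), so $z\in\G_2(x)$; hence $\bar{x}$ is adjacent to all of $v_{12},v_{13},v_{23}$. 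The adjacency description obtained is exactly that of the $3$-dimensional hypercube $Q_3$ under the identification of $x$ with $(0,0,0)$, of $y_i$ with the $i$-th standard basis vector, of $v_{ij}$ with the sum of the $i$-th and $j$-th basis vectors, and of $\bar{x}$ with $(1,1,1)$. This completes the proof.
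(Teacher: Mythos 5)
Your proof is correct and, at the decisive step, genuinely different from the one in the paper. Both arguments begin the same way: reduce to $d=3$ via Propositions~\ref{prop:k3dne5} and~\ref{ell2d4}, then use Corollary~\ref{cor:k=3-d=3}, Proposition~\ref{ell2k3} and Lemma~\ref{eq} to get, for every edge $xy$, $|D^1_2|=|D^2_1|=2$, $|D^2_3|=|D^3_2|=1$ and $D^1_1=D^3_3=\emptyset$, whence $|V(\G)|=8+|D^2_2(x,y)|$ and, by parity, $|V(\G)|\in\{8,10\}$. The paper disposes of the case $|V(\G)|=10$ by invoking the census of cubic graphs on ten vertices from \cite{BCCS} and inspecting the four surviving candidates of Figure~\ref{G1}; you instead observe that every vertex would lie on exactly one $4$-cycle (the six edges from $\G_1(x)$ to $\G_2(x)$ distribute over the five vertices of $\G_2(x)$ as $\{2,1,1,1,1\}$, and triangle-freeness forces every $4$-cycle through $x$ to have its antipodal vertex in $\G_2(x)$), so that summing over the ten vertices gives $10=4N$ for the number $N$ of $4$-cycles, which is impossible. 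This self-contained double-counting argument removes the reliance on an external computer enumeration, which is a real gain; your explicit reconstruction of $Q_3$ in the order-$8$ case is likewise more detailed than the paper's appeal to a well-known fact.

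Two small repairs are needed. First, your parity step a priori also allows $|\G_2(x)|=1$, i.e.\ $|V(\G)|=6$; this is excluded either because each $y_i$ must have two distinct neighbours in $\G_2(x)$, or directly from $|V(\G)|=8+|D^2_2|\ge 8$. Second, in the final paragraph the counting $\sum_v c_2(v)=6$ over the three vertices of $\G_2(x)$ with $c_2(v)\ge 1$ also admits the distribution $\{1,2,3\}$, so "the same counting" does not by itself force $c_2(v)=2$; you should first prove, as you do a few lines later, that $\bar x$ is adjacent to all three vertices of $\G_2(x)$, which gives $c_2(v)\le 2$ and hence $c_2(v)=2$ for all $v$. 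With the order of these two observations swapped, the argument is complete.
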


\begin{proof}
	By Theorem~\ref{thm:diameter}(i), Proposition~\ref{prop:k3dne5} and Proposition~\ref{ell2d4} we have that $d =3$. Pick an edge $xy$ of $\G$ and let $D^i_j=D^i_j(x,y)$. Observe that $\G$ is triangle-free and $D^3_3=\emptyset$ by Corollary \ref{cor:k=3-d=3}. We first show that $D^2_2 = \emptyset$ as well. Observe that as $D^1_1=\emptyset$,  every vertex of $D^2_2$ must have a neighbour in both  $D^1_2$ and $D^2_1$. This shows $\left|D^2_2 \right| \in \left\lbrace 1, 2, 3 \right\rbrace $, and so $\left|V(\G) \right| \in \left\lbrace 9, 10, 11 \right\rbrace $. However, since $\G$ is regular with $k=3$, we have $\left|V(\G) \right|=10$ and $\left|D^2_2 \right| =2$.  In \cite{BCCS}, it is shown that the number of connected $3$-regular graphs with $10$ vertices is $19$, but only 5 of them have diameter $d=3$ and girth $g \ge 4$. Out of these five graphs, only four of them have all vertices with eccentricity 3, see Figure \ref{G1}. It is easy to see that none of these graphs is NDB with $\gamma=4$. This shows that $D^2_2=\emptyset$, and so $|V(\G)|=8$. But it is well-known (and also easy to see) that $Q_3$ is the only cubic triangle-free graph with eight vertices and diameter $d=3$.

\end{proof}

{\small\begin{figure}[t]{\rm
\begin{center}
\begin{tikzpicture}[scale=.45]
\draw [line width=1pt, draw=ForestGreen] (3.996232183331462,-1.3232284514950061)-- (4.003182934331805,1.266193600374427);
\draw [line width=1pt, draw=ForestGreen] (4.003182934331805,1.266193600374427)-- (2.486782115964983,3.3651655948764025);
\draw [line width=1pt, draw=ForestGreen] (2.486782115964983,3.3651655948764025)-- (0.02624330027896704,4.171951571545325);
\draw [line width=1pt, draw=ForestGreen] (0.02624330027896704,4.171951571545325)-- (-2.438591315772598,3.378386708940445);
\draw [line width=1pt, draw=ForestGreen] (-2.438591315772598,3.378386708940445)-- (-3.9662386855053127,1.2875858122991861);
\draw [line width=1pt, draw=ForestGreen] (-3.9662386855053127,1.2875858122991861)-- (-3.9731894365056553,-1.301836239570246);
\draw [line width=1pt, draw=ForestGreen] (-3.9731894365056553,-1.301836239570246)-- (-2.456788618138834,-3.4008082340722225);
\draw [line width=1pt, draw=ForestGreen] (-2.456788618138834,-3.4008082340722225)-- (0.003750197547181866,-4.207594210741146);
\draw [line width=1pt, draw=ForestGreen] (0.003750197547181866,-4.207594210741146)-- (2.4685848135987474,-3.4140293481362654);
\draw [line width=1pt, draw=ForestGreen] (2.4685848135987474,-3.4140293481362654)-- (3.996232183331462,-1.3232284514950061);
\draw [line width=1pt, draw=ForestGreen] (0.003750197547181866,-4.207594210741146)-- (2.4685848135987474,-3.4140293481362654);
\draw [line width=1pt, draw=ForestGreen] (2.4685848135987474,-3.4140293481362654)-- (3.996232183331462,-1.3232284514950061);
\draw [line width=1pt, draw=ForestGreen] (3.996232183331462,-1.3232284514950061)-- (4.003182934331805,1.266193600374427);
\draw [line width=1pt, draw=ForestGreen] (4.003182934331805,1.266193600374427)-- (2.486782115964983,3.3651655948764025);
\draw [line width=1pt, draw=ForestGreen] (2.486782115964983,3.3651655948764025)-- (0.02624330027896704,4.171951571545325);
\draw [line width=1pt, draw=ForestGreen] (0.02624330027896704,4.171951571545325)-- (-2.438591315772598,3.378386708940445);
\draw [line width=1pt, draw=ForestGreen] (-2.438591315772598,3.378386708940445)-- (-3.9662386855053127,1.2875858122991861);
\draw [line width=1pt, draw=ForestGreen] (-3.9662386855053127,1.2875858122991861)-- (-3.9731894365056553,-1.301836239570246);
\draw [line width=1pt, draw=ForestGreen] (-3.9731894365056553,-1.301836239570246)-- (-2.456788618138834,-3.4008082340722225);
\draw [line width=1pt, draw=ForestGreen] (0.02624330027896704,4.171951571545325)-- (0.003750197547181866,-4.207594210741146);
\draw [line width=1pt, draw=ForestGreen] (-2.438591315772598,3.378386708940445)-- (4.003182934331805,1.266193600374427);
\draw [line width=1pt, draw=ForestGreen] (2.486782115964983,3.3651655948764025)-- (-3.9662386855053127,1.2875858122991861);
\draw [line width=1pt, draw=ForestGreen] (-2.456788618138834,-3.4008082340722225)-- (3.996232183331462,-1.3232284514950061);
\draw [line width=1pt, draw=ForestGreen] (2.4685848135987474,-3.4140293481362654)-- (-3.9731894365056553,-1.301836239570246);
\fill[color=ForestGreenTwo] (3.996232183331462,-1.3232284514950061) circle [radius=0.23];
\fill[color=ForestGreenTwo] (4.003182934331805,1.266193600374427) circle [radius=0.23];
\fill[color=ForestGreenTwo] (2.486782115964983,3.3651655948764025) circle [radius=0.23];
\fill[color=ForestGreenTwo] (0.02624330027896704,4.171951571545325) circle [radius=0.23];
\fill[color=ForestGreenTwo] (-2.438591315772598,3.378386708940445) circle [radius=0.23];
\fill[color=ForestGreenTwo] (-3.9662386855053127,1.2875858122991861) circle [radius=0.23];
\fill[color=ForestGreenTwo] (-3.9731894365056553,-1.301836239570246) circle [radius=0.23];
\fill[color=ForestGreenTwo] (-2.456788618138834,-3.4008082340722225) circle [radius=0.23];
\fill[color=ForestGreenTwo] (0.003750197547181866,-4.207594210741146) circle [radius=0.23];
\fill[color=ForestGreenTwo] (2.4685848135987474,-3.4140293481362654) circle [radius=0.23];
\end{tikzpicture}
\begin{tikzpicture}[scale=.45]
\draw [line width=1pt, draw=ForestGreen] (3.996232183331462,-1.3232284514950061)-- (4.003182934331805,1.266193600374427);
\draw [line width=1pt, draw=ForestGreen] (4.003182934331805,1.266193600374427)-- (2.486782115964983,3.3651655948764025);
\draw [line width=1pt, draw=ForestGreen] (2.486782115964983,3.3651655948764025)-- (0.02624330027896704,4.171951571545325);
\draw [line width=1pt, draw=ForestGreen] (0.02624330027896704,4.171951571545325)-- (-2.438591315772598,3.378386708940445);
\draw [line width=1pt, draw=ForestGreen] (-2.438591315772598,3.378386708940445)-- (-3.9662386855053127,1.2875858122991861);
\draw [line width=1pt, draw=ForestGreen] (-3.9662386855053127,1.2875858122991861)-- (-3.9731894365056553,-1.301836239570246);
\draw [line width=1pt, draw=ForestGreen] (-3.9731894365056553,-1.301836239570246)-- (-2.456788618138834,-3.4008082340722225);
\draw [line width=1pt, draw=ForestGreen] (-2.456788618138834,-3.4008082340722225)-- (0.003750197547181866,-4.207594210741146);
\draw [line width=1pt, draw=ForestGreen] (0.003750197547181866,-4.207594210741146)-- (2.4685848135987474,-3.4140293481362654);
\draw [line width=1pt, draw=ForestGreen] (2.4685848135987474,-3.4140293481362654)-- (3.996232183331462,-1.3232284514950061);
\draw [line width=1pt, draw=ForestGreen] (0.003750197547181866,-4.207594210741146)-- (2.4685848135987474,-3.4140293481362654);
\draw [line width=1pt, draw=ForestGreen] (2.4685848135987474,-3.4140293481362654)-- (3.996232183331462,-1.3232284514950061);
\draw [line width=1pt, draw=ForestGreen] (3.996232183331462,-1.3232284514950061)-- (4.003182934331805,1.266193600374427);
\draw [line width=1pt, draw=ForestGreen] (4.003182934331805,1.266193600374427)-- (2.486782115964983,3.3651655948764025);
\draw [line width=1pt, draw=ForestGreen] (2.486782115964983,3.3651655948764025)-- (0.02624330027896704,4.171951571545325);
\draw [line width=1pt, draw=ForestGreen] (0.02624330027896704,4.171951571545325)-- (-2.438591315772598,3.378386708940445);
\draw [line width=1pt, draw=ForestGreen] (-2.438591315772598,3.378386708940445)-- (-3.9662386855053127,1.2875858122991861);
\draw [line width=1pt, draw=ForestGreen] (-3.9662386855053127,1.2875858122991861)-- (-3.9731894365056553,-1.301836239570246);
\draw [line width=1pt, draw=ForestGreen] (-3.9731894365056553,-1.301836239570246)-- (-2.456788618138834,-3.4008082340722225);
\draw [line width=1pt, draw=ForestGreen] (-3.9731894365056553,-1.301836239570246)-- (3.996232183331462,-1.3232284514950061);
\draw [line width=1pt, draw=ForestGreen] (-3.9662386855053127,1.2875858122991861)-- (4.003182934331805,1.266193600374427);
\draw [line width=1pt, draw=ForestGreen] (-2.456788618138834,-3.4008082340722225)-- (-2.438591315772598,3.378386708940445);
\draw [line width=1pt, draw=ForestGreen] (0.003750197547181866,-4.207594210741146)-- (0.02624330027896704,4.171951571545325);
\draw [line width=1pt, draw=ForestGreen] (2.4685848135987474,-3.4140293481362654)-- (2.486782115964983,3.3651655948764025);
\fill[color=ForestGreenTwo] (-3.9731894365056553,-1.301836239570246) circle [radius=0.23];
\fill[color=ForestGreenTwo] (-2.456788618138834,-3.4008082340722225) circle [radius=0.23];
\fill[color=ForestGreenTwo] (0.003750197547181866,-4.207594210741146) circle [radius=0.23];
\fill[color=ForestGreenTwo] (2.4685848135987474,-3.4140293481362654) circle [radius=0.23];
\fill[color=ForestGreenTwo] (3.996232183331462,-1.3232284514950061) circle [radius=0.23];
\fill[color=ForestGreenTwo] (4.003182934331805,1.266193600374427) circle [radius=0.23];
vrh 7
\fill[color=ForestGreenTwo] (2.486782115964983,3.3651655948764025) circle [radius=0.23];
\fill[color=ForestGreenTwo] (0.02624330027896704,4.171951571545325) circle [radius=0.23];
\fill[color=ForestGreenTwo] (-2.438591315772598,3.378386708940445) circle [radius=0.23];
\fill[color=ForestGreenTwo] (-3.9662386855053127,1.2875858122991861) circle [radius=0.23];
\end{tikzpicture}
\begin{tikzpicture}[scale=.45]
\draw [line width=1pt, draw=ForestGreen] (3.996232183331462,-1.3232284514950061)-- (4.003182934331805,1.266193600374427);
\draw [line width=1pt, draw=ForestGreen] (4.003182934331805,1.266193600374427)-- (2.486782115964983,3.3651655948764025);
\draw [line width=1pt, draw=ForestGreen] (2.486782115964983,3.3651655948764025)-- (0.02624330027896704,4.171951571545325);
\draw [line width=1pt, draw=ForestGreen] (0.02624330027896704,4.171951571545325)-- (-2.438591315772598,3.378386708940445);
\draw [line width=1pt, draw=ForestGreen] (-2.438591315772598,3.378386708940445)-- (-3.9662386855053127,1.2875858122991861);
\draw [line width=1pt, draw=ForestGreen] (-3.9662386855053127,1.2875858122991861)-- (-3.9731894365056553,-1.301836239570246);
\draw [line width=1pt, draw=ForestGreen] (-3.9731894365056553,-1.301836239570246)-- (-2.456788618138834,-3.4008082340722225);
\draw [line width=1pt, draw=ForestGreen] (-2.456788618138834,-3.4008082340722225)-- (0.003750197547181866,-4.207594210741146);
\draw [line width=1pt, draw=ForestGreen] (0.003750197547181866,-4.207594210741146)-- (2.4685848135987474,-3.4140293481362654);
\draw [line width=1pt, draw=ForestGreen] (2.4685848135987474,-3.4140293481362654)-- (3.996232183331462,-1.3232284514950061);
\draw [line width=1pt, draw=ForestGreen] (0.003750197547181866,-4.207594210741146)-- (2.4685848135987474,-3.4140293481362654);
\draw [line width=1pt, draw=ForestGreen] (2.4685848135987474,-3.4140293481362654)-- (3.996232183331462,-1.3232284514950061);
\draw [line width=1pt, draw=ForestGreen] (3.996232183331462,-1.3232284514950061)-- (4.003182934331805,1.266193600374427);
\draw [line width=1pt, draw=ForestGreen] (4.003182934331805,1.266193600374427)-- (2.486782115964983,3.3651655948764025);
\draw [line width=1pt, draw=ForestGreen] (2.486782115964983,3.3651655948764025)-- (0.02624330027896704,4.171951571545325);
\draw [line width=1pt, draw=ForestGreen] (0.02624330027896704,4.171951571545325)-- (-2.438591315772598,3.378386708940445);
\draw [line width=1pt, draw=ForestGreen] (-2.438591315772598,3.378386708940445)-- (-3.9662386855053127,1.2875858122991861);
\draw [line width=1pt, draw=ForestGreen] (-3.9662386855053127,1.2875858122991861)-- (-3.9731894365056553,-1.301836239570246);
\draw [line width=1pt, draw=ForestGreen] (-3.9731894365056553,-1.301836239570246)-- (-2.456788618138834,-3.4008082340722225);
\draw [line width=1pt, draw=ForestGreen] (0.02624330027896704,4.171951571545325)-- (0.003750197547181866,-4.207594210741146);
\draw [line width=1pt, draw=ForestGreen] (2.486782115964983,3.3651655948764025)-- (-2.456788618138834,-3.4008082340722225);
\draw [line width=1pt, draw=ForestGreen] (-2.438591315772598,3.378386708940445)-- (2.4685848135987474,-3.4140293481362654);
\draw [line width=1pt, draw=ForestGreen] (-3.9662386855053127,1.2875858122991861)-- (4.003182934331805,1.266193600374427);
\draw [line width=1pt, draw=ForestGreen] (-3.9731894365056553,-1.301836239570246)-- (3.996232183331462,-1.3232284514950061);
\fill[color=ForestGreenTwo] (3.996232183331462,-1.3232284514950061) circle [radius=0.23];
\fill[color=ForestGreenTwo] (4.003182934331805,1.266193600374427) circle [radius=0.23];
\fill[color=ForestGreenTwo] (2.486782115964983,3.3651655948764025) circle [radius=0.23];
\fill[color=ForestGreenTwo] (0.02624330027896704,4.171951571545325) circle [radius=0.23];
\fill[color=ForestGreenTwo] (-2.438591315772598,3.378386708940445) circle [radius=0.23];
\fill[color=ForestGreenTwo] (-3.9662386855053127,1.2875858122991861) circle [radius=0.23];
\fill[color=ForestGreenTwo] (-3.9731894365056553,-1.301836239570246) circle [radius=0.23];
\fill[color=ForestGreenTwo] (-2.456788618138834,-3.4008082340722225) circle [radius=0.23];
\fill[color=ForestGreenTwo] (0.003750197547181866,-4.207594210741146) circle [radius=0.23];
\fill[color=ForestGreenTwo] (2.4685848135987474,-3.4140293481362654) circle [radius=0.23];
\end{tikzpicture}
\begin{tikzpicture}[scale=.45]
\draw [line width=1pt, draw=ForestGreen] (3.996232183331462,-1.3232284514950061)-- (4.003182934331805,1.266193600374427);
\draw [line width=1pt, draw=ForestGreen] (4.003182934331805,1.266193600374427)-- (2.486782115964983,3.3651655948764025);
\draw [line width=1pt, draw=ForestGreen] (2.486782115964983,3.3651655948764025)-- (0.02624330027896704,4.171951571545325);
\draw [line width=1pt, draw=ForestGreen] (0.02624330027896704,4.171951571545325)-- (-2.438591315772598,3.378386708940445);
\draw [line width=1pt, draw=ForestGreen] (-2.438591315772598,3.378386708940445)-- (-3.9662386855053127,1.2875858122991861);
\draw [line width=1pt, draw=ForestGreen] (-3.9662386855053127,1.2875858122991861)-- (-3.9731894365056553,-1.301836239570246);
\draw [line width=1pt, draw=ForestGreen] (-3.9731894365056553,-1.301836239570246)-- (-2.456788618138834,-3.4008082340722225);
\draw [line width=1pt, draw=ForestGreen] (-2.456788618138834,-3.4008082340722225)-- (0.003750197547181866,-4.207594210741146);
\draw [line width=1pt, draw=ForestGreen] (0.003750197547181866,-4.207594210741146)-- (2.4685848135987474,-3.4140293481362654);
\draw [line width=1pt, draw=ForestGreen] (2.4685848135987474,-3.4140293481362654)-- (3.996232183331462,-1.3232284514950061);
\draw [line width=1pt, draw=ForestGreen] (0.003750197547181866,-4.207594210741146)-- (2.4685848135987474,-3.4140293481362654);
\draw [line width=1pt, draw=ForestGreen] (2.4685848135987474,-3.4140293481362654)-- (3.996232183331462,-1.3232284514950061);
\draw [line width=1pt, draw=ForestGreen] (3.996232183331462,-1.3232284514950061)-- (4.003182934331805,1.266193600374427);
\draw [line width=1pt, draw=ForestGreen] (4.003182934331805,1.266193600374427)-- (2.486782115964983,3.3651655948764025);
\draw [line width=1pt, draw=ForestGreen] (2.486782115964983,3.3651655948764025)-- (0.02624330027896704,4.171951571545325);
\draw [line width=1pt, draw=ForestGreen] (0.02624330027896704,4.171951571545325)-- (-2.438591315772598,3.378386708940445);
\draw [line width=1pt, draw=ForestGreen] (-2.438591315772598,3.378386708940445)-- (-3.9662386855053127,1.2875858122991861);
\draw [line width=1pt, draw=ForestGreen] (-3.9662386855053127,1.2875858122991861)-- (-3.9731894365056553,-1.301836239570246);
\draw [line width=1pt, draw=ForestGreen] (-3.9731894365056553,-1.301836239570246)-- (-2.456788618138834,-3.4008082340722225);
\draw [line width=1pt, draw=ForestGreen] (0.02624330027896704,4.171951571545325)-- (0.003750197547181866,-4.207594210741146);
\draw [line width=1pt, draw=ForestGreen] (-2.438591315772598,3.378386708940445)-- (2.4685848135987474,-3.4140293481362654);
\draw [line width=1pt, draw=ForestGreen] (-2.456788618138834,-3.4008082340722225)-- (2.486782115964983,3.3651655948764025);
\draw [line width=1pt, draw=ForestGreen] (-3.9662386855053127,1.2875858122991861)-- (3.996232183331462,-1.3232284514950061);
\draw [line width=1pt, draw=ForestGreen] (4.003182934331805,1.266193600374427)-- (-3.9731894365056553,-1.301836239570246);
\fill[color=ForestGreenTwo] (3.996232183331462,-1.3232284514950061) circle [radius=0.23];
\fill[color=ForestGreenTwo] (4.003182934331805,1.266193600374427) circle [radius=0.23];
\fill[color=ForestGreenTwo] (2.486782115964983,3.3651655948764025) circle [radius=0.23];
\fill[color=ForestGreenTwo] (0.02624330027896704,4.171951571545325) circle [radius=0.23];
\fill[color=ForestGreenTwo] (-2.438591315772598,3.378386708940445) circle [radius=0.23];
\fill[color=ForestGreenTwo] (-3.9662386855053127,1.2875858122991861) circle [radius=0.23];
\fill[color=ForestGreenTwo] (-3.9731894365056553,-1.301836239570246) circle [radius=0.23];
\fill[color=ForestGreenTwo] (-2.456788618138834,-3.4008082340722225) circle [radius=0.23];
\fill[color=ForestGreenTwo] (0.003750197547181866,-4.207594210741146) circle [radius=0.23];
\fill[color=ForestGreenTwo] (2.4685848135987474,-3.4140293481362654) circle [radius=0.23];
\end{tikzpicture}
\caption{\rm 
Connected $3$-regular graphs of order $10$ with diameter $d=3$, girth $g\geq 4$ and with all vertices with eccentricity $3$.}
\label{G1}
\end{center}
}\end{figure}
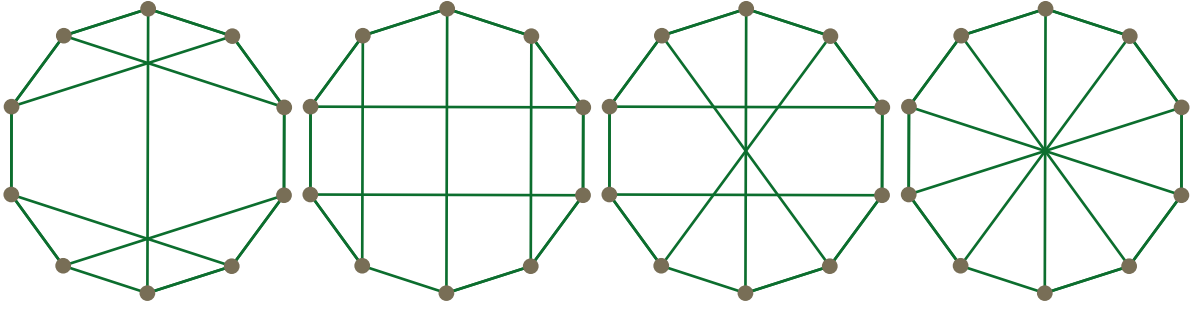}


 \section{Case $k=4$}
 \label{sec:k=4}
 
 Let $\G$ denote a regular NDB graph with valency $k=4$, diameter $d \ge 3$ and $\gamma=\gamma(\G)=d+1$. Recall that by Theorem~\ref{thm:diameter}(ii) we have $d \in \{3,4\}$. In this section we first show that case $d=4$ is not possible, and then classify regular  NDB graphs with $k=4$ and $d=3$. We start with the following lemma.

\begin{lemma}
	\label{lem:k=4d=4_ell_ge_3}
	Let $\G$ denote a regular NDB graph with valency $k=4$, diameter $d = 4$ and $\gamma=\gamma(\G)=d+1$.  Pick vertices $x_0, x_4$ of $\G$ such that $d(x_0,x_4)=4$, and pick a shortest path $x_0, x_1, x_2, x_3, x_4$ between $x_0$ and $x_4$. Let $\ell=\ell(x_0,x_1)$, $D^i_j=D^i_j(x_1, x_0)$ and $D^{\ell-1}_{\ell}=\{x_{\ell},u\}$. Then $\ell=2$. Moreover, $u \sim x_2$ and $u \sim x_3$. 
\end{lemma}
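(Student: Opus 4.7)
I would verify the lemma by exhausting $\ell\in\{2,3,4\}$ through Propositions~\ref{p1} and~\ref{p2}, Lemma~\ref{eq}, and the balance identity~\eqref{er}. Throughout, write $k=4$, $\gamma=5$, and let $w\in D^1_1$.

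For $\ell=4$: Lemma~\ref{eq} gives $|D^1_2|=|D^2_1|=1$ and hence $|D^1_1|=2$; Lemma~\ref{pr}(iii) with Proposition~\ref{p1}(i) confines the two extra neighbours of $x_3$ to $D^3_3\cup\{u\}$; Proposition~\ref{p2}(ii) with $\ell=d=4$ then forces $|A\cup B\cup(\G(u)\cap(D^4_4\cup D^4_3))|=1$, so $x_3$ has exactly one neighbour $z\in D^3_3$ together with $u$. The three extra neighbours of $x_4$ are squeezed into $D^3_3\cup\{u\}$, after which balancing one of the edges $x_3 x_4$ or $u x_3$ produces a contradiction. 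The case $\ell=3$, where $u\in D^2_3$, $|D^2_1|=1$, $|D^1_1|=2$, is handled analogously: Propositions~\ref{p1}(i) and~\ref{p2}(ii) force $x_3\sim u$, after which the remaining adjacencies of $u$ and $x_3$ are so constrained that some edge of the shortest path must be unbalanced.

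For $\ell=2$: Lemma~\ref{eq} yields $|D^2_1|=2$ and hence $D^1_1=\{w\}$. Assume first $u\not\sim x_2$; by Lemma~\ref{pr}(iii) the two extra neighbours of $x_2$ then lie in $\{w\}\cup D^2_1\cup D^2_2$, with Proposition~\ref{p1}(ii) placing at least one in $D^2_2$. A subcase analysis on this distribution, combined with Proposition~\ref{p2}(i) and the restrictions on $\G(x_3)$ from Proposition~\ref{p1}(i), forces either $\G(x_3)\subseteq\{x_2,u,x_4\}$ (contradicting $k=4$) or an imbalance on the edge $x_3 x_4$. Hence $u\sim x_2$. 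Suppose further $u\not\sim x_3$; then $d(u,x_3)=2$ and Lemma~\ref{pr}(iii) with Proposition~\ref{p1}(i) forces both extra neighbours of $x_3$ into $D^3_3$, so $|A|\ge 2$ saturates Proposition~\ref{p2}(i). This in turn forces $x_2\sim w$; a short distance computation then places both $w$ and $u$ in $W_{x_3,x_4}$ alongside $\{x_0,x_1,x_2,x_3\}$, yielding $|W_{x_3,x_4}|\ge 6$ and the desired contradiction.

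The principal obstacle is the $\ell=3$ case, where the larger valency $k=4$ (compared with the cubic setting of Propositions~\ref{ell4d4} and~\ref{ell3d4}) admits several configurations of $u$'s neighbours among $D^2_3, D^2_2, D^3_2, D^3_3, D^3_4, D^4_3, D^4_4$; each must be shown to violate either Proposition~\ref{p2}(ii) or the balance of an edge on the shortest path. Once this case is dispatched, the $\ell=4$ ruling is a parallel but shorter argument, and the adjacency statements in the $\ell=2$ case reduce to clean counting exercises.
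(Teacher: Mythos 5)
Your overall plan is the same as the paper's---case analysis on $\ell\in\{2,3,4\}$ driven by Propositions~\ref{p1} and~\ref{p2}, Lemma~\ref{eq} and edge balance---and your treatment of $\ell=2$, where the actual content of the lemma lives, is essentially sound: you first force $u\sim x_2$ and then $u\sim x_3$, whereas the paper derives $u\sim x_3$ first (by showing $x_3$ cannot have two neighbours in $D^3_3$) and then gets $u\sim x_2$ from the saturation of $W_{x_3,x_4}$ and the balance of $x_2x_1$ and $ux_1$. Either order works, and your ``short distance computation'' placing $u$ in $W_{x_3,x_4}$ does close (a common neighbour of $u$ and $x_4$ would have to lie in $D^2_3=\{x_3\}$, which is excluded).

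The genuine gap is that the exclusion of $\ell=3$ and $\ell=4$ is asserted rather than proved. For $\ell=3$ you say the case ``is handled analogously'' and that each configuration of $\G(u)$ ``must be shown to violate'' Proposition~\ref{p2}(ii) or the balance of a shortest-path edge---but you never show it, and you yourself flag this case as the principal obstacle. Two concrete symptoms: (a) you list $D^4_3$ and $D^4_4$ among the possible locations of neighbours of $u\in D^2_3$, which Lemma~\ref{pr}(iii) already forbids, so the configuration space you would have to exhaust was not actually pinned down; and (b) the decisive contradiction in this case is \emph{not} an unbalanced edge of the shortest path: after Propositions~\ref{p1}(i) and~\ref{p2}(ii) force $\G(x_3)=\{x_2,x_4,u,z\}$ with $z\in D^3_3$, the saturation of $W_{x_3,x_4}$ and $W_{u,x_4}$ pins down $\G(u)=\{x_2,x_3,z,x_4\}$, so $x_3$ and $u$ have identical neighbourhoods apart from each other and $W_{x_3,u}=\{x_3\}$; the failing edge is $x_3u$. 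For $\ell=4$ your setup is correct up to $\G(x_3)=\{x_2,x_4,u,z\}$, but the finish via ``the three extra neighbours of $x_4$'' and balancing $x_3x_4$ or $ux_3$ is not shown to produce a contradiction (e.g.\ $|W_{u,x_3}|\ge 2$ and $|W_{x_3,u}|\ge 4$ give nothing immediate); the clean route is through $x_2$: once $W_{x_2,x_1}=\{x_2,x_3,x_4,u,z\}$ is full, $x_2$ has no neighbours in $D^2_1\cup D^2_2$, hence two neighbours in $D^1_1$, contradicting Proposition~\ref{p1}(ii). As written, the proposal is a programme for the $\ell\ge 3$ cases, not a proof of them.
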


\begin{proof}
Assume first that $\ell=4$. By Proposition~\ref{p1}(i), vertex $x_3$ has a neighbour $z$ in $D_3^3$. Now $W_{x_2, x_1} = \{x_2, x_3, x_4,u,z\}$, and so $x_2$ has no neighbours in $D^2_2\cup D^2_1$. Consequently, $x_2$ has two neighbours in $D^1_1$, contradicting Proposition~\ref{p1}(ii).

Assume now that $\ell=3$. By Proposition~\ref{p1}(i) $x_3$ does not have neighbours in $D^3_2\cup D^2_2$, and so by Proposition~\ref{p2}(ii) we get that $x_3$ and $u$ are adjacent, and that $x_3$ has a neighbour $z$ in $D^3_3$. By Proposition~\ref{p2}(ii) vertex $x_2$ has no neighbours in $D^2_2 \cup D^2_1$, and so $x_2$ has a neighbour $w$ in $D^1_1$. Now $\{x_3,x_2,x_1,x_0,w\} \subseteq W_{x_3, x_4}$, implying that $x_4$ is adjacent to both $u$ and $z$. Similarly,  $\{u,x_2,x_1,x_0,w\} \subseteq W_{u, x_4}$, and so $u$ has no neighbours in $D^2_2 \cup D^3_2$. It follows that $u$ has a neighbour in $D^3_3$, and  by Proposition~\ref{p2}(ii), this neighbour is $z$. But now the edge $x_3u$ is not balanced, a contradiction.

This shows that $\ell=2$. By Proposition~\ref{p1}(i), vertex $x_3$ has either one or two neighbours in $D^3_3$. If $x_3$ has two neighbours in $D^3_3$, then by Proposition~\ref{p2}(i) vertex $x_2$ has no neighbours in $D^2_2\cup D^2_1$. Therefore, $x_2$ is adjacent to the unique vertex $w \in D^1_1$, and is also adjacent to $u$. But now we have that $\{x_3,x_2,x_1,x_0,u,w\} \subseteq W_{x_3, x_4}$, a contradiction. 

Therefore, $x_3$ has exactly one neighbour in $D^3_3$. As by Proposition~\ref{p1}(i) vertex $x_3$ has no neighbours in $D^2_2 \cup D^3_2$, we have that $x_3 \sim u$. Consequently $\{x_3,x_2,x_1,x_0,u\} \subseteq W_{x_3, x_4}$, and so $x_2$ and $u$ have no neighbours in $D^1_1 \cup D^2_1$.  
Since $k=4$ and since edges $x_2x_1$ and $u x_1$ are balanced, it follows  both of  $x_2$ and $u$ have exactly one neighbour in $D^2_2$, and that $x_2 \sim u$.
\end{proof}

 \begin{proposition}
	\label{prop:k=4d=4}
	Let $\G$ denote a regular NDB graph with valency $k=4$, diameter $d \ge 3$ and $\gamma=\gamma(\G)=d+1$.  Then $d \ne 4$. 
\end{proposition}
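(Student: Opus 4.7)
The plan is to leverage the very restrictive local structure forced by Lemma~\ref{lem:k=4d=4_ell_ge_3}. That lemma fixes $\ell=2$ and gives $u\sim x_2$, $u\sim x_3$; combined with $k=4$ it completely determines $\Gamma(x_1)=\{x_0,x_2,u,w\}$, $\Gamma(x_2)=\{x_1,x_3,u,v_2\}$, $\Gamma(u)=\{x_1,x_2,x_3,v_1\}$, and $\Gamma(x_3)=\{x_2,u,x_4,z\}$, where $w$ is the unique vertex of $D^1_1$, $v_2$ (resp.\ $v_1$) is the unique neighbor of $x_2$ (resp.\ $u$) in $D^2_2$, and $z$ is the unique neighbor of $x_3$ in $D^3_3$. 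By Lemma~\ref{eq}, $|D^2_1|=2$; denote its two vertices $y,y'$, so $\Gamma(x_0)=\{x_1,w,y,y'\}$.

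First I would apply Lemma~\ref{lem:k=4d=4_ell_ge_3} to the reversed shortest path $x_4,x_3,x_2,x_1,x_0$. Since $d(u,x_4)=2$ and $W_{x_3,x_4}=\{x_3,x_2,x_1,x_0,u\}$, the vertex $u$ is the unique ``extra'' vertex in the reversed setup and the reversed value of $\ell$ is again $2$. The lemma then yields the symmetric conclusions: $z$ is the unique vertex of $D^1_1(x_3,x_4)$ (so $z\sim x_4$), and $v_1,v_2\in D^2_2(x_3,x_4)$ are the unique neighbors there of $u$ and $x_2$ respectively. Together with two further vertices $y^*,y^{**}\in D^2_1(x_3,x_4)$, we obtain $\Gamma(x_4)=\{x_3,z,y^*,y^{**}\}$.

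I would then split into two cases. If $v_1=v_2=:v$, then $\Gamma(x_2)\setminus\{u\}=\Gamma(u)\setminus\{x_2\}=\{x_1,x_3,v\}$, so every vertex $t\notin\{x_2,u\}$ is adjacent to $x_2$ if and only if it is adjacent to $u$. Hence the transposition swapping $x_2$ and $u$ and fixing all other vertices is a graph automorphism, which implies $d(t,x_2)=d(t,u)$ for every $t\notin\{x_2,u\}$. This forces $W_{x_2,u}=\{x_2\}$, contradicting $|W_{x_2,u}|=\gamma=5$.

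If $v_1\ne v_2$, I would analyze the edge $v_1u$. Since $v_1\not\sim x_1,x_3$ (both at distance $2$ from $v_1$) and $v_1\not\sim x_2$ (else $v_1=v_2$ by uniqueness of $v_2$ as the neighbor of $x_2$ in $D^2_2$), we have $D^1_1(v_1,u)=\emptyset$ and $D^2_1(v_1,u)=\{x_1,x_2,x_3\}$; Lemma~\ref{eq} then forces $|D^1_2(v_1,u)|=3$, so all three of $v_1$'s other neighbors lie at distance $2$ from $u$. Combined with $v_1\in D^2_2(x_1,x_0)\cap D^2_2(x_3,x_4)$, these three neighbors must include at least one vertex from each of the small sets $\{w,y,y'\}$ and $\{z,y^*,y^{**}\}$. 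A symmetric analysis applies to $v_2$. I would finish by combining the balanced conditions at the edges $v_1u$, $v_2x_2$, $wx_1$ and $zx_3$ --- each of which forces $|W|=5$ and is therefore extremely restrictive --- to rule out all remaining configurations. The main obstacle is this last sub-case analysis; each individual step is a routine application of the balanced condition, but carefully eliminating every possible placement of the extra neighbors of $v_1$, $v_2$, $w$ and $z$ will occupy the bulk of the argument.
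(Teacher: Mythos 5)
Your setup is correct as far as it goes: the neighbourhoods of $x_1,x_2,u,x_3$ are indeed pinned down by Lemma~\ref{lem:k=4d=4_ell_ge_3} together with $k=4$ and Lemma~\ref{eq}, the case $v_1=v_2$ is correctly killed (the paper dismisses it in one line by noting that the edge $ux_2$ would not be balanced, which is exactly the content of your automorphism observation), and the computation $|D^1_2(v_1,u)|=3$ for the edge $v_1u$ is valid. The problem is that the case $v_1\ne v_2$ --- which is the only case where any real work is needed --- is never actually carried out. You collect constraints on where the remaining neighbours of $v_1$, $v_2$, $w$ and $z$ can sit and then announce that you would ``rule out all remaining configurations'' by combining balancedness at four edges. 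That is a plan, not a proof: the surviving configuration is not visibly contradictory from what you have written, and eliminating it is precisely the substance of the proposition. As it stands there is a genuine gap at the main step.

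For comparison, the paper closes this case with three targeted moves rather than an open-ended placement analysis. First, since $d(v_1,x_4)=d(v_2,x_4)=2$, each $v_i$ has a common neighbour $z_i\in D^3_3$ with $x_4$; balancedness of the edges $x_2x_1$ and $ux_1$ (each of whose $W$-sets already contains $x_2$ or $u$, $x_3$, $x_4$, $v_i$ and $z$) forces $z_1=z_2=z$, the neighbour of $x_3$ and $x_4$ in $D^3_3$. Second, letting $w_1,w_2$ be neighbours of $v_1,v_2$ in $D^1_1\cup D^2_1$, the inclusion $\{z,v_1,v_2,w_1,w_2,x_0\}\subseteq W_{z,x_4}$ forces $w_1=w_2$, and re-applying Lemma~\ref{lem:k=4d=4_ell_ge_3} to the \emph{second} geodesic $x_0,w_1,v_1,z,x_4$ (whose two vertices of $D^1_2(w_1,x_0)$ are $v_1$ and $v_2$) yields $v_1\sim v_2$. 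Third, one checks $W_{u,x_2}=\{u,v_1\}$, contradicting $\gamma=5$. If you want to complete your argument you need steps of this kind --- in particular the identification $z_1=z_2=z$ and the re-use of the structural lemma on a new geodesic --- since without them the exhaustive case analysis you defer to is large and is where all the difficulty lives.
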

\begin{proof}
	Assume on the contrary that $d=4$. Pick vertices $x_0, x_4$ of $\G$ such that $d(x_0,x_4)=4$. Pick a shortest path $x_0, x_1, x_2, x_3, x_4$ between $x_0$ and $x_4$. Let $D^i_j = D^i_j(x_1, x_0)$, let $\ell=\ell(x_0,x_1)$ and let $D^{\ell-1}_{\ell} = \{x_{\ell},u\}$. 
    Recall that by Lemma \ref{lem:k=4d=4_ell_ge_3} we have that $\ell=2$ and that vertex $u$ is adjacent with $x_2$ and $x_3$. Let $z$ denote a neighbour of $x_3$ in $D^3_3$ (note that by Proposition~\ref{p1}(i) vertex $x_3$ has no neighbours in $D^2_2 \cup D^3_2$).
    
    Since $W_{x_3, x_4}=\{x_3, x_2, x_1, x_0,u\}$, vertices $x_2$ and $u$ have no neighbours in $D^1_1 \cup D^2_1$. Let us denote the neighbours of $u$ and $x_2$ in $D^2_2$ by $v_1$, $v_2$, respectively. Note that $v_1 \ne v_2$, otherwise edge $ux_2$ is not balanced. Furthermore, $\{x_3,x_2,x_1,x_0,u\} =W_{x_3, x_4}$ implies that $x_4$ and $z$ are adjacent, and that $x_4$ is at distance $2$ from both $v_1$ and $v_2$. Consequently, $v_1$ and $v_2$ both have a common neighbour, say $z_1$ and $z_2$ respectively, with $x_4$, and these common neighbours must be in $D^3_3$. But as edges $x_2 x_1$ and $u x_1$ are balanced, this implies that $z_1 = z = z_2$ (see Figure~\ref{07}(b)).


Note that $v_1$ and $v_2$ both have at least one neighbour  in $D^1_1 \cup D^2_1$. Let us denote a neighbour of $v_1$ ($v_2$, respectively) in $D^1_1 \cup D^2_1$ by $w_1$ ($w_2$, respectively). If $w_1 \ne w_2$, then $\{z, v_1, v_2, w_1, w_2, x_0\} \subseteq W_{z,x_4}$, contradicting $\gamma=5$. Therefore $w_1=w_2$ and by applying Lemma \ref{lem:k=4d=4_ell_ge_3} to the path $x_0, w_1, v_1,z, x_4$ we get that vertices $v_1$ and $v_2$ are adjacent. But now it is easy to see that $W_{u, x_2}=\{u,v_1\}$, a contradiction. This finishes the proof.
\end{proof}

 \begin{proposition}
 	\label{prop:k=4d=3l=3}
  Let $\G$ denote a regular NDB graph with valency $k=4$, diameter $d = 3$ and $\gamma=\gamma(\G)=4$.  Then for every edge $x_0 x_1$ of $\G$ we have that $|D^1_2(x_1, x_0)|=|D^2_1(x_1, x_0)|=2$. 
\end{proposition}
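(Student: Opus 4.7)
The plan is to argue by contradiction, following the template of Proposition~\ref{ell2k3}. Write $D^i_j = D^i_j(x_1, x_0)$.

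First, combining Lemma~\ref{eq} with the NDB identity \eqref{er} yields $|D^1_2| = |D^2_1|$ and $|D^1_2| + |D^2_3| = 3$, while Proposition~\ref{nonempty} gives $|D^2_3| \ge 1$. To rule out $|D^1_2| = 0$, observe that for any $v \in D^2_3$ the penultimate vertex on a shortest $v$-to-$x_1$ path lies in $\G(x_1) = \{x_0\} \cup D^1_1 \cup D^1_2$, and it cannot lie in $\{x_0\} \cup D^1_1$ (otherwise $d(v,x_0) \le 2$, contradicting $d(v,x_0) = 3$); hence it lies in $D^1_2$, so $|D^1_2| \ge 1$. Therefore $|D^1_2| \in \{1,2\}$, and the task reduces to excluding $|D^1_2| = 1$.

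Suppose $|D^1_2| = 1$; then $|D^1_1| = 2$ and $|D^2_3| = |D^3_2| = 2$. Label $D^1_2 = \{x_2\}$, $D^2_1 = \{y_2\}$, $D^1_1 = \{w_1, w_2\}$, $D^2_3 = \{x_3, u\}$, and $D^3_2 = \{y_3, z\}$. The same shortest-path argument applied to each of $x_3, u$ gives $x_3, u \in \G(x_2)$, and symmetrically $y_3, z \in \G(y_2)$. Applying Proposition~\ref{nonempty} and \eqref{er} to the edge $x_1 x_2$ gives $|D^1_2(x_2, x_1)| \le 2$; since $\{x_3, u\} \subseteq D^1_2(x_2, x_1)$, equality holds, and the fourth neighbour of $x_2$ must lie in $\G(x_1) \cap \G(x_2) = D^1_1$. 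After relabelling if necessary, $\G(x_2) = \{x_1, x_3, u, w_1\}$. Applying the same reasoning to the edge $x_2 u$ (where $x_1, w_1 \in D^1_2(x_2, u)$ force $|D^1_2(x_2, u)| = 2$) yields $u \sim x_3$, so $\{x_2, x_3, u\}$ is a triangle. By the mirror analysis (swapping the roles of $x_0$ and $x_1$), $\G(y_2) = \{x_0, y_3, z, w'\}$ for some $w' \in \{w_1, w_2\}$, and $\{y_2, y_3, z\}$ is also a triangle.

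The contradiction is then extracted through a case analysis on $w' \in \{w_1, w_2\}$, combined with identification of the unique vertex $v$ of $D^2_3(x_2, x_1)$, which must be adjacent to $x_3$ or to $u$ (by a further shortest-path argument), and the analogous vertex on the mirror side. In each subcase, Lemma~\ref{pr} sharply restricts the possible neighbourhoods of $x_3, u, y_3, z, w_1, w_2$, and $v$, and applying \eqref{er} to further edges (such as $w_1 y_2$, $w_2 x_0$, and $x_3 u$) yields either a vertex with too many neighbours or an edge on which the NDB condition fails. The main obstacle is this final case analysis: although the two triangles and the near-complete neighbourhoods of $x_2$ and $y_2$ leave little freedom, tracking several NDB equations simultaneously is delicate, paralleling the analogous step in Proposition~\ref{ell2k3}.
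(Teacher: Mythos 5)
Your reduction to the case $|D^1_2|=1$ is correct, and the way you pin down $\G(x_2)$ is sound: the observation that a fourth neighbour of $x_2$ lying in $D^2_2\cup D^2_1$ would force $|D^1_2(x_2,x_1)|=3$ and hence $D^2_3(x_2,x_1)=\emptyset$, contradicting Proposition~\ref{nonempty}, cleanly disposes of two of the three locations for that neighbour in one stroke (the paper handles $z\in D^2_2$ and $z\in D^2_1$ by separate counting arguments), and your derivation of $u\sim x_3$ from the edge $x_2u$ is also valid. The problem is that after this point the proof stops. The entire content of the proposition at that stage is to exhibit a concrete contradiction from the configuration $\G(x_2)=\{x_1,x_3,u,w_1\}$, and your final paragraph only asserts that a case analysis on $w'$ together with ``several NDB equations'' will ``yield either a vertex with too many neighbours or an edge on which the NDB condition fails,'' without ever producing a specific edge, a specific count, or a specific violation. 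You even flag this step as the main obstacle. A described plan of attack is not a proof of the step; as written there is a genuine gap exactly where the argument has to close.

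For comparison, the paper's finish from your position (its $z$ is your $w_1$) is short and does not need the triangle $\{x_2,x_3,u\}$, the mirror triangle, or a case split on $w'$: first, $w_1\not\sim w_2$, since otherwise $\G(w_1)=\{x_0,x_1,x_2,w_2\}$ and $W_{w_1,x_1}$ collapses, violating $\gamma=4$; second, $w_1$ has no neighbour $v\in D^2_2$, since otherwise $\{w_1,v,x_2,u,x_3\}\subseteq W_{w_1,x_0}$, giving $|W_{w_1,x_0}|\ge 5$; hence the fourth neighbour of $w_1$ is the unique vertex $y_2$ of $D^2_1$. Then $W_{x_2,x_3}=W_{x_2,u}=\{x_2,x_1,x_0,w_1\}$ forces $d(y_2,x_3)=d(y_2,u)=2$, and a direct count gives $W_{w_1,y_2}=\{w_1,x_1,x_2\}$, contradicting $\gamma=4$. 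If you intend to keep your route through the two triangles, you must carry out the subcases to a comparably explicit conclusion; until then the argument is incomplete.
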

\begin{proof}
	Pick an edge $x_0 x_1$ of $\G$ and let $D^i_j = D^i_j(x_1, x_0)$. By Proposition~\ref{nonempty} we have that $D^2_3 \ne \emptyset$, and so $\gamma=4$ implies $|D^1_2| \le 2$. Assume to the contrary that $|D^1_2|=1$, and so $|D^2_3|=2$, $|D^1_1|=2$ and $|D^2_1|=1$. Let $x_3, u$ be vertices of $D^2_3$, and let $x_2$ be the unique vertex of $D^1_2$. Let $z$ denote the neighbour of $x_2$, different from $x_1, x_3, u$, and note that $z \in D^2_2 \cup D^2_1 \cup D^1_1$. In each of these three cases we derive a contradiction. 
	
	Assume first that $z \in D^2_2$. Then $D^1_2(x_2,x_1)=\{x_3,u,z\}$, and $\gamma=4$ forces $D^2_3(x_2,x_1)=\emptyset$, contradicting Proposition~\ref{nonempty}.
	
	Assume next that $z \in D^2_1$ (note that $z$ is the unique vertex in $D^2_1$). Then $\{x_2, z, x_3, u\} \cup D^3_2 \subseteq W_{x_2 ,x_1}$. As $D^3_2 \ne \emptyset$ by Proposition~\ref{nonempty}, this contradicts $\gamma=4$.
	
	Assume finally that $z \in D^1_1$. Recall that $|D^1_1|=2$ and denote the other vertex of $D^1_1$ by $w$. If $z$ and $w$ are adjacent, then $W_{x_1, z}=\{x_1\}$, a contradiction. If $z$ has a neighbour $v \in D^2_2$, then $\{z,v,x_2,u,x_3\} \subseteq W_{z, x_0}$, a contradiction. This shows that $z$ is adjacent to the unique vertex of $D^2_1$. Let us denote this vertex by $y_2$. As $W_{x_2, x_3}=W_{x_2, u} =\{x_2, x_1, x_0, z\}$, vertices $x_3$ and $u$ are both at distance $2$ from $y_2$. But this shows that $W_{z, y_2}=\{x_1, z, x_2\}$, a contradiction.
\end{proof}

\begin{theorem}
	\label{k44mm}
	Let $\G$ denote a regular NDB graph with valency $k=4$, diameter $d \ge 3$ and $\gamma=\gamma(\G)=d+1$.  Then $\G$ is isomorphic to the line graph of the $3$-dimensional hypercube $Q_3$.
\end{theorem}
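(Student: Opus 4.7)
The plan is to first extract the full distance--partition data valid for every edge of $\G$, then use the resulting ``one triangle per edge'' structure to pin down $|V(\G)|$, and finally to identify $\G$ with $L(Q_3)$.

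By Theorem~\ref{thm:diameter}(ii) we have $d\in\{3,4\}$, and Proposition~\ref{prop:k=4d=4} rules out $d=4$, so $d=3$ and $\gamma=4$. Proposition~\ref{prop:k=4d=3l=3} together with Lemma~\ref{eq} gives $|D^1_2(u,v)|=|D^2_1(u,v)|=2$ for every edge $uv$, and together with $k=4$ this forces $|D^1_1(u,v)|=1$. Hence $\G$ is edge-regular with $\lambda=1$: every edge lies in a unique triangle and (since $k=4$) every vertex lies in exactly two triangles, so $E(\G)$ partitions into triangles. Proposition~\ref{nonempty} together with $\gamma=4$ also yields $|D^2_3(u,v)|=|D^3_2(u,v)|=1$.

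Next, I would fix an edge $x_0x_1$ and work with the notation $D^1_1=\{z\}$, $D^1_2=\{x_2,x_2'\}$, $D^2_1=\{y_2,y_2'\}$, $D^2_3=\{x_3\}$, $D^3_2=\{y_3\}$. Uniqueness of the triangle through each edge forces the triangles $\{x_0,x_1,z\}$, $\{x_0,y_2,y_2'\}$, $\{x_1,x_2,x_2'\}$ and the second triangle $\{z,c_1,c_2\}$ at $z$; a short $\lambda=1$ argument places $c_1,c_2\in D^2_2$ and shows that $x_3$ is adjacent to exactly one of $x_2,x_2'$, symmetrically $y_3$ to exactly one of $y_2,y_2'$. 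Chasing the second triangles through $c_1,c_2,x_3,y_3$ and using $\lambda=1$ at each step to decide where the third vertex of each new triangle must sit in the partition $\{D^i_j\}$ should force $D^3_3=\emptyset$ and $|D^2_2|=3$, whence $|V(\G)|=12$.

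Once $|V(\G)|=12$ is established, the fact that $E(\G)$ partitions into triangles with exactly two triangles through every vertex (and no $K_4$, which would violate $\lambda=1$) is precisely the hypothesis of Krausz's line-graph characterization, so $\G\cong L(H)$ for a connected simple graph $H$. The valency calculation $2(\deg_H-1)=4$ together with $|E(H)|=|V(\G)|=12$ forces $H$ to be $3$-regular on $8$ vertices, and $H$ must be triangle-free (a triangle in $H$ would yield a second triangle through each of its edges in $L(H)$, contradicting $\lambda=1$). There are only two connected $3$-regular triangle-free graphs on $8$ vertices, the cube $Q_3$ and the M\"obius ladder $M_4$; a direct check shows that the cycle edges of $M_4$ give vertices of eccentricity $2$ in $L(M_4)$, violating Proposition~\ref{ecc3}. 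Hence $\G\cong L(Q_3)$.

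The main difficulty will be the middle paragraph, namely the combinatorial verification that $D^3_3=\emptyset$ and $|D^2_2|=3$. This parallels the corresponding (and slightly easier, since there $\lambda=0$) step in the $k=3$ case, but the presence of triangles in $\G$ produces more subcases; nonetheless, the NDB equality $|W_{x,y}|=4$ together with the uniqueness of the triangle through each edge should suffice to exclude every stray configuration.
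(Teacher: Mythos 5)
Your skeleton matches the paper's up to the point where edge-regularity with $\lambda=1$ is established ($d=3$, $|D^1_2|=|D^2_1|=2$, $|D^1_1|=1$, $|D^2_3|=|D^3_2|=1$), but after that you diverge in two ways, one of which is a genuine gap. The gap is the middle paragraph: you need $|V(\G)|=12$, and your plan is to prove $D^3_3=\emptyset$ and $|D^2_2|=3$ exactly by ``chasing the second triangles'', but this is precisely the hardest part of the argument and you only assert that it ``should'' work. In the paper, even the weaker statement $D^3_3=\emptyset$ requires a careful case analysis (splitting on whether a putative $t\in D^3_3$ is adjacent to $z_1,z_2$, to some other vertex of $D^2_2$, or only to $x_3$ and $y_3$, and using Proposition~\ref{p2}(i) to bound the neighbours of $x_3$ in $D^3_3\cup D^3_2$). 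Moreover, you are making your life harder than necessary: the paper never pins down $|D^2_2|$ exactly. It counts triangles --- $\G$ has $|V(\G)|k/6=2|V(\G)|/3$ of them, so $3$ divides $|V(\G)|$ --- and then only needs the crude bounds $2\le |D^2_2|\le 5$ (each vertex of $D^2_2\setminus\{z_1,z_2\}$ must have a neighbour in $D^1_2$, and the two vertices of $D^1_2$ have few available slots), giving $11\le|V(\G)|\le 14$ and hence $|V(\G)|=12$. Without that divisibility trick, your route forces you to prove $|D^2_2|=3$ on the nose, which would require noticeably more casework than you have indicated.

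Your endgame, on the other hand, is a genuinely different and arguably more self-contained route. The paper closes by citing the classification of edge-regular graphs on $12$ vertices with $\lambda=1$ from \cite{GHJR} and discarding the line graph of the M\"obius ladder because it is not distance-balanced. You instead invoke Krausz's characterization (the triangles partition $E(\G)$ with exactly two through each vertex, so $\G\cong L(H)$), deduce that $H$ is cubic, triangle-free, connected, on $8$ vertices, hence $H\cong Q_3$ or $H\cong M_4$, and kill $L(M_4)$ via Proposition~\ref{ecc3} since its vertices coming from cycle edges have eccentricity $2$. All of these steps check out (in particular, a triangle in $H$ would put an edge of $L(H)$ into two triangles, contradicting $\lambda=1$), and this replaces an external classification result by elementary facts; it is a legitimate improvement in self-containedness. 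But it only becomes a proof once the $|V(\G)|=12$ step is actually carried out.
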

\begin{proof}
	By Theorem~\ref{thm:diameter}(ii) and Proposition~\ref{prop:k=4d=4} we have that $d =3$. Pick an arbitrary edge $xy$ of $\G$. By Proposition~\ref{prop:k=4d=3l=3} we have that $|D^1_2(x,y)|=|D^2_1(x,y)|=2$. Consequently $|D^1_1(x,y)|=1$, and so $\G$ is an edge-regular graph with $\lambda=1$. Observe that $\gamma=4$ also implies that $|D^2_3(x,y)|=|D^3_2(x,y)|=1$. Observe that $\G$ contains $|V(\G)| k /6 = 2|V(\G)|/3$ triangles, and so $|V(\G)|$ is divisible by $3$.
	
Pick vertices $x_0, x_3$ of $\G$ at distance $3$ and let $x_0,x_1,x_2,x_3$ be a shortest path from $x_0$ to $x_3$.  Abbreviate $D^i_j=D^i_j(x_1,x_0)$. Obviously $D^2_3=\{x_3\}$ and $x_2 \in D^1_2$. Let us denote the other vertex of $D^1_2$ by $u$, the vertices of $D^2_1$ by $y_2, v$, the vertex of $D^3_2$ by $y_3$ and the vertex of $D^1_1$ by $w$. Without loss of generality we may assume that $y_2$ and $y_3$ are adjacent. Since $\G$ is edge-regular with $\lambda=1$, we also obtain that $x_2$ and $u$ are adjacent, that $y_2$ and $v$ are adjacent, and that $w$ has two neighbours, say $z_1$ and $z_2$, in $D^2_2$, and that $z_1, z_2$ are also adjacent. As $W_{x_2, x_3}=\{x_2,x_1,x_0,u\}$, $x_3$ is at distance $2$ from $w$, and so $x_3$ is adjacent to exactly one of $z_1, z_2$, Without loss of generality we could assume that $x_3$ and $z_1$ are adjacent. 
	
Note that $\G(w)=\{x_0,x_1,z_1,z_2\}$, and so $x_2$ and $w$ are not adjacent. {Vertex $x_2$ is also not adjacent to $y_2$, as otherwise edge $x_2 y_2$ is not contained in a triangle. If $x_2\sim v$ then $v\sim u$ and the edge $ux_2$ is contained in two triangles, contradicting $\lambda=1$. It follows that $x_2$ has no neighbours in $D^2_1$. Therefore, $x_2$ has a neighbour in $D^2_2$. }Consequently, by Proposition~\ref{p2}(i), $x_3$ could have at most one neighbour in $D^3_3 \cup D^3_2$. 

We now show that $D_3^3 = \emptyset$. Assume to the contrary that there exists $t \in D^3_3$. If $t$ is adjacent to $z_1$ or $z_2$, then $\{w,z_1,z_2,x_3,t\} \subseteq W_{w, x_0}$, a contradiction. If $t$ is adjacent with $z \in D^2_2 \setminus \{z_1, z_2\}$, then $z$ has a neighbour in $D^1_2$ and a neighbour in $D^2_1$, implying that $|W_{z, t}| \ge 5$, a contradiction. It follows that $t$ has no neighbours in $D^2_2$, and so $t$ is adjacent with $x_3$ (and with $y_3)$. Now the unique common neighbour of $x_3$ and $t$ must be contained in $D^3_3 \cup D^3_2$, contradicting the fact that $x_3$ could have at most one neighbour in $D^3_3 \cup D^3_2$. This shows that $D^3_3 = \emptyset$. 
	
	Let us now estimate the cardinality of $D^2_2$. Observe that each $z  \in D^2_2 \setminus \{z_1, z_2\}$ has a neighbour in $D^1_2$. But $u$ could have at most two neighbours in $D^2_2$, while $x_2$ has exactly one neighbour in $D^2_2$. It follows that $2 \le |D^2_2| \le 5$, and so $11 \le |V(\G)| \le 14$. As $|V(\G)|$ is divisible by 3, we have that $|V(\G)|=12$. By \cite[Corollary 6]{GHJR}, there are just two edge-regular graphs on 12 vertices with $\lambda=1$, namely the line graph of $3$-dimensional hypercube (see Figure~\ref{09}), and the line graph of the M\"obius ladder graph on eight vertices. It is easy to see that the latter one is not even distance-balanced.
\end{proof}

{\small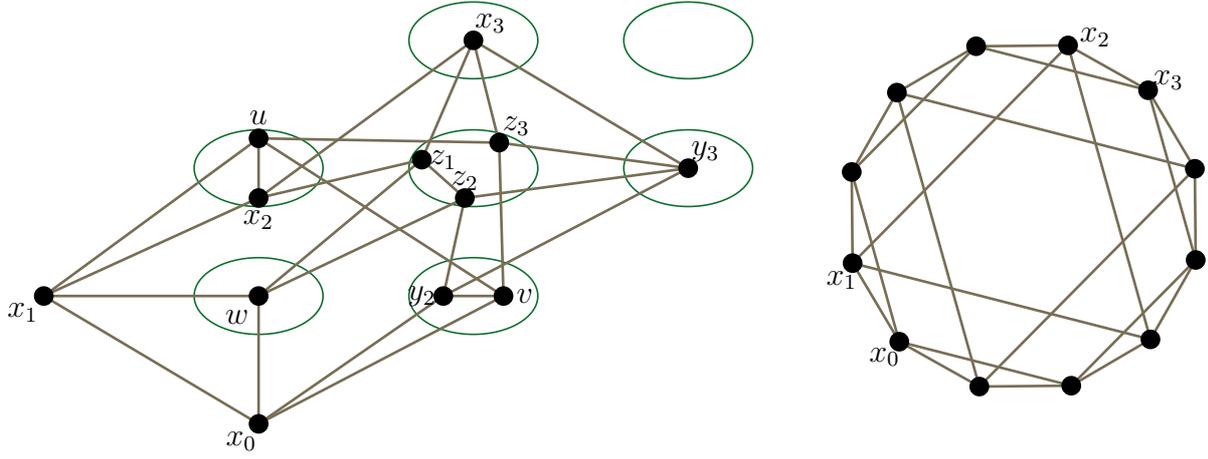
\begin{figure}[t]{\rm
\begin{center}
\begin{tikzpicture}[scale=.565]
\draw[fill=white, draw=ForestGreen, line width=0.6pt] (8.,-1.) ellipse (1.5cm and .9cm);
\draw[fill=white, draw=ForestGreen, line width=0.6pt] (13.,-1.) ellipse (1.5cm and .9cm);
\draw[fill=white, draw=ForestGreen, line width=0.6pt] (8.,-4.) ellipse (1.5cm and .9cm);
\draw[fill=white, draw=ForestGreen, line width=0.6pt] (13.,-4.) ellipse (1.5cm and .9cm);
\draw[fill=white, draw=ForestGreen, line width=0.6pt] (8.,-7.) ellipse (1.5cm and .9cm);
\draw[fill=white, draw=ForestGreen, line width=0.6pt] (3.,-7.) ellipse (1.5cm and .9cm);
\draw[fill=white, draw=ForestGreen, line width=0.6pt] (3.,-4.) ellipse (1.5cm and .9cm);


\draw [line width=1pt, draw=ForestGreenTwo] (8,-1)-- (13,-4);
\draw [line width=1pt, draw=ForestGreenTwo] (-2,-7)-- (3,-10);
\draw [line width=1pt, draw=ForestGreenTwo] (-2,-7)-- (3,-7);
\draw [line width=1pt, draw=ForestGreenTwo] (3,-7)-- (3,-10);
\draw [line width=1pt, draw=ForestGreenTwo] (-2,-7)-- (3,-3.3); 
\draw [line width=1pt, draw=ForestGreenTwo] (-2,-7)-- (3,-4.7); 
\draw [line width=1pt, draw=ForestGreenTwo] (3,-3.3)-- (3,-4.7); 
\draw [line width=1pt, draw=ForestGreenTwo] (8,-1)-- (3,-4.7); 
\draw [line width=1pt, draw=ForestGreenTwo] (3,-10)-- (8.7,-7); 
\draw [line width=1pt, draw=ForestGreenTwo] (3,-10)-- (7.3,-7); 
\draw [line width=1pt, draw=ForestGreenTwo] (8.7,-7)-- (7.3,-7); 
\draw [line width=1pt, draw=ForestGreenTwo] (13,-4)-- (7.3,-7); 
\draw [line width=1pt, draw=ForestGreenTwo] (8.6,-3.4)-- (3,-3.3); 
\draw [line width=1pt, draw=ForestGreenTwo] (8.6,-3.4)-- (8.7,-7); 
\draw [line width=1pt, draw=ForestGreenTwo] (8.6,-3.4)-- (8,-1); 
\draw [line width=1pt, draw=ForestGreenTwo] (8.6,-3.4)-- (13,-4); 
\draw [line width=1pt, draw=ForestGreenTwo] (3,-3.3)-- (8.7,-7); 
\draw [line width=1pt, draw=ForestGreenTwo] (3,-7)-- (6.8,-3.8); 
\draw [line width=1pt, draw=ForestGreenTwo] (3,-7)-- (7.8,-4.7); 
\draw [line width=1pt, draw=ForestGreenTwo] (6.8,-3.8)-- (7.8,-4.7); 
\draw [line width=1pt, draw=ForestGreenTwo] (3,-4.7)-- (6.8,-3.8); 
\draw [line width=1pt, draw=ForestGreenTwo] (8,-1)-- (6.8,-3.8); 
\draw [line width=1pt, draw=ForestGreenTwo] (7.3,-7)-- (7.8,-4.7); 
\draw [line width=1pt, draw=ForestGreenTwo] (13,-4)-- (7.8,-4.7); 

\fill (3,-7) circle [radius=0.23]; 
\fill (8,-1) circle [radius=0.23]; 
\fill (13,-4) circle [radius=0.23]; 
\fill (-2,-7) circle [radius=0.23]; 
\fill (3,-10) circle [radius=0.23]; 
\node at (-2.1,-7,1) {\normalsize $x_1$};
\node at (2.6,-10.4) {\normalsize $x_0$};
\node at (3,-2.8) {\normalsize $u$};
\node at (9.2,-7) {\normalsize $v$};
\fill (3,-4.7) circle [radius=0.23]; 
\fill (3,-3.3) circle [radius=0.23]; 
\fill (8.7,-7) circle [radius=0.23]; 
\fill (7.3,-7) circle [radius=0.23]; 
\fill (8.6,-3.4) circle [radius=0.23]; 
\fill (6.8,-3.8) circle [radius=0.23]; 
\fill (7.8,-4.7) circle [radius=0.23]; 

\node at (2.5,-7.5) {\normalsize $w$}; 
\node at (8.4,-.6) {\normalsize $x_3$}; 
\node at (13.4,-3.6) {\normalsize $y_3$};
\node at (6.8,-7) {\normalsize $y_2$};
\node at (3,-5.2) {\normalsize $x_2$};
\node at (9,-3) {\normalsize $z_3$};
\node at (7.3,-3.8) {\normalsize $z_1$};
\node at (7.8,-4.3) {\normalsize $z_2$};
\end{tikzpicture}\qquad
\begin{tikzpicture}[scale=.565]
\draw[line width=1pt, draw=ForestGreenTwo] (-1.0746410161513757,3.986588728197397)--(2.928653347947322,2.953909236273084);
\draw[line width=1pt, draw=ForestGreenTwo] (-2.917935380250075,2.8992682201217095)--(4.01597385602301,1.1106148721743858);
\draw[line width=1pt, draw=ForestGreenTwo] (-3.950614872174388,-1.1040261439769874)--(2.9832943640986977,-2.8926794919243117);
\draw[line width=1pt, draw=ForestGreenTwo] (-2.8632943640987003,-2.9473205080756872)--(1.14,-3.98);
\draw[line width=1pt, draw=ForestGreenTwo] (-1.,-4.)--(-2.917935380250075,2.8992682201217095);
\draw[line width=1pt, draw=ForestGreenTwo] (-1.,-4.)--(4.01597385602301,1.1106148721743858);
\draw[line width=1pt, draw=ForestGreenTwo] (1.0653589838486228,4.006588728197396)--(-3.950614872174388,-1.1040261439769874);
\draw[line width=1pt, draw=ForestGreenTwo] (1.0653589838486228,4.006588728197396)--(2.9832943640986977,-2.8926794919243117);
\draw[line width=1pt, draw=ForestGreenTwo] (4.035973856023009,-1.029385127825613)--(2.928653347947322,2.953909236273084);
\draw[line width=1pt, draw=ForestGreenTwo] (4.035973856023009,-1.029385127825613)--(1.14,-3.98);
\draw[line width=1pt, draw=ForestGreenTwo] (-3.9706148721743872,1.03597385602301)--(-1.0746410161513757,3.986588728197397);
\draw[line width=1pt, draw=ForestGreenTwo] (-3.9706148721743872,1.03597385602301)--(-2.8632943640987003,-2.9473205080756872);
\draw[line width=1pt, draw=ForestGreenTwo] (-1.,-4.)--(1.14,-3.98);
\draw[line width=1pt, draw=ForestGreenTwo] (1.14,-3.98)--(2.9832943640986977,-2.8926794919243117);
\draw[line width=1pt, draw=ForestGreenTwo] (2.9832943640986977,-2.8926794919243117)--(4.035973856023009,-1.029385127825613);
\draw[line width=1pt, draw=ForestGreenTwo] (4.035973856023009,-1.029385127825613)--(4.01597385602301,1.1106148721743858);
\draw[line width=1pt, draw=ForestGreenTwo] (4.01597385602301,1.1106148721743858)--(2.928653347947322,2.953909236273084);
\draw[line width=1pt, draw=ForestGreenTwo] (2.928653347947322,2.953909236273084)--(1.0653589838486228,4.006588728197396);
\draw[line width=1pt, draw=ForestGreenTwo] (1.0653589838486228,4.006588728197396)--(-1.0746410161513757,3.986588728197397);
\draw[line width=1pt, draw=ForestGreenTwo] (-1.0746410161513757,3.986588728197397)--(-2.917935380250075,2.8992682201217095);
\draw[line width=1pt, draw=ForestGreenTwo] (-2.917935380250075,2.8992682201217095)--(-3.9706148721743872,1.03597385602301);
\draw[line width=1pt, draw=ForestGreenTwo] (-3.9706148721743872,1.03597385602301)--(-3.950614872174388,-1.1040261439769874);
\draw[line width=1pt, draw=ForestGreenTwo] (-3.950614872174388,-1.1040261439769874)--(-2.8632943640987003,-2.9473205080756872);
\draw[line width=1pt, draw=ForestGreenTwo] (-2.8632943640987003,-2.9473205080756872)--(-1.,-4.);
\fill (-1.,-4.) circle [radius=0.23];
\fill (1.14,-3.98) circle [radius=0.23];
\fill (2.9832943640986977,-2.8926794919243117) circle [radius=0.23];
\fill (4.035973856023009,-1.029385127825613) circle [radius=0.23];
\fill (4.01597385602301,1.1106148721743858) circle [radius=0.23];
\fill (2.928653347947322,2.953909236273084) circle [radius=0.23];
\fill (1.0653589838486228,4.006588728197396) circle [radius=0.23];
\fill (-1.0746410161513757,3.986588728197397) circle [radius=0.23];
\fill (-2.917935380250075,2.8992682201217095) circle [radius=0.23];
\fill (-3.9706148721743872,1.03597385602301) circle [radius=0.23];
\fill (-3.950614872174388,-1.1040261439769874) circle [radius=0.23];
\fill (-2.8632943640987003,-2.9473205080756872) circle [radius=0.23];
\fill[color=white] (-2,-5.5) circle [radius=0.23];
\node at (-4.2,-1.5) {\normalsize $x_1$}; 
\node at (-3.2,-3.3) {\normalsize $x_0$}; 
\node at (1.7,4.2) {\normalsize $x_2$}; 
\node at (3.4,3.2) {\normalsize $x_3$}; 
\end{tikzpicture}

\caption{\rm 
The line graph of $Q_3$, drawn on two different ways.
}
\label{09}
\end{center}
}\end{figure}}

 
 \section{Case $k=5$}
 \label{sec:k=5}
 
 Let $\G$ denote a regular NDB graph with valency $k=5$, diameter $d \ge 3$ and $\gamma=\gamma(\G)=d+1$. Recall that by Theorem~\ref{thm:diameter} we have $d=3$, and so $\gamma=4$. In this section we classify such  NDB graphs. We first show that in this case we have $|D^1_2(x_1,x_0)| = |D^2_1(x_1,x_0)|=2$ for every edge $x_1 x_0$ of $\G$.

 \begin{proposition}
 	\label{k=5:prop1}
 	Let $\G$ denote a regular NDB graph with valency $k=5$, diameter $d=3$ and $\gamma=4$. Then for every edge $x_0 x_1$ of $\G$ we have that $|D^1_2(x_1, x_0)|=|D^2_1(x_1, x_0)|=2$. 
 \end{proposition}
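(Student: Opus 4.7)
The plan is to mimic the structure of Propositions~\ref{ell2k3} and~\ref{prop:k=4d=3l=3} and argue by contradiction, but to exploit the larger valency $k=5$ so that the contradiction can be extracted directly from Propositions~\ref{p1}(ii) and~\ref{nonempty} without the delicate case analysis that was needed for $k=3$ and $k=4$. Pick an arbitrary edge $x_0x_1$ of $\G$ and abbreviate $D^i_j=D^i_j(x_1,x_0)$. First I would record the elementary consequences of the NDB hypothesis for this edge: the identity \eqref{er} gives $|D^1_2|+|D^2_3|=\gamma-1=3$, Lemma~\ref{eq} gives $|D^2_1|=|D^1_2|$, and Proposition~\ref{nonempty} applied to $x_0x_1$ forces $|D^2_3|\ge 1$, so $|D^1_2|\le 2$. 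Thus it only remains to rule out the possibility $|D^1_2|=1$.

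Suppose, for contradiction, that $|D^1_2|=1$. Then automatically $|D^2_1|=1$, $|D^2_3|=|D^3_2|=2$, and since $\G$ is $5$-regular we also obtain $|D^1_1|=3$. Let $x_2$ be the unique vertex of $D^1_2$ and write $D^2_3=\{x_3,u\}$. Because $x_2$ is the only vertex of $D^1_2$, both $x_3$ and $u$ are forced to be neighbours of $x_2$. Extending $x_0,x_1,x_2$ to the shortest path $x_0,x_1,x_2,x_3$ of length $d=3$ places us exactly in the setup of Section~\ref{sec:struc} (with $\ell=3$, and with the vertex $u$ of that setup matching the $u$ chosen here), so Proposition~\ref{p1}(ii) becomes available and gives $|\G(x_2)\cap(D^1_1\cup D^2_1)|\le 1$.

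Now I count the neighbours of $x_2$. Three of them, namely $x_1,x_3,u$, are already identified, and none of them lies in $D^1_1\cup D^2_1\cup D^2_2$. Since $x_2\in D^1_2$, Lemma~\ref{pr}(iii) forces the remaining $k-3=2$ neighbours of $x_2$ to lie in $D^1_1\cup D^2_1\cup D^2_2$. By Proposition~\ref{p1}(ii), at most one of these two neighbours belongs to $D^1_1\cup D^2_1$, so at least one, call it $z$, must lie in $D^2_2$. Since $z\sim x_2$ and $d(z,x_1)=2$, we have $z\in D^1_2(x_2,x_1)$; the analogous observation puts $x_3$ and $u$ in $D^1_2(x_2,x_1)$ as well, so $|D^1_2(x_2,x_1)|\ge 3$.

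Finally, applying the NDB identity \eqref{er} to the edge $x_1x_2$ yields $|D^1_2(x_2,x_1)|+|D^2_3(x_2,x_1)|=3$, while Proposition~\ref{nonempty} applied to this same edge gives $|D^2_3(x_2,x_1)|\ge 1$; together these force $|D^1_2(x_2,x_1)|\le 2$, contradicting the lower bound of the previous paragraph. The main step that I would want to double-check carefully is the application of Proposition~\ref{p1}(ii) -- i.e.\ verifying that the hypotheses of the Section~\ref{sec:struc} setup are genuinely in force, and that $x_3,u,z$ are pairwise distinct (which follows because $D^2_3\cap D^2_2=\emptyset$ and $|D^2_3|=2$). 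Once this is in place, the contradiction is immediate, which explains why the statement admits a much shorter proof than its $k=3$ and $k=4$ counterparts: the extra valency provides just enough room for the pigeonhole to force a neighbour of $x_2$ into $D^2_2$, and this single extra vertex is all that is needed to violate the NDB identity for the edge $x_1x_2$.
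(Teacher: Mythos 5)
Your proof is correct, and it takes a genuinely different and considerably shorter route than the paper's. Both arguments begin the same way (reduce to ruling out $|D^1_2|=1$, so that $|D^2_3|=2$, $|D^2_1|=1$, $|D^1_1|=3$, and both vertices $x_3,u$ of $D^2_3$ are forced to be neighbours of the unique vertex $x_2$ of $D^1_2$). From there the paper runs a structural analysis: it shows every edge lies in a triangle, locates the unique neighbour $z_1$ of $x_2$ in $D^1_1$, produces vertices $w,w'\in D^2_2$ attached to $x_2$ and to the unique vertex $y_2$ of $D^2_1$ respectively, deduces that $z_1\sim y_2$, and finally contradicts $\gamma=4$ via the set $W_{y_2,y_3}$. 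You instead apply the pigeonhole to $\G(x_2)$: by Lemma~\ref{pr}(iii) the two neighbours of $x_2$ other than $x_1,x_3,u$ lie in $D^1_1\cup D^2_1\cup D^2_2$, Proposition~\ref{p1}(ii) puts at most one of them in $D^1_1\cup D^2_1$, so some $z\in\G(x_2)\cap D^2_2$ exists; then $x_3,u,z$ are three distinct vertices of $D^1_2(x_2,x_1)$, which together with \eqref{er} for the edge $x_1x_2$ forces $D^2_3(x_2,x_1)=\emptyset$, contradicting Proposition~\ref{nonempty}. All the ingredients you invoke are proved before this proposition in the paper, so there is no circularity, and the application of Proposition~\ref{p1}(ii) is legitimate because $x_0,x_1,x_2,x_3$ is a genuine geodesic of length $d=3$ (using Proposition~\ref{nonempty} to guarantee $D^2_3\ne\emptyset$). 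It is worth noting that the paper's own proof in fact passes through the observation $W_{x_2,x_1}=\{x_2,x_3,u,w\}$, which already entails $D^2_3(x_2,x_1)=\emptyset$ and hence contradicts Proposition~\ref{nonempty} on the spot; your argument isolates exactly this shortcut, and what it buys is the elimination of the entire second half of the paper's case analysis (the vertices $y_2,y_3,u_1,w'$ play no role in your proof).
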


\begin{proof}
Pick an edge $x_0 x_1$ of $\G$ and let $D^i_j = D^i_j(x_1, x_0)$. By Proposition~\ref{nonempty} we have that $D^2_3 \ne \emptyset$, and so $\gamma=4$ implies $|D^1_2| \le 2$. Assume to the contrary that $|D^1_2|=1$, and so $|D^2_3|=2$, $|D^1_1|=3$ and $|D^2_1|=1$. Let $x_3, u$ be vertices of $D^2_3$, and let $x_2$ be the unique vertex of $D^1_2$. Let us denote the unique vertex of $D^2_1$ by $y_2$, and the vertices of $D^1_1$ by $z_1, z_2, z_3$. Note that also $|D^3_2|=2$, and let us denote these two vertices by $y_3,u_1$. Clearly we have that $x_2$ is adjacent to both $x_3$ and $u$, and $y_2$ is adjacent to both $y_3$ and $u_1$, see the diagram on the left side of Figure \ref{G2}.

{\small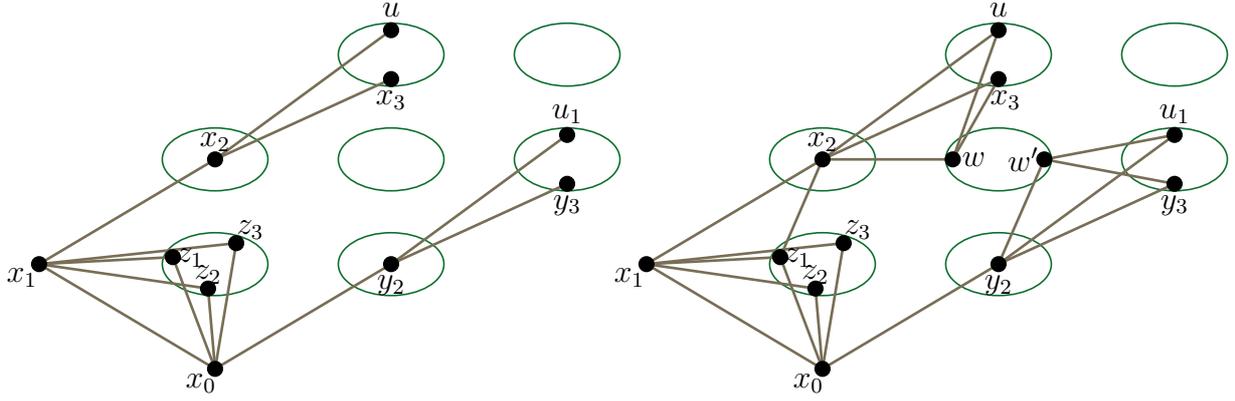
\begin{figure}[t]{\rm
\begin{center}
\begin{tikzpicture}[scale=.463]
\draw[fill=white, draw=ForestGreen, line width=0.6pt] (8.,-1.) ellipse (1.5cm and .9cm);
\draw[fill=white, draw=ForestGreen, line width=0.6pt] (13.,-1.) ellipse (1.5cm and .9cm);
\draw[fill=white, draw=ForestGreen, line width=0.6pt] (8.,-4.) ellipse (1.5cm and .9cm);
\draw[fill=white, draw=ForestGreen, line width=0.6pt] (13.,-4.) ellipse (1.5cm and .9cm);
\draw[fill=white, draw=ForestGreen, line width=0.6pt] (8.,-7.) ellipse (1.5cm and .9cm);
\draw[fill=white, draw=ForestGreen, line width=0.6pt] (3.,-7.) ellipse (1.5cm and .9cm);
\draw[fill=white, draw=ForestGreen, line width=0.6pt] (3.,-4.) ellipse (1.5cm and .9cm);

\draw [line width=1pt, draw=ForestGreenTwo] (3,-10)-- (-2,-7);
\draw [line width=1pt, draw=ForestGreenTwo] (3,-10)-- (1.8,-6.8);
\draw [line width=1pt, draw=ForestGreenTwo] (3,-10)-- (2.8,-7.7);
\draw [line width=1pt, draw=ForestGreenTwo] (3,-10)-- (3.6,-6.4);
\draw [line width=1pt, draw=ForestGreenTwo] (-2,-7)-- (1.8,-6.8);
\draw [line width=1pt, draw=ForestGreenTwo] (-2,-7)-- (2.8,-7.7);
\draw [line width=1pt, draw=ForestGreenTwo] (-2,-7)-- (3.6,-6.4);
\draw [line width=1pt, draw=ForestGreenTwo] (-2,-7)-- (3,-4);
\draw [line width=1pt, draw=ForestGreenTwo] (3,-10)-- (8,-7);
\draw [line width=1pt, draw=ForestGreenTwo] (8,-0.3)-- (3,-4);
\draw [line width=1pt, draw=ForestGreenTwo] (8,-1.7)-- (3,-4);
\draw [line width=1pt, draw=ForestGreenTwo] (8,-7)-- (13,-4.7);
\draw [line width=1pt, draw=ForestGreenTwo] (8,-7)-- (13,-3.3);


\fill (3,-10) circle [radius=0.23]; 
\fill (-2,-7) circle [radius=0.23]; 
\node at (-2.1,-7,1) {\normalsize $x_1$};
\node at (2.6,-10.4) {\normalsize $x_0$};

\fill (3,-4) circle [radius=0.23]; 
\fill (8,-7) circle [radius=0.23]; 
\node at (3,-3.5) {\normalsize $x_2$};
\node at (8,-7.6) {\normalsize $y_2$};

\fill (3.6,-6.4) circle [radius=0.23]; 
\fill (1.8,-6.8) circle [radius=0.23]; 
\fill (2.8,-7.7) circle [radius=0.23]; 
\node at (2.3,-6.8) {\normalsize $z_1$};
\node at (2.8,-7.3) {\normalsize $z_2$};
\node at (4,-6) {\normalsize $z_3$};

\fill (8,-0.3) circle [radius=0.23]; 
\fill (8,-1.7) circle [radius=0.23]; 
\node at (8,0.3) {\normalsize $u$};
\node at (8,-2.3) {\normalsize $x_3$};

\fill (13,-3.3) circle [radius=0.23]; 
\fill (13,-4.7) circle [radius=0.23]; 
\node at (13,-2.7) {\normalsize $u_1$};
\node at (13,-5.3) {\normalsize $y_3$};

\end{tikzpicture}
\hspace{-5mm}
\begin{tikzpicture}[scale=.463]
\draw[fill=white, draw=ForestGreen, line width=0.6pt] (8.,-1.) ellipse (1.5cm and .9cm);
\draw[fill=white, draw=ForestGreen, line width=0.6pt] (13.,-1.) ellipse (1.5cm and .9cm);
\draw[fill=white, draw=ForestGreen, line width=0.6pt] (8.,-4.) ellipse (1.5cm and .9cm);
\draw[fill=white, draw=ForestGreen, line width=0.6pt] (13.,-4.) ellipse (1.5cm and .9cm);
\draw[fill=white, draw=ForestGreen, line width=0.6pt] (8.,-7.) ellipse (1.5cm and .9cm);
\draw[fill=white, draw=ForestGreen, line width=0.6pt] (3.,-7.) ellipse (1.5cm and .9cm);
\draw[fill=white, draw=ForestGreen, line width=0.6pt] (3.,-4.) ellipse (1.5cm and .9cm);

\draw [line width=1pt, draw=ForestGreenTwo] (3,-10)-- (-2,-7);
\draw [line width=1pt, draw=ForestGreenTwo] (3,-10)-- (1.8,-6.8);
\draw [line width=1pt, draw=ForestGreenTwo] (3,-10)-- (2.8,-7.7);
\draw [line width=1pt, draw=ForestGreenTwo] (3,-10)-- (3.6,-6.4);
\draw [line width=1pt, draw=ForestGreenTwo] (-2,-7)-- (1.8,-6.8);
\draw [line width=1pt, draw=ForestGreenTwo] (-2,-7)-- (2.8,-7.7);
\draw [line width=1pt, draw=ForestGreenTwo] (-2,-7)-- (3.6,-6.4);
\draw [line width=1pt, draw=ForestGreenTwo] (-2,-7)-- (3,-4);
\draw [line width=1pt, draw=ForestGreenTwo] (3,-10)-- (8,-7);
\draw [line width=1pt, draw=ForestGreenTwo] (8,-0.3)-- (3,-4);
\draw [line width=1pt, draw=ForestGreenTwo] (8,-1.7)-- (3,-4);
\draw [line width=1pt, draw=ForestGreenTwo] (8,-7)-- (13,-4.7);
\draw [line width=1pt, draw=ForestGreenTwo] (8,-7)-- (13,-3.3);
\draw [line width=1pt, draw=ForestGreenTwo] (6.7,-4)-- (3,-4);
\draw [line width=1pt, draw=ForestGreenTwo] (6.7,-4)-- (8,-1.7);
\draw [line width=1pt, draw=ForestGreenTwo] (6.7,-4)-- (8,-0.3);
\draw [line width=1pt, draw=ForestGreenTwo] (3,-4)-- (1.8,-6.8);
\draw [line width=1pt, draw=ForestGreenTwo] (9.3,-4)-- (8,-7);
\draw [line width=1pt, draw=ForestGreenTwo] (9.3,-4)-- (13,-4.7);
\draw [line width=1pt, draw=ForestGreenTwo] (9.3,-4)-- (13,-3.3);

\fill (6.7,-4) circle [radius=0.23]; 
\fill (9.3,-4) circle [radius=0.23]; 
\node at (7.3,-4) {\normalsize $w$};
\node at (8.7,-4) {\normalsize $w'$};

\fill (3,-10) circle [radius=0.23]; 
\fill (-2,-7) circle [radius=0.23]; 
\node at (-2.1,-7,1) {\normalsize $x_1$};
\node at (2.6,-10.4) {\normalsize $x_0$};
\fill (3,-4) circle [radius=0.23]; 
\fill (8,-7) circle [radius=0.23]; 
\node at (3,-3.5) {\normalsize $x_2$};
\node at (8,-7.6) {\normalsize $y_2$};
\fill (3.6,-6.4) circle [radius=0.23]; 
\fill (1.8,-6.8) circle [radius=0.23]; 
\fill (2.8,-7.7) circle [radius=0.23]; 
\node at (2.3,-6.8) {\normalsize $z_1$};
\node at (2.8,-7.3) {\normalsize $z_2$};
\node at (4,-6) {\normalsize $z_3$};
\fill (8,-0.3) circle [radius=0.23]; 
\fill (8,-1.7) circle [radius=0.23]; 
\node at (8,0.3) {\normalsize $u$};
\node at (8.2,-2.3) {\normalsize $x_3$};
\fill (13,-3.3) circle [radius=0.23]; 
\fill (13,-4.7) circle [radius=0.23]; 
\node at (13,-2.7) {\normalsize $u_1$};
\node at (13,-5.3) {\normalsize $y_3$};

\end{tikzpicture}
\caption{\rm 
Graph $\G$ from Proposition~\ref{k=5:prop1}.
}
\label{G2}
\end{center}
}\end{figure}}

\noindent
Observe that each edge $xy$ of $\G$ is contained in at least one triangle; otherwise $|W_{x,y}| \ge 5 > \gamma$, a contradiction. Therefore, $x_2$ and $y_2$ both have at least one neighbour in $D^1_1$. On the other hand, these two vertices could not have more than one neighbour in $D^1_1$, as otherwise $|W_{x_2, x_3}| \ge 5$ ($|W_{y_2, y_3}| \ge 5$, respectively), a contradiction. Without loss of generality we could assume that $z_1$ is the unique neighbour of $x_2$ in $D^1_1$. Note that it follows from Proposition~\ref{p1}(ii) that $x_2$ and $y_2$ are not adjacent. This shows that $x_2$ has a unique neighbour (say $w$) in $D^2_2$. As $W_{x_2, x_3} = W_{x_2, u}=\{x_2, x_1, x_0,z_1\}$, vertex $w$ is adjacent to both $u$ and $x_3$. Similarly we prove that also $y_2$ has a unique neighbour in $D^2_2$, say $w'$, and that $w'$ is adjacent to both $u_1$ and $y_3$. If $w=w'$, then the degree of $w$ is at least 6, a contradiction. Therefore, $w \ne w'$, see the diagram on the right side of Figure \ref{G2}.

 	{Note that $W_{x_2, x_1} = \{x_2,x_3,u,w\}$, and so both $y_3$ and $u_1$ are at distance 3 from $x_2$. Similarly, $W_{x_1, x_2} = \{x_1,x_0,z_2, z_3\}$, and so $y_2$ is at distance $2$ from $x_2$.  Therefore $y_2$ and $x_2$ have a common neighbour, and by the comments above the only possible common neighbour is $z_1$. It follows that $z_1$ and $y_2$ are adjacent. But now $\{y_2, x_0, x_1, z_1, x_2\} \subseteq W_{y_2,  y_3}$ (recall that $d(x_2,y_3)=3$), a contradiction. This shows that $|D^1_2|=2$. By Lemma \ref{eq} we obtain that $|D^2_1|=2$ as well.}
 \end{proof}

\begin{theorem}
	\label{k=5:thm1}
		Let $\G$ denote a regular NDB graph with valency $k=5$, diameter $d \ge 3$ and $\gamma=d+1$.Then $\G$ is isomorphic to the icosahedron.
\end{theorem}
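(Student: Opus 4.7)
The plan is to follow the pattern of Theorems \ref{thm:k3d3} and \ref{k44mm}. By Theorem \ref{thm:diameter}(iii) we have $d=3$ and $\gamma=4$. Proposition \ref{k=5:prop1} combined with $k=5$ gives $|D^1_2(x,y)|=|D^2_1(x,y)|=2$ and hence $|D^1_1(x,y)|=2$ for every edge $xy$, so $\G$ is edge-regular with $\lambda=2$. The NDB condition \eqref{er} together with Lemma \ref{eq} further forces $|D^2_3(x,y)|=|D^3_2(x,y)|=1$ for every edge.

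Next I would fix vertices $x_0,x_3$ at distance $3$, a geodesic $x_0,x_1,x_2,x_3$, and abbreviate $D^i_j=D^i_j(x_1,x_0)$. The goal is to establish that $|V(\G)|=12$, which amounts to proving $|D^2_2|=2$ and $D^3_3=\emptyset$. To rule out $D^3_3$, I would assume $t\in D^3_3$: all five neighbours of $t$ lie in $D^2_2\cup D^2_3\cup D^3_2\cup D^3_3$, and since $|D^2_3|=|D^3_2|=1$, at least three of them lie in $D^2_2\cup D^3_3$. Examining the balanced-edge constraint for the edge joining $t$ to one such neighbour $z$, together with $\lambda=2$ and the pigeon-hole on the neighbours of $z$ among $\{x_0,x_1\}\cup D^1_1\cup D^1_2\cup D^2_1$, would produce a violation of $\gamma=4$. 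To pin down $|D^2_2|$, I would use that each of its vertices has at least one neighbour in $D^1_2=\{x_2,u\}$ and at least one in $D^2_1$; the number of such neighbours of $x_2$ and $u$ is tightly controlled by $k=5$, by edge-regularity $\lambda=2$ at the edges $x_1x_2$ and $x_1u$, and by balancedness of these edges. A short case analysis would yield $|D^2_2|=2$.

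This gives $|V(\G)|=2+2+2+2+2+1+1=12$. To finish, I would invoke a census of connected $5$-regular graphs on $12$ vertices (in the spirit of the citations used for $k=3$ and $k=4$), restrict to those which are edge-regular with $\lambda=2$ and of diameter $3$, and verify directly that the only one which is NDB with $\gamma=4$ is the icosahedron. Equivalently, the intersection numbers derived from the analysis above match the distance-regular intersection array $\{5,2,1;1,2,5\}$, which is classically known to be uniquely realised by the icosahedron.

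The main obstacle will be the delicate book-keeping needed to establish $|D^2_2|=2$ and $D^3_3=\emptyset$: one must simultaneously track how the neighbours of $x_2$, $u$, and of prospective vertices in $D^2_2\cup D^3_3$ distribute across $D^1_1$, $D^2_1$, $D^2_2$, $D^2_3$, $D^3_2$, and ensure that every incident edge respects $\gamma=4$. Once $|V(\G)|=12$ is in hand, the final classification step is routine.
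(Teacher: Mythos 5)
Your opening matches the paper exactly: Theorem~\ref{thm:diameter} gives $d=3$ and $\gamma=4$, and Proposition~\ref{k=5:prop1} forces $|D^1_1(x,y)|=2$ for every edge, so $\G$ is edge-regular with $\lambda=2$. From that point on, however, you take a much longer road than necessary, and the road you sketch is not actually paved. The two pivotal claims --- $D^3_3=\emptyset$ and $|D^2_2|=2$ --- are only announced as the outcome of ``a short case analysis'' and ``delicate book-keeping''; neither is carried out, and you yourself flag this as the main obstacle. The final step also has a gap as stated: matching the cardinalities of the sets $D^i_j(x_1,x_0)$ for one edge does not by itself establish distance-regularity, so you cannot directly invoke the uniqueness of the distance-regular graph with intersection array $\{5,2,1;1,2,5\}$; and the alternative of consulting a census of connected $5$-regular graphs on $12$ vertices is not backed by a concrete reference the way the $k=3$ and $k=4$ cases are.

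The paper's proof shows that none of this is needed. Once $\G$ is known to be $5$-regular and edge-regular with $\lambda=2$, the subgraph induced on $\G(x)$ is, for every vertex $x$, a $2$-regular graph on $5$ vertices, hence a $5$-cycle; a connected graph that is locally $C_5$ is the icosahedron by \cite[Proposition~1.1.4]{BCN}. This bypasses the vertex count, the structure of $D^2_2$ and $D^3_3$, and any appeal to a census. If you want to salvage your route you must actually execute the exclusion of $D^3_3$ (the argument that a vertex $z\in D^2_2$ adjacent to $t\in D^3_3$ forces $|W_{z,t}|\ge 5$ is a good start, but you then have to handle the case where all neighbours of $t$ lie in $D^2_3\cup D^3_2\cup D^3_3$) and the determination of $|D^2_2|$, and then replace the intersection-array appeal with an argument that works from the edge-local data you have. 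The locally-$C_5$ observation is the shortcut you missed.
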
 
\begin{proof}
	First recall that by Theorem~\ref{thm:diameter} we have $d=3$, and so $\gamma=4$. We will first show that $\G$ is edge-regular with $\lambda=2$. Pick an arbitrary edge $xy$ and observe that by Proposition~\ref{k=5:prop1} we obtain $|D^1_2(x,y)|=2$, which forces $|D^1_1(x,y)|=2$. This shows that $\G$  is edge-regular with $\lambda=2$. It follows that for every vertex $x$ of $\G$, the subgraph of $\G$ which is induced on $\G(x)$, is isomorphic to the five-cycle $C_5$. By \cite[Proposition~1.1.4]{BCN}, $\G$ is isomorphic to the icosahedron. 
\end{proof}

\medskip \noindent
{\bfseries{\scshape{Proof of Theorem~\ref{thm:main}}.}} It is straightforward  to see that all graphs from Theorem~\ref{thm:main} are regular NDB graphs with $\gamma=d+1$. Assume now that $\G$ is a regular NDB graph with valency $k$, diameter $d$ and $\gamma=d+1$. If $d=2$, then it follows from Remark \ref{rem:d=2} that $\G$ is isomorphic either to the Petersen graph,
the complement of the Petersen graph, the complete multipartite graph $K_{t \times 3}$ with $t$ parts of cardinality $3$ ($t \ge 2$), the M\"obius ladder graph on eight vertices, or the Paley graph on 9 vertices. If $d \ge 3$, then it follows from Theorem~\ref{thm:diameter} that $k \in \{3,4,5\}$. If $k=3$, then $\G$ is isomorphic to the $3$-dimensional hypercube $Q_3$ by Theorem~\ref{thm:k3d3}. If $k=4$ then $\G$ is isomorphic to the line graph of $Q_3$ by Theorem~\ref{k44mm}. If $k=5$, then $\G$ is isomorphic to the icosahedron by Theorem~\ref{k=5:thm1}. \hfill $\Qed$ \\

\section{Acknowledgements}
This work is supported in part by the Slovenian Research Agency (research program P1-0285, research projects N1-0062, J1-9110, J1-1695, N1-0140, N1-0159, J1-2451, N1-0208 and Young Researchers Grant).


\end{document}